\DeclareMathOperator*{\argmin}{arg\,min}
\DeclareMathOperator*{\dom}{dom}
\def\A{{\mathcal A}}
\def\B{{\mathcal B}}
\def\C{{\mathcal C}}
\def\D{{\mathcal D}}
\def\E{{\mathcal E}}
\def\H{{\mathcal H}}
\def\I{{\mathcal I}}
\def\K{{\mathcal K}}
\def\L{{\mathcal L}}
\def\M{{\mathcal M}}
\def\N{{\mathcal N}}
\def\Q{{\mathcal Q}}
\def\S{{\mathcal S}}
\def\T{{\mathcal T}}
\def\X{{\mathcal X}}
\def\Y{{\mathcal Y}}
\def\Z{{\mathcal Z}}
\newtheorem{lemma}{Lemma}[section]
\newtheorem{theorem}{Theorem}[section]
\newtheorem{assumption}{Assumption}[section]
\newtheorem{proposition}{Proposition}[section]
\title {Symmetric Gauss-Seidel Technique Based Alternating Direction Methods of Multipliers for Transform Invariant
Low-Rank Textures Problem}
\author{Yanyun Ding\thanks{Y. Ding is with School of Mathematics and Statistics,
Henan University, Kaifeng 475000, China (Tel: +86-37123881696, Email: dingyanyunhenu@163.com).}
\ and \
Yunhai Xiao\thanks{Y. Xiao is with Institute of Applied Mathematics, School of Mathematics and Statistics,
Henan University, Kaifeng 475000, China (Tel: +86-37123881696, Email: yhxiao@henu.edu.cn, yhxiaomath@gmail.com). The author's work is supported by the Major State Basic Research
Development Program of China (973 Program) (Grant No. 2015CB856003),
and the National Natural Science Foundation of China (Grant No. 11471101).} }
\begin{document}
\maketitle

\begin{abstract}
Transform Invariant Low-Rank Textures, referred to as TILT, can accurately and robustly extract textural or geometric information in a 3D  from user-specified windows in 2D in spite of significant corruptions and warping.
It was discovered that the task can be characterized, both theoretically and
numerically, by solving a sequence of matrix nuclear-norm and $\ell_1$-norm involved
convex minimization problems. For solving this problem, the direct extension of Alternating
Direction Method of Multipliers (ADMM) in an usual Gauss-Seidel manner often performs numerically well in practice but there is no theoretical guarantee on its convergence. In this paper, 
we resolve this dilemma by using the novel symmetric Gauss-Seidel (sGS) based ADMM developed by Li, Sun \& Toh (Math. Prog. 2016). The sGS-ADMM is guaranteed to converge and we shall demonstrate in this paper that it is also practically efficient than the directly extended ADMM.
When the sGS technique is applied to this particular problem, we show that only one variable needs to be re-updated, and this updating hardly imposes any excessive computational cost. The sGS decomposition theorem  of Li, Sun \& Toh (arXiv: 1703.06629) establishes the equivalent between sGS-ADMM and the classical ADMM with an additional semi-proximal term, so the convergence result is followed directly. Extensive experiments illustrate that
the sGS-ADMM and its generalized variant have superior numerical efficiency over the directly extended ADMM.
\end{abstract}

{\bf Key words.} Transform invariant low-rank textures, alternating direction method of multipliers, symmetric Gauss-Seidel, singular value decomposition, optimality conditions.

\setcounter{equation}{0}
\section{Introduction}\label{section1}
\IEEEPARstart{D}{etecting}, identifying, and recognizing feature points or salient regions in images is a very important and fundamental problem in computer vision. These points and regions carry rich and high-level semantic information which are important for image understanding.  Hence, extracting both textural and geometric information accurately may
facilitate many real-world applications such as camera calibration, 3D reconstruction, character recognition, and scene understanding.

Because different points or regions are often used to establish or measure the similarity between different images, it is hoped that the transformation occurred under the changes of viewpoint or illumination has some stability or invariance properties. For these reasons, many so-called invariant features and descriptors for capturing geometrically meaningful structures from various images have been proposed, analyzed, and implemented over the past decades.

Among these methods, the widely used type is the ``scale invariant feature transform" (SIFT) \cite{SIFT,ASIFT}, which is often invariant to the changes in rotation and scale within a limited extent, but it is not truly invariant under projective transforms \cite{TILT}. Unlike conventional techniques, the ``Transform Invariant Low-Rank Textures"  (TILT) \cite{TILT} correctly extracts rich structural and geometric information about the image in 3D scene from its 2D images, and simultaneously produces the global correlations or transformations of those regions in 3D, which are truly invariant of image domain transformations.

We consider a true 2D low-rank texture $X\in\mathbb{R}^{m\times n}$ lies on a planar surface in 3D scene. It is called a low-rank texture if $r\ll\min\{m,n\}$, where
$r\triangleq\text{rank}(X)$. All regular, symmetric patters clearly belong to this class of textures. The image that we observed from a certain viewpoint is actually a transformed version of the original low-rank texture $X$, i.e., $D=X\circ \tau^{-1}$, where $D$ is an observed image (deformed and corrupted) and $\tau:\mathds{R}^2\rightarrow\mathds{R}^2$ is a certain group of transforms, e.g.,  affine transforms, perspective transforms, and general cylindrical transforms \cite{zltwo}. Generally, the transformed texture $D$ might no longer be low rank in such a situation. But beyond that, the textures images are often corrupted by noises and occlusions, or contain some pixels from the surrounding background. Therefore, the following model is more faithful to real this situation
$$
D\circ\tau=X+E,
$$
where $E$ corresponds to the noises or errors.  We assume that, in this paper,
$E$ is a sparse matrix, which means that only a small fraction of the image pixels are grossly corrupted.
Our goal is to recover the exact low-rank texture $X$ and the domain
transformation $\tau$ from the observed image $D$, which naturally leads to the following optimization problem
\begin{equation}\label{modelone}
  \min_{X,E,\tau} \big\{\text{rank}(X)+\lambda\| E\|_{0}, \quad
  \text{s.t.}\quad D\circ\tau=X+E\big\},
\end{equation}
where  $\| E\|_{0}$ denotes the number of non-zero entries in $E$, and $\lambda>0$ is a
weighting parameter that balance the rank of the texture versus the sparsity of the error.
Actually, problem (\ref{modelone}) is combinatorial and known to be NP-hard, and generally computationally intractable. Therefore,
convex relaxations are often used to make the minimization tractable.

The most popular choice is to replace the ``$\text{rank}(\cdot)$" term with the nuclear norm \cite{FPHD}, and replace
the $\ell_{0}$-norm term with the $\ell_{1}$-norm \cite{CANDES11}, which yields the following convex minimization problem to produce an approximate solution
\begin{equation}\label{modeltwo}
  \min_{X,E,\tau} \big\{\|X\|_{*}+\lambda\| E\|_{1},\quad
  \text{s.t.}\quad D\circ\tau=X+E\big\},
\end{equation}
where $\|\cdot\|_*$ is the so-called nuclear norm (also known
as Ky Fan norm) defined by the sum of all singular values, and $\|\cdot\|_1$ is defined as the sum of absolute values of all entries. This model
is also derived from the batch images alignment problem by Pent {\itshape et al.} \cite{PENG12} to seek an optimal set of images domain transformations,
where $X$ represents a batch of aligned images and $E$ models the differences among images.
We must emphasize that although the objective function in model (\ref{modeltwo}) is convex and separable, the nonlinear constraint may cause many difficulties to minimize.
As mentioned in \cite{PENG12,TILT} that, a common technique to overcome this difficulty is to linearize the nonlinear term $D\circ\tau$  at the current estimation $\tau^{(i)}$ as
$D\circ(\tau^{(i)}+\Delta\tau)\approx D\circ\tau^{(i)}+J\Delta\tau$, and then compute the increment $\Delta\tau$ via solving a sequence of three-block convex minimization problem with form
\begin{equation}\label{modelthree}
    \min_{X,E,\Delta\tau} \big\{\| X\|_{*}+\lambda\|E\|_{1},\quad
  \text{s.t.}\quad D\circ\tau^{(i)}+J\Delta\tau=X+E\big\},
\end{equation}
where $J$ is the Jacobian of the image with respect to the transform parameters $\tau^{(i)}$  defined as
\begin{equation}\label{computJ}
J=\frac{\partial}{\partial \zeta}\Big (\frac{\text{vec}(D\circ\zeta)}{\|\text{vec}(D\circ\zeta)\|_2}\Big)\Big |_{\zeta=\tau^{(i)}},
\end{equation}
where ``$\text{vec}(\cdot)$" is used to stack a matrix column-by-column sequentially as a vector. When the increment $\Delta\tau$ is attained, the transform is immediately updated as
$\tau^{(i+1)}=\tau^{(i)}+\Delta\tau$. It is important to assume that $D\circ\tau^{(i)}$ does not belong to the rang space of $J$. Otherwise, problem (\ref{modelthree}) only admits zero solutions. 
The model (\ref{modelthree})  has separable
structure in terms of both the objective function and the constraint, and thus, it falls into the
framework of the alternating direction method of multipliers (ADMM).
Zhang {\itshape et al.} \cite{TILT} implemented the directly extended ADMM  and illustrated its practical performance. Nevertheless,
the directly extended ADMM is divergent for multi-block convex minimization problems, so its convergence can not be theoretically guaranteed \cite{CHENMP16}. Because of this, Ren \& Lin \cite{LIN13} reformulated problem (\ref{modelthree}) as a two-block convex minimization and solved immediately by a linearized ADMM with an adaptive penalty parameter updating technique.
In this paper, we also focus the application of  ADMM on the three-variable involved convex minimization (\ref{modelthree}) since this method has been widely and successfully used in the field of image processing, such as\cite{XIAOJMIV,XIAOIPI,YANGSIAMSC}.
However, unlike the aforementioned approaches, we employ a symmetric Gauss-Seidel (sGS) based ADMM developed by Li, Sun \& Toh \cite{XDLIMP} to sweep one of the variables just one more time. Due to the simple closed-form solutions are admitted for subproblems, this technique imposes almost no excessive computational burdens. The advantage of using the sGS technique is that it decomposes a large problem into several smaller parts and then solves it correspondingly via its favorable structure. The technique has been widely and successfully used to solve many multi-block conic programming problems over the past few years, such as \cite{CHENL,XDLIMP,LISGS,YANGSIAM}.
We show that the sGS decomposition theorem in \cite{SGSTH} can be used to establish the equivalence between the sGS-ADMM and the semi-proximal ADMM with a specially designed semi-proximal term which, allows the desired convergence to be directly derived from the convergence result of Fazel et al. \cite{SEMP13}.

The remaining parts of this paper is organized as follows. Section \ref{section2} contains two subsections. Subsection \ref{sub1} reviews some basic definitions and facts in convex analysis. Subsection \ref{sub2} reviews some typical ADMMs and the convergence results for our subsequent developments.
In Section \ref{section3}, we apply the sGS-ADMM to solve (\ref{modelthree}) and list its convergence result immediately. In Section \ref{theory}, we present an sGS based generalized ADMM method. In Section \ref{numersec}, we provide computational experiments to show the algorithms' practical performance. And finally we conclude the paper in
Section \ref{last}.

\section{Preliminaries}\label{section2}
\setcounter{equation}{0}
In this section, we provide some basic concepts and give a quick review
of a couple of semi-proximal ADMM which will be used in the subsequent developments.
\subsection{Basic concepts}\label{sub1}
Let $\E$ be finite dimensional real Euclidean space endowed with an inner product $\langle \cdot,\cdot \rangle $ and its induced norm $\|\cdot\|$, respectively.
A subset $\C$ of $\E$ is said to be convex if $(1-\lambda)x+\lambda y\in\C$ whenever $x\in\C$, $y\in\C$, and $0\leq\lambda\leq 1$. The relative interior of $C$, which we denote by $ri(\C)$, is defined as the interior which results when $\C$ is regarded as a subset of its affine hull. For any $z\in\E$, the symbol $\Pi_{\C}(z)$ denotes the metric projection of $z$ onto $\C$, which is the optimal solution of the minimization problem $\min_y\{ \|y-z\| | y\in\C\}$. A subset $\K$ of $\E$ is called a cone if it is closed under positive scalar multiplication, i.e., $\lambda x\in\K$ when $x\in\K$ and $\lambda>0$ \cite{RR}. The normal cone of $\K$ at point $x\in\K$ is defined by
$\N_\K(x)=\{y\in\E|\langle y, z-x\rangle\leq 0, \ \forall z\in\K\}$. 

Let $f:\E\rightarrow(-\infty,+\infty]$ be a closed proper convex function. The effective domain of $f$, which we denote by $\text{dom}(f)$, is defined
as $\text{dom}(f)=\{x| f(x)<+\infty\}$. A vector $x^*$ is said
to be a subgradient of $f$ at point $x$ if $f(z)\geq f(x)+\langle x^*,z-x\rangle$ for all $z\in\E$. The set of all subgradients of $f$ at $x$ is called the subdifferential of $f$ at $x$
and is denoted by $\partial f(x)$. Obviously, $\partial f(x)$ is a closed convex set while it is not empty. The multivalued operator $\partial f:x\rightrightarrows \partial f(x)$ is shown to be maximal monotone \cite[Corollary 31.5.2]{RR}, i.e., for any $x,y\in\E$ such that $\partial f(x)$ and $\partial f(y)$ are not empty, it holds that $\langle x-y, u-v\rangle\geq 0$ for all $u\in\partial f(x)$ and $v\in\partial f(y)$.
The Moreau-Yosida regularization of $f$ at $x\in\E$ with positive scalar $\beta>0$ is defined by
\begin{equation}\label{MY}
\varphi_f^\beta(x):=\min_{y \in \E}\Big\{f(y)+\frac{1}{2\beta}\|y-x\|^2\Big\}.
\end{equation}
For any $x \in \E$, problem (\ref{MY}) has a unique optimal solution, which is well
known as the proximal point of $x$ associated with $f$, i.e.,
\begin{align}\label{pi}
P_{f}^{\beta}(x):=\argmin_{y \in \E}\Big\{f(y)+\frac{1}{2\beta}\|y-x\|^2\Big\}.
\end{align}
The following propositions server as important building blocks in the subsequent developments:

\begin{proposition}\label{prop1}
(\cite[Theorem 2.1]{JFCAI}) Given $X\in\mathbb{R}^{m\times n}$ of rank $r$,
let
$$
X=U\Sigma V^\top,\quad\text{and}\quad\Sigma=\text{diag}(\{\sigma_i\}_{1\leq i\leq
r}),
$$
be the singular value decomposition (SVD) of $X$. For each $\mu>0$, it is shown that the proximal point of $X$ defined as
\begin{equation}\label{nclear}
\mathcal{D}_{\mu}(X)=\argmin_Y \Big\{\|Y\|_*+\frac{1}{2\mu}\|Y-X\|_F^2\Big\}
\end{equation}
can be characterized as follows
$$
\mathcal{D}_{\mu}(X)=U\Sigma_{\mu}V^\top\quad and \quad
\Sigma_\mu=\text{diag}(\{\sigma_i-\mu\}_+),
$$
where $\{\cdot\}_+=\max\{0,\cdot\}$.
\end{proposition}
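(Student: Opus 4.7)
The plan is to use the first-order optimality condition for the strongly convex problem (\ref{nclear}) together with the well-known characterization of the subdifferential of the nuclear norm. Since the quadratic term $\frac{1}{2\mu}\|Y-X\|_F^2$ makes the objective strictly convex and coercive, a minimizer $Y^*$ exists and is unique, so it suffices to exhibit one matrix satisfying the necessary and sufficient optimality condition $X - Y^* \in \mu\, \partial\|Y^*\|_*$.

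First I would recall the standard formula for the subdifferential of the nuclear norm: if $Y^*$ has reduced singular value decomposition $Y^* = \widetilde{U}\widetilde{\Sigma}\widetilde{V}^\top$ with $\widetilde{U}^\top\widetilde{U} = I$, $\widetilde{V}^\top\widetilde{V} = I$, then
\begin{equation*}
\partial\|Y^*\|_* = \bigl\{\widetilde{U}\widetilde{V}^\top + W \ \big|\ \widetilde{U}^\top W = 0,\ W\widetilde{V} = 0,\ \|W\|_2 \leq 1\bigr\},
\end{equation*}
where $\|\cdot\|_2$ denotes the spectral norm. Next, I would plug in the candidate solution $Y^* = U\Sigma_\mu V^\top$. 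Letting $r_0$ be the number of indices with $\sigma_i > \mu$ and splitting $U = [U_0\ U_1]$, $V = [V_0\ V_1]$ accordingly, the reduced SVD of $Y^*$ is $U_0\,\mathrm{diag}(\sigma_i - \mu)_{i\leq r_0}\,V_0^\top$.

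The key computation is then
\begin{equation*}
X - Y^* = U(\Sigma - \Sigma_\mu)V^\top = \mu U_0 V_0^\top + U_1\,\mathrm{diag}(\sigma_i)_{i>r_0}\,V_1^\top,
\end{equation*}
because $\sigma_i - (\sigma_i-\mu)_+ = \min(\sigma_i,\mu)$ equals $\mu$ on the first block and $\sigma_i$ on the second. Dividing by $\mu$, I would set $W := U_1\,\mathrm{diag}(\sigma_i/\mu)_{i>r_0}\,V_1^\top$ and verify the three conditions: $U_0^\top W = 0$ and $W V_0 = 0$ follow from the orthogonality of the columns of $U$ and $V$, while $\|W\|_2 = \max_{i>r_0}\sigma_i/\mu \leq 1$ since $\sigma_i \leq \mu$ for $i>r_0$. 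Consequently $X - Y^* = \mu(U_0V_0^\top + W) \in \mu\,\partial\|Y^*\|_*$, which closes the argument.

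The only delicate point is correctly using the block-structured characterization of $\partial\|\cdot\|_*$, in particular ensuring that the residual part $W$ lies in the orthogonal complements spanned by $U_1$ and $V_1$ and has spectral norm bounded by one. Once the orthogonal partition of $U$ and $V$ is set up, each of the three required properties of $W$ reduces to a direct identification, so no further technical obstacle arises.
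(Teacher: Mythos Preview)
Your argument is correct: the objective is strictly convex, the optimality condition $X-Y^*\in\mu\,\partial\|Y^*\|_*$ is necessary and sufficient, and your verification that the candidate $U\Sigma_\mu V^\top$ satisfies it via the standard subdifferential formula for $\|\cdot\|_*$ is clean and complete. Note that the paper does not actually supply a proof of this proposition; it simply quotes the result from Cai, Cand\`es and Shen \cite{JFCAI}, and the argument you wrote is essentially the one given there.
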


\begin{proposition}\label{prop2}
Let $X\in\mathbb{R}^{m\times n}$ be a given matrix. For each $\mu>0$, the proximal point of $X$ is defined as
$$
\mathcal{S}_\mu(X)=\argmin_Y \|Y\|_{1}+\frac{1}{2\mu}\|X-Y\|_F^2.
$$
It is shown that the $(i,j)$-entry of $\mathcal{S}_\mu(X)$  can be characterized as follows
$$
[\mathcal{S}_\mu(X)]_{i,j}=\mbox{sgn} (X_{i,j}) \cdot\Big\{|X_{i,j}|-\mu\Big\}_+,
$$
where ``$\text{sgn}$" is sign function.
\end{proposition}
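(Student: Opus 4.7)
The plan is to exploit the complete separability of the objective function and reduce the matrix minimization to $mn$ independent scalar problems, each solvable in closed form via subdifferential calculus.

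First, I would observe that both $\|Y\|_1=\sum_{i,j}|Y_{i,j}|$ and $\|X-Y\|_F^2=\sum_{i,j}(X_{i,j}-Y_{i,j})^2$ decompose entrywise, so the objective is a sum of scalar terms of the form $|y|+\frac{1}{2\mu}(y-x)^2$ with $y=Y_{i,j}$ and $x=X_{i,j}$. Since the objective is strictly convex and coercive, it admits a unique minimizer, and that minimizer can be computed one coordinate at a time. The problem therefore reduces to the one-dimensional minimization $\min_{y\in\mathbb{R}}\{|y|+\frac{1}{2\mu}(y-x)^2\}$ for each fixed scalar $x\in\mathbb{R}$.

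Second, I would apply the Fermat-type optimality condition $0\in\partial|y^*|+\frac{1}{\mu}(y^*-x)$, or equivalently $x\in y^*+\mu\,\partial|y^*|$, and split into three cases according to the sign of $y^*$. If $y^*>0$, then $\partial|y^*|=\{1\}$ forces $y^*=x-\mu$, which is consistent with $y^*>0$ only when $x>\mu$. If $y^*<0$, then $\partial|y^*|=\{-1\}$ forces $y^*=x+\mu$, consistent only when $x<-\mu$. If $y^*=0$, then $\partial|y^*|=[-1,1]$, and the inclusion becomes $x/\mu\in[-1,1]$, i.e., $|x|\le\mu$. Collecting the three cases yields the unified formula $y^*=\mathrm{sgn}(x)\cdot\{|x|-\mu\}_+$, which, applied entry-by-entry, gives precisely the claimed characterization of $\mathcal{S}_\mu(X)$.

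There is no genuine obstacle in this argument; it is the standard derivation of the soft-thresholding operator. The only point requiring a small verification is that the three sign cases cover the real line disjointly and that the resulting candidate $y^*$ is consistent with the sign assumption in each case, which is evident from the ranges $x>\mu$, $x<-\mu$, and $|x|\le\mu$. Strict convexity of the scalar objective then certifies that the computed $y^*$ is the unique minimizer, so separability lifts it to the claimed unique entrywise minimizer of the matrix problem.
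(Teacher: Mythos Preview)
Your argument is correct and is the standard derivation of the soft-thresholding operator. Note that the paper does not actually supply a proof of this proposition; it is stated without proof as a well-known building block, so there is no ``paper's own proof'' to compare against, and your entrywise-separability-plus-subdifferential argument is exactly the expected justification.
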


\subsection{Classical and Generalized Semi-proximal ADMM}\label{sub2}
Let $\X$, $\Y$, and $\Z$ be finite dimensional real Euclidian spaces. Consider the convex optimization problem with the following two-block separable structure
\begin{equation}\label{tss}
\begin{array}{ll}
\min\limits_{y,z} & f(y)+g(z) \\
\text{s.t.} & \mathcal{A}^*y+\mathcal{B}^*z=c,
\end{array}
\end{equation}
where $f : \Y \rightarrow (-\infty,+\infty]$ and $g : \Z \rightarrow (-\infty,+\infty]$ are closed proper convex functions, $\A: \X \rightarrow \Y$ and $\B : \X \rightarrow \Z$ are given linear maps, and $c \in \mathcal{X}$ is given data. The dual of problem (\ref{tss}) is given by
\begin{equation}\label{dss}
\max_{x} \Big\{f^*(-\A x)+g^*(-\B x)+\langle c,x \rangle\Big\}.
\end{equation}
The Karush-Kuhn-Tucker (KKT) system of problem (\ref{tss}) is given by
$$
0\in \A x+\partial f(y), \quad 0\in \B x+\partial g(z),\quad\mbox{and} \quad \A^*y+\B^*z=c.
$$

The augmented Lagrangian function associated with (\ref{tss}) is given by
$$
\L_{\sigma}(y,z;x)=f(y)+g(z)+\langle x,\mathcal{A}^*y+\mathcal{B}^*z-c\rangle+\frac{\sigma}{2}
\|\mathcal{A}^*y+\mathcal{B}^*z-c\|^2,
$$
where $x\in\X$ is a multiplier, and $\sigma >0$ be a given penalty parameter. Staring from an initial point $(x^{0},y^{0},z^{0})\in\X\times(\dom\, f)\times(\dom\, g)$, the iterations of the semi-proximal ADMM of Fazel, Pong, Sun \& Tseng \cite{SEMP13} for solving (\ref{tss}) is summarized as
\begin{equation}\label{sadmm}
\left\{
\begin{array}{ll}
y^{k+1}&=\argmin_{y}\big\{\L_{\sigma}(y,z^{k};x^{k})+\frac{\sigma}{2}\|y-y^k\|_{\T_f}^2\big\},
\\[2mm]
z^{k+1}&=\argmin_{z}\big\{\L_{\sigma}(y^{k+1},z;x^{k})+\frac{\sigma}{2}\|z-z^k\|_{\T_g}^2\big\},
\\[2mm]
x^{k+1}&=x^{k}+\xi\sigma\big(\A^{*}y^{k+1}+\B^{*}z^{k+1}-c\big),
\end{array}
\right.
\end{equation}
where $\T_f$ and $\T_g$ are positive semi-definite and the step-length $\xi$ is chosen in the interval $(0,(1+\sqrt{5})/2)$. On the one hand, when $\T_f=0$ and $\T_g=0$, the semi-proximal ADMM (\ref{sadmm}) reduces to the classical ADMM introduced by Glowinski \& Marroco \cite{GLOWINSKI75} and Gabay \& Mericire \cite{GABAY76} in the mid-1970s. On the other hand, when $\T_f=\alpha \I$ and $\T_g=\beta\I$ with positive scalars $\alpha>0$ and $\beta>0$, the iterative scheme (\ref{sadmm}) comes down to proximal ADMM presented by Eckstein \cite{ECKSTEINOMS} in 1990s.  The following theorem  is selected from the convergence Theorem B.1 in \cite{SEMP13}.
For more details, one can refer to \cite{SEMP13} and the references therein.

\begin{assumption}\label{a1}
There exists $(\bar{y}, \bar{z})\in ri(\dom (f) \times \dom (g))$ such that $\A^*\bar{y}+\B^*\bar{z}=c$.
\end{assumption}
\begin{theorem}\label{sunth}
(\cite[Theorem B.1]{SEMP13}) Suppose that the solution set of problem (\ref{tss})
is nonempty and that Assumption \ref{a1} holds. Let the sequence $\{(y^k, z^k ; x^k )\}$ be generated by iterative scheme \eqref{sadmm} from an initial point $(y^0,z^0;x^0)$.
Then, under the conditions that $\xi \in (0,(1+\sqrt{5})/2)$ and $\mathcal{T}_f$
and $\mathcal{T}_g$ be positive semi-definite, the sequence $\{(y^k,z^k;x^k)\}$ converges to a
unique limit $(\bar{y},\bar{z};\bar{x})$ with $(\bar{y},\bar{z})$ solving problem (\ref{tss}).
\end{theorem}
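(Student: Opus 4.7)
The plan is to prove this via a standard Lyapunov (Fejér-type) analysis built around the KKT residuals of (\ref{tss}). By Assumption \ref{a1}, strong duality holds and a KKT triple $(\bar y,\bar z;\bar x)$ exists, so $-\A\bar x\in\partial f(\bar y)$, $-\B\bar x\in\partial g(\bar z)$, and $\A^*\bar y+\B^*\bar z=c$. Writing $e_y^k=y^k-\bar y$, $e_z^k=z^k-\bar z$, $e_x^k=x^k-\bar x$, and $r^{k}=\A^*y^k+\B^*z^k-c$, I would first extract first-order optimality conditions from the two subproblems in (\ref{sadmm}): the $y$-subproblem gives $-\A\bigl(x^k+\sigma(\A^*y^{k+1}+\B^*z^k-c)\bigr)-\sigma\T_f(y^{k+1}-y^k)\in\partial f(y^{k+1})$, and the $z$-subproblem gives an analogous inclusion for $\partial g(z^{k+1})$ that already involves $z^{k+1}$ in the linear term (this Gauss–Seidel asymmetry is the reason $\B^*(z^{k+1}-z^k)$ will appear in the Lyapunov function).

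Next, I would pair each inclusion with the corresponding KKT inclusion at $\bar y,\bar z$ and invoke maximal monotonicity of $\partial f,\partial g$ to obtain two inequalities of the form $\langle e_y^{k+1},\cdot\rangle\ge 0$ and $\langle e_z^{k+1},\cdot\rangle\ge 0$. Adding these, and substituting the multiplier recursion rewritten as $e_x^{k+1}=e_x^k+\xi\sigma r^{k+1}$, I would reorganize the cross terms into a descent inequality
\begin{equation*}
\Phi_k-\Phi_{k+1}\;\ge\;\eta\,\sigma\|r^{k+1}\|^{2}+\sigma\|\B^{*}(z^{k+1}-z^{k})\|^{2}+\|y^{k+1}-y^{k}\|_{\T_f}^{2}+\|z^{k+1}-z^{k}\|_{\T_g}^{2}
\end{equation*}
for the Lyapunov function
\begin{equation*}
\Phi_k\;=\;\tfrac{1}{\xi\sigma}\|e_x^k\|^{2}+\sigma\|\B^{*}e_z^k\|^{2}+\|e_y^k\|_{\T_f}^{2}+\|e_z^k\|_{\T_g}^{2}+\max(0,1-\xi)\,\sigma\|r^k\|^{2}.
\end{equation*}

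The hard part will be verifying that the constant $\eta$ is strictly positive for the entire overrelaxation range $\xi\in(0,(1+\sqrt5)/2)$. This is a quadratic-form calculation: expanding $\|e_x^{k+1}\|^{2}=\|e_x^k\|^{2}+2\xi\sigma\langle e_x^k,r^{k+1}\rangle+\xi^{2}\sigma^{2}\|r^{k+1}\|^{2}$ and recombining it with $\|B^*(z^{k+1}-z^k)\|^{2}$ terms produces a $2\times 2$ quadratic in $(\|r^{k+1}\|,\|\B^{*}(z^{k+1}-z^k)\|)$ whose positive semidefiniteness forces the golden-ratio bound on $\xi$; this is where the classical Fortin–Glowinski threshold enters. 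Once $\eta>0$ is secured, telescoping the descent inequality yields summability of $\|r^{k+1}\|^{2}$, $\|\B^{*}(z^{k+1}-z^k)\|^{2}$, $\|y^{k+1}-y^{k}\|_{\T_f}^{2}$, and $\|z^{k+1}-z^{k}\|_{\T_g}^{2}$, and boundedness of $\{\Phi_k\}$ gives boundedness of $(e_x^k,\B^{*}e_z^k)$ together with the seminorms of $e_y^k,e_z^k$. A cluster point combined with the closed-graph property of $\partial f,\partial g$ produces a KKT triple, and the Fejér-type decrease of $\Phi_k$ with respect to any KKT point upgrades subsequential convergence to convergence of the whole sequence to a unique limit $(\bar y,\bar z;\bar x)$ solving (\ref{tss}).
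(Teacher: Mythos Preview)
The paper does not supply a proof of this theorem: it is stated as a direct citation of \cite[Theorem B.1]{SEMP13} and is invoked only as a black box to certify convergence of the sGS-ADMM scheme once Lemma~\ref{propositon1} identifies the semi-proximal term $\T$. So there is nothing in the paper to compare your argument against, and your Fejér/Lyapunov outline is indeed the route taken in the cited reference.

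That said, your sketch has a genuine gap at the boundedness step. From the monotone decrease of $\Phi_k$ you can conclude that $\{e_x^k\}$, $\{\B^*e_z^k\}$, $\{\|e_y^k\|_{\T_f}\}$ and $\{\|e_z^k\|_{\T_g}\}$ are bounded, but under the hypotheses as stated---$\T_f,\T_g$ merely positive \emph{semi}-definite, with no injectivity assumption on $\A^*$ or $\B^*$---this does not force $\{y^k\}$ or $\{z^k\}$ themselves to be bounded, and hence does not guarantee existence of a cluster point. In \cite{SEMP13} this is handled by the additional standing assumption that $\T_f+\sigma\A\A^*$ and $\T_g+\sigma\B\B^*$ are positive definite (so that the subproblems have unique minimizers and the seminorms in $\Phi_k$ control the full iterates); the paper here suppresses that hypothesis when restating the theorem. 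Your argument should either invoke that condition explicitly or replace the final ``cluster point $+$ Fejér monotonicity'' step with an argument that works at the level of the seminorms only. Everything else---the extraction of the optimality inclusions, the monotonicity pairing, the golden-ratio calculation on $\xi$, and the closed-graph passage to the limit---is correctly outlined.
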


Next, we quickly review another type of ADMM. In order to broadening the capability of the semi-proximal ADMM (\ref{sadmm}) at the special case $\xi=1$,  Xiao, Chen \& Li \cite{XIAOMPC} introduced the following generalized semi-proximal ADMM with initial point $\tilde w^{0}=(\tilde x^{0},\tilde y^{0},\tilde z^{0})\in\X\times(\dom\, f)\times(\dom\, g)$:
\begin{equation}\label{gadmm-gu}
\left\{
\begin{array}{rl}
z^{k+1}&\displaystyle=\argmin_{z}
\big\{\L_{\sigma}(\tilde y^{k},z; \tilde x^{k})+\frac{\sigma}{2}\|z-\tilde{z}^k\|_{\T_g}^{2}\big\},
\\[2mm]
x^{k+1}&\displaystyle=\tilde x^{k}+\sigma(\A^{*} \tilde y^{k}+B^{*} z^{k+1}-c),
\\[2mm]
y^{k+1}&\displaystyle=\argmin_{y}\big\{\L_{\sigma}(y,z^{k+1}; x^{k+1})+\frac{\sigma}{2}\|y-\tilde{y}_k\|_{\T_f}^2\big\},
\\[2mm]
\tilde w^{k+1}&\displaystyle=\tilde w^{k}+\rho(w^{k+1}-\tilde w^{k}),
\end{array}\right.
\end{equation}
where $\rho\in(0,2)$ is a relaxation factor and $w^k=(x^k,y^k,z^k)$. For $\rho=1$, the above generalized ADMM scheme is exactly the classical ADMM scheme \eqref{sadmm} with $\xi=1$. When $\T_f=0$ and $\T_g=0$, the iteration (\ref{gadmm-gu}) is actually the generalized ADMM developed by Eckstein \& Bertsekas \cite{ECKSTEIN92}. For details on this equivalence, one can refer to  Chen's Ph.D. thesis \cite[Section 3.2]{CHENPHD}.

From Theorem B.1 in \cite{SEMP13} and Theorem 5.1 in \cite{XIAOMPC}, the convergence result of corresponding algorithm based on the scheme (\ref{gadmm-gu}) under Assumption \ref{a1} can be stated as follows:
\begin{theorem}\label{theorem}
(\cite[Theorem B.1]{SEMP13}, \cite[Theorem 5.1]{XIAOMPC}) Suppose that the solution set of problem (\ref{tss})
is nonempty and that Assumption \ref{a1} holds. Let the sequence $\{(y^k, z^k ; x^k )\}$ be generated by iterative scheme (\ref{gadmm-gu}) from an initial point $(\tilde x^{0},\tilde y^{0},\tilde z^{0})$. Then, under the conditions that $\rho\in (0,2)$ and that $\mathcal{T}_f$
and $\mathcal{T}_g$ be positive semi-definite, the sequence $\{(y^k,z^k;x^k)\}$ converges to a
unique limit $(\bar{y},\bar{z};\bar{x})$ with $(\bar{y},\bar{z})$ solving problem (\ref{tss}).
\end{theorem}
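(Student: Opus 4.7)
The plan is to adapt the standard convergence analyses of Fazel, Pong, Sun \& Tseng and Xiao, Chen \& Li, by constructing a Lyapunov function that measures closeness to a KKT point of (\ref{tss}) and proving a Fej\'er-type descent property along the iterations of (\ref{gadmm-gu}). First, I would invoke the existence of a solution and Assumption \ref{a1} (relative interior constraint qualification) to pick any KKT triple $(\bar y, \bar z, \bar x)$ satisfying $0 \in \A\bar x + \partial f(\bar y)$, $0 \in \B\bar x + \partial g(\bar z)$, and $\A^*\bar y + \B^*\bar z = c$. Then I would write the first-order optimality conditions of the $z$- and $y$-subproblems in (\ref{gadmm-gu}); after setting $\hat x^{k+1} := x^{k+1} + \sigma(\A^* y^{k+1} + \B^* z^{k+1} - c)$, these read
\begin{equation*}
-\B x^{k+1} - \sigma \T_g(z^{k+1} - \tilde z^k) \in \partial g(z^{k+1}),\qquad
-\A \hat x^{k+1} - \sigma \T_f(y^{k+1} - \tilde y^k) \in \partial f(y^{k+1}).
\end{equation*}

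Subtracting the KKT inclusions and invoking maximal monotonicity of $\partial f$ and $\partial g$ produces two inner-product inequalities in terms of the error variables $y^{k+1}-\bar y$, $z^{k+1}-\bar z$, and $x^{k+1}-\bar x$, which I would combine using the identity $x^{k+1} - \tilde x^k = \sigma(\A^*\tilde y^k + \B^* z^{k+1} - c)$ and the relaxation $\tilde w^{k+1} = \tilde w^k + \rho (w^{k+1}-\tilde w^k)$. The goal is to obtain an inequality of the form
\begin{equation*}
\Phi(\tilde w^{k+1}; \bar w) \;\le\; \Phi(\tilde w^k;\bar w) \;-\; \rho(2-\rho)\, R_k - S_k,
\end{equation*}
where $\Phi$ is a weighted squared-distance function incorporating $\sigma^{-1}\|\tilde x^k-\bar x\|^2$, $\|\A^*(\tilde y^k - \bar y)\|^2$, $\|\tilde y^k - \bar y\|_{\T_f}^2$, $\|\tilde z^k - \bar z\|_{\T_g}^2$, and analogous cross terms, while $R_k, S_k \ge 0$ contain the primal residual and the successive differences of the iterates weighted by $\T_f, \T_g$. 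The range $\rho\in(0,2)$ is precisely what makes the coefficient $\rho(2-\rho)$ strictly positive.

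From this Fej\'er-type inequality, $\Phi(\tilde w^k;\bar w)$ is nonincreasing, so $\{\tilde w^k\}$ is bounded and the summability of $R_k, S_k$ forces $\A^* y^{k+1}+\B^* z^{k+1} - c \to 0$ together with $\T_f^{1/2}(y^{k+1}-\tilde y^k) \to 0$ and $\T_g^{1/2}(z^{k+1}-\tilde z^k)\to 0$. A standard extraction argument using the closedness of $\partial f$ and $\partial g$ then identifies every cluster point as a KKT point, and Opial's lemma applied in the norm induced by $\Phi$ upgrades subsequential convergence to convergence of the whole sequence to a single limit $(\bar y, \bar z, \bar x)$.

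The main obstacle is the bookkeeping in passing from the two subdifferential inequalities to the clean descent inequality for $\Phi$: because $\T_f, \T_g$ are only positive semidefinite, one cannot directly control $\|y^{k+1}-\bar y\|$ or $\|z^{k+1}-\bar z\|$ in the merit function, so the merit function must be designed so that the missing coercivity is supplied through the maps $\A, \B$ and the dual variable $x^{k+1}$. The second subtle point is the three-body interaction among the relaxed update of $\tilde w$, the ``intermediate'' dual step that defines $x^{k+1}$ before $y^{k+1}$ is computed, and the proximal corrections --- this is exactly where the Eckstein--Bertsekas equivalence between generalized ADMM and Douglas--Rachford splitting used in \cite{XIAOMPC} becomes the decisive tool, letting the relaxation factor $\rho\in(0,2)$ enter as the standard Douglas--Rachford relaxation parameter rather than through a direct ad hoc estimate.
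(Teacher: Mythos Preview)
The paper does not actually prove this theorem: it merely quotes the result from \cite[Theorem~B.1]{SEMP13} and \cite[Theorem~5.1]{XIAOMPC} and moves on. Your proposal, by contrast, sketches the underlying argument --- Fej\'er monotonicity of a weighted distance to a KKT point, summability of residuals, and Opial-type identification of the limit --- which is precisely the machinery those cited references use. So your approach is consistent with the literature the paper relies on; it simply supplies the proof that the paper itself omits by citation.
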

\section{Applying sGS-ADMM on problem (\ref{modelthree})}\label{section3}
\setcounter{equation}{0}
In this section, we quickly review the direct extend ADMM of Zhang {\itshape et al.} \cite{TILT}, and  show the applications of sGS-ADMM subsequently.
In the following, for simplicity, we may omit the superscripts $i+1$ or $i$ of variables.  This should not cause
ambiguity by referring to the context.

The Lagrangian function of (\ref{modelthree}) is given by
\begin{equation}\label{lagf}
\begin{array}{ll}
\L(X, E, \Delta\tau; Y)=&\|X\|_{*}+\lambda\| E\|_{1} \\[2mm]
&+\big\langle Y, D\circ\tau+J\Delta\tau-X-E\big\rangle,
\end{array}
\end{equation}
where $Y$ is a multiplier and $\langle \cdot, \cdot\rangle$ is the standard trace inner product. Then the dual of
(\ref{modelthree}) takes the following form
\begin{equation}\label{dualprob}
\max_{Y}\Big\{\langle Y, D\circ\tau\rangle:J^*Y=0, \ \| Y\|\leq 1, \ \big\|Y\big\|_\infty\leq\lambda \Big\},
\end{equation}
where $J^*$ is an adjoint operator, transpose in the matric case, of operator $J$; $\|\cdot\|$ is the so-called spectral norm which depends on
the largest singular value of a matrix; $\|\cdot\|_\infty$ is $\infty$-norm that is defined as
the maximum entries¡¯ magnitude of a matrix. Denote $\B_1=\{Y \ | \ \|Y\|\leq 1\}$ and $\B_2=\{Y \ | \ \|Y\|_\infty\leq\lambda\}$. The model (\ref{dualprob})
can be equivalently reformulated as
\begin{equation}\label{dualprob2}
\min_{Y}\Big\{-\langle Y, D\circ\tau\rangle:J^*Y=0, \ Y\in\B_1, Y\in\B_2 \Big\}.
\end{equation}
We say that $\bar{Y}$ is the Lagrangian multiplier of (\ref{dualprob2}) at point $(\bar{X},\Delta\bar{\tau},\bar{E})$, if it
satisfies the KKT condition:
\begin{equation}\label{kkt}
\left\{
\begin{array}{l}
-D\circ\tau-J\Delta\tau+X+E=0,\\[1mm]
J^*Y=0,\\[1mm]
0\in-X+\N_{\B_1}(Y),\\[1mm]
0\in-E+\N_{\B_2}(Y),
\end{array}
\right.
\end{equation}
where $\N_{\B_1}(Y)$ (resp. $\N_{\B_1}(Y)$) is the normal cone to $\B_1$ (resp. $\B_2$) at $Y\in\B_1$ (resp. $Y\in\B_1$).

The augmented Lagrangian function associated with (\ref{modelthree}) is defined by:
\begin{equation}\label{algf}
\begin{array}{ll}
   &\L_{\sigma}(X, E, \Delta\tau; Y)\\[2mm]
=&\|X\|_{*}+\lambda\| E\|_{1}+\big\langle Y, D\circ\tau+J\Delta\tau-X-E\big\rangle\\[2mm]
   &+\frac{\sigma}{2}\| D\circ\tau+J\Delta\tau-X-E\|^{2}_{F},
\end{array}
\end{equation}
where  $\|\cdot\|_{F}$
is the Frobenius norm, and $\sigma>0$ is a penalty parameter.
The directly extended ADMM implemented by Zhang {\itshape et al.} \cite{TILT} minimizes $ \L_{\sigma}(X, E, \Delta\tau; Y)$ firstly
with respect to $X$, later with $E$, and then with $\Delta\tau$ by fixing
other variables with their latest values. More precisely, with the
given $(X^k,E^k,\Delta\tau^k;Y^k)$, it generates the new iterate
$(X^{k+1},E^{k+1},\Delta\tau^{k+1};Y^{k+1})$ via the iterative scheme:
\begin{equation}\label{admmtilt}
\left\{
\begin{array}{lll}
  X^{k+1}&=&\argmin_{X} \L_\sigma(X, E^{k}, \Delta\tau^{k}; Y^{k}),\\[2mm]
  \Delta\tau^{k+1} &=&\argmin_{\Delta\tau} \L_\sigma(X^{k+1}, E^{k}, \Delta\tau; Y^{k}),\\[2mm]
  E^{k+1} &=&\argmin_{E} \L_\sigma(X^{k+1}, E, \Delta\tau^{k+1}; Y^{k}),\\[2mm]
  Y^{k+1}&=&Y^{k}+\xi\sigma(D\circ\tau+J\Delta\tau^{k+1}-X^{k+1}-E^{k+1}).
\end{array}
\right.
\end{equation}
Although each step of the above iteration involves solving a convex minimization problem, it was shown in \cite{TILT} that a simple
closed-form solution is permitted for each subproblem. For the completeness of this paper, we re-present the derivations for each step of (\ref{admmtilt}) by using the
new notations reported in Propositions \ref{prop1} and \ref{prop2}. Firstly, we can get for every $k=0,1,\ldots$ that
\begin{align*}
X^{k+1}&=\argmin_{X} \L_{\sigma}(X, E^{k}, \Delta\tau^{k}; Y^{k})\\
       &=\argmin_X \Big\{\|X\|_{*}\\
       &\quad+\frac{\sigma}{2}\big\|X-(D\circ\tau+J\Delta\tau^k-E^k+Y^k/\sigma)\big\|^{2}_{F}\Big\}\\
       &=\D_{1/\sigma}(D\circ\tau+J\Delta\tau^k-E^k+Y^k/\sigma),
\end{align*}
where the last equality is from Proposition \ref{prop1}. Secondly, for every $k=0,1,\ldots$, we have
\begin{align*}
  \Delta\tau^{k+1} &=\argmin_{\Delta\tau} \L_{\sigma}(X^{k+1}, E^{k}, \Delta\tau; Y^{k})\\
                     &=\argmin_{\Delta\tau} \Big\{\big\langle Y^k, J\Delta\tau\big\rangle\\
                     &\qquad+\frac{\sigma}{2}\| D\circ\tau+J\Delta\tau-X^{k+1}-E^k\|^{2}_{F}\Big\},
\end{align*}
which amounts to solving the following linear system of equations with variable $\Delta\tau$
$$
J^* Y^k/\sigma+J^*(D\circ\tau+J\Delta\tau-X^{k+1}-E^k)=0.
$$
Hence, the solution $\Delta\tau^{k+1/2}$  is given explicitly by
$$
\Delta\tau^{k+1}=-(J^* J)^{-1}[J^*(D\circ\tau-X^{k+1}-E^k)+J^* Y^k/\sigma].
$$
Thirdly, for every $k=0,1,\ldots$, we have
\begin{align*}
E^{k+1} &=\argmin_{E} \L_{\sigma}(X^{k+1}, E, \Delta\tau^{k+1}; Y^{k})\\
        &=\argmin_{E}\Big\{\lambda\| E\|_{1}\\
        &\quad+\frac{\sigma}{2}\big\| E-(D\circ\tau+J\Delta\tau^{k+1}-X^{k+1}+Y^k/\sigma)\big\|^{2}_{F}\Big\}\\
        &=\S_{\lambda/\sigma}(D\circ\tau+J\Delta\tau^{k+1}-X^{k+1}+Y^k/\sigma),
\end{align*}
where the last equality is from Proposition \ref{prop2}.

Although the direct extension of ADMM scheme indeed works empirically to produce corrected solutions, it was shown in \cite{CHENMP16} that the scheme (\ref{admmtilt}) is not necessarily convergent in theory. Ideally, we should find a convergent variant which is at least as efficient as the directly extended
ADMM (\ref{admmtilt}) in practice. We achieve this goal by adopting the clever sGS technique developed recently by Li, Sun \& Toh \cite{XDLIMP}. 

Based on the sGS technique \cite{XDLIMP}, we view $X$ as one group and $(\Delta\tau, E)$ as another, and present the following iterative framework: Given $(X^k,E^k,\Delta\tau^k)$, we compute the next iteration with $X\rightarrow\Delta\tau\rightarrow E\rightarrow \Delta\tau$ instead of the usual $X\rightarrow\Delta\tau\rightarrow E$ Gauss-Seidel fashion (\ref{admmtilt}), which can be reduced to the following iterative scheme:
\begin{equation}\label{admmsgs}
\left\{
\begin{array}{lll}
  X^{k+1}&=&\argmin_{X} \L_{\sigma}(X, E^{k}, \Delta\tau^{k}; Y^{k}),\\[2mm]
  \Delta\tau^{k+1/2} &=&\argmin_{\Delta\tau} \L_{\sigma}(X^{k+1}, E^{k}, \Delta\tau; Y^{k}),\\[2mm]
  E^{k+1} &=&\argmin_{E} \L_{\sigma}(X^{k+1}, E, \Delta\tau^{k+1/2}; Y^{k}),\\[2mm]
  \Delta\tau^{k+1} &=&\argmin_{\Delta\tau} \L_{\sigma}(X^{k+1}, E^{k+1}, \Delta\tau; Y^{k}),\\[2mm]
  Y^{k+1}&=&Y^{k}+\xi\sigma(D\circ\tau+J\Delta\tau^{k+1}-X^{k+1}-E^{k+1}).
\end{array}
\right.
\end{equation}
Note that the difference between the sGS based iterative scheme (\ref{admmsgs}) and the directly extended ADMM (\ref{admmtilt}) is that we perform an
extra preparation step to compute $\Delta\tau^{k+1/2}$ and then compute $E^{k+1}$. As we can see from the previous statement that the extra step can be done at moderate cost, so that the iterative process can be performed cheaply.

With the descriptions on how the subproblems in (\ref{admmtilt}) are solved as in \cite{TILT}, we now present the sGS-ADMM method in \cite{XDLIMP} for solving (\ref{modelthree}). 

\begin{framed}
\noindent
{\bf Algorithm: (sGS-ADMM)}
\vskip 1.0mm \hrule \vskip 1mm
\noindent
1. Initialization: Input deformed and corrupted image $D\in\mathbb{R}^{m\times n}$ and its Jabobian $J$ against deformation $\tau$. Choose constants $\lambda>0$, $\sigma>0$, and $\xi\in(0,(1+\sqrt{5})/2)$.
Choose starting point $(X^0,\Delta\tau^0,E^0,Y^0)$.\\
2. {\bf while.} ``not converge", {\bf do}\\[2mm]
3. \  $X^{k + 1} = \D_{1/\sigma}(D\circ\tau+J\Delta\tau^k-E^k+Y^k/\sigma)$;\\[2mm]
4. \ $\Delta\tau^{k+1/2}=-(J^* J)^{-1}[J^* (D\circ\tau-X^{k+1}-E^k)+J^*Y^k/\sigma]$; \\[2mm]
5. \ $E^{k + 1}=\S_{\lambda/\sigma}(D\circ\tau+J\Delta\tau^{k+1/2}-X^{k+1}+Y^k/\sigma)$;\\[2mm]
6. \  $\Delta\tau^{k+1}=-(J^* J)^{-1}[J^*(D\circ\tau-X^{k+1}-E^{k+1})+J^*Y^k/\sigma]$; \\[2mm]
7. \  $Y^{k+1}=Y^{k}+\xi\sigma(D\circ\tau+J\Delta\tau^{k+1}-X^{k+1}-E^{k+1})$;\\[2mm]
8. {\bf end while.}\\
9. Output: Solution $(X,\Delta \tau, E)$ of problem (\ref{modelthree}).
\end{framed}

The remaining task is to establish the convergence result of sGS-ADMM by using the sGS decomposition theorem of Li, Sun \& Toh \cite{SGSTH} to associate it
with the semi-proximal ADMM (\ref{sadmm}). The relationship between both methods are reported in the lemma below. 
\begin{lemma}\label{propositon1}
For any $k\geq0$, the $E$- and $\Delta\tau$-subproblems in (\ref{admmsgs}) can be summarized as the following compact form:
\begin{equation}\label{propo2}
\begin{array}{ll}
   &(E^{k+1},\Delta\tau^{k+1}) \\
  =& \argmin_{E,\Delta\tau}\Big\{\L_{\sigma}(X^{k+1}, E, \Delta\tau; Y^{k})\\
  &\qquad\qquad\quad\quad+\frac{\sigma}{2}\Big\|
  \left(
   \begin{array}{c}
   E\\
  \Delta\tau\\
   \end{array}
   \right)-
   \left(
   \begin{array}{c}
   E^{k}\\
  \Delta\tau^{k}\\
   \end{array}
   \right)
   \Big\|^{2}_{\T}\Big\}.
\end{array}
\end{equation}
\end{lemma}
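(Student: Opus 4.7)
The plan is to view the three consecutive sub-iterations for $\Delta\tau^{k+1/2}$, $E^{k+1}$, and $\Delta\tau^{k+1}$ in (\ref{admmsgs}) as one symmetric Gauss-Seidel cycle applied to the joint $(E,\Delta\tau)$-minimization of $\L_{\sigma}(X^{k+1},\cdot,\cdot;Y^{k})$, and then to invoke the sGS decomposition theorem of Li, Sun \& Toh \cite{SGSTH} to rewrite this cycle as the single proximal minimization appearing in (\ref{propo2}).

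First, I would freeze $X=X^{k+1}$ and $Y=Y^{k}$ in (\ref{algf}) and drop additive constants, leaving the function of $(E,\Delta\tau)$
$$
\Phi(E,\Delta\tau)=\lambda\|E\|_{1}+\frac{\sigma}{2}\bigl\|E-J\Delta\tau-R^{k}\bigr\|_{F}^{2},
$$
where $R^{k}:=D\circ\tau-X^{k+1}+Y^{k}/\sigma$. Its smooth part is a convex quadratic in $(E,\Delta\tau)$ whose block Hessian, in the ordering $(E,\Delta\tau)$, is
$$
\H=\sigma\begin{pmatrix} I & -J\\ -J^{*} & J^{*}J\end{pmatrix},
$$
which is positive semidefinite via the factorization $\H=\sigma\,(-I,\,J)^{*}(-I,\,J)$, and whose diagonal blocks $\sigma I$ and $\sigma J^{*}J$ are positive definite thanks to the non-degeneracy assumption on $J$ imposed throughout the TILT linearization leading to (\ref{modelthree}).

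Next, I would match the algorithmic order $\Delta\tau\to E\to\Delta\tau$ with the canonical two-block sGS template of \cite{SGSTH}: step~4 is a forward sweep of the smooth block $\Delta\tau$, step~5 is the non-smooth update of $E$ using the freshly computed $J\Delta\tau^{k+1/2}$, and step~6 is the backward sweep that re-updates $\Delta\tau$ using the new $E^{k+1}$. This is exactly the configuration in which the non-smooth term $\lambda\|E\|_{1}$ sits on the once-swept variable while $\Delta\tau$ plays the role of the twice-swept variable, so the sGS decomposition theorem applies and yields (\ref{propo2}) with a positive semidefinite operator $\T$ constructed from the off-diagonal block $-\sigma J$ together with the diagonal blocks $\sigma I$ and $\sigma J^{*}J$ of $\H$ via the Schur-complement-type formula in \cite{SGSTH}. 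The only delicate step is this bookkeeping, namely aligning the ordering in (\ref{admmsgs}) with the canonical ordering in \cite{SGSTH} and verifying that the non-smooth piece is indeed attached to the middle variable of the cycle; once that is in place, both the identity (\ref{propo2}) and the positive semidefiniteness of $\T$ follow directly from the cited theorem, with no further computation required for the statement of the lemma.
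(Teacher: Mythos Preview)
Your proposal is correct and follows essentially the same route as the paper: identify the block Hessian of the quadratic part of $\L_{\sigma}(X^{k+1},\cdot,\cdot;Y^{k})$ in the variables $(E,\Delta\tau)$, split it as diagonal plus strictly upper/lower triangular blocks, and invoke the sGS decomposition theorem of \cite{SGSTH} to obtain (\ref{propo2}) with $\T=\M\Q^{-1}\M^{*}$. The paper additionally writes out $\T$ explicitly as $\diag\big(J(J^{*}J)^{-1}J^{*},\,0\big)$, which you allude to but do not compute; since this formula is used later (e.g., in (\ref{admmsgs2}) and (\ref{admmsgs-g2})), it is worth recording, and your labeling of the two $\Delta\tau$ updates as ``forward'' then ``backward'' is reversed relative to the standard sGS convention, though this is purely terminological.
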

\begin{proof}
To prove the lemma, it is sufficient to note that the matrix for the quadratic term associated with $(E,\Delta\tau)$ is given by
\begin{equation*}
  \H=\left(
   \begin{array}{cc}
   I&-J\\
   -J^*&J^{*}J\\
   \end{array}
   \right)=\Q+\M+\M^{*},
\end{equation*}
where
$$
\M=
  \left(
   \begin{array}{cc}
   0&-J\\
   0&0\\
   \end{array}
   \right),
  \
 \Q=
  \left(
   \begin{array}{cc}
   I&0\\
   0&J^{*}J\\
   \end{array}
   \right),
$$
and
$$
 \M^{*}=
  \left(
   \begin{array}{cc}
   0&0\\
   -J^{*}&0\\
   \end{array}
   \right).
$$
By directly applying the sGS decomposition theorem in \cite{SGSTH}, and setting
 \begin{equation}\label{xiet}
   \T=\M\Q^{-1}\M^{*}=\left(
   \begin{array}{cc}
   J(J^{*}J)^{-1}J&0\\
   0&0\\
   \end{array}
   \right),
 \end{equation}
the required conclusion follows.
\end{proof}

Based on the result, we can rewrite the iterative scheme (\ref{admmsgs}) as

\begin{equation}\label{admmsgs2}
\left\{
\begin{array}{cl}
  X^{k+1}&=\argmin_{X} L(X, E^{k}, \Delta\tau^{k}; Y^{k}),\\[2mm]
  (E^{k+1},\Delta\tau^{k+1}) &= \argmin_{E,\Delta\tau}\Big\{\L_{\sigma}(X^{k+1}, E, \Delta\tau; Y^{k})\\[2mm]
  & \quad +\frac{\sigma}{2}\Big\|
  \left(
   \begin{array}{c}
   E\\
  \Delta\tau\\
   \end{array}
   \right)-
   \left(
   \begin{array}{c}
   E^{k}\\
  \Delta\tau^{k}\\[2mm]
   \end{array}
   \right)
   \Big\|^{2}_{\T}\Big\},\\[2mm]
  Y^{k+1}&=Y^{k}+\xi\sigma(D\circ\tau+J\Delta\tau^{k+1}\\[2mm]
         &\qquad\qquad\qquad-X^{k+1}-E^{k+1}),
\end{array}
\right.
\end{equation}
which reduces to the two-block semi-proximal ADMM (\ref{sadmm}). Note that the main idea of the sGS decomposition theorem \cite{SGSTH} for deriving the convergence of sGS-ADMM by showing that it is equivalent to two-block ADMM with a special semi-proximal term $\T$. This equivalence is very important because the convergence can be easily followed by using 
the known convergence result of Fazel et al. \cite{SEMP13}.
To conclude this section, we present the convergence result of sGS-ADMM for solving (\ref{modelthree}).

\begin{theorem}\label{theo1}
(\cite[Theorem B.1]{SEMP13}) Let the sequence $\{(X^k,\Delta\tau^k,E^k,Y^k)\}$ be generated by Algorithm sGS-ADMM with $\xi\in(0,(1+\sqrt{5})/2)$, then it converges to the accumulation point $(\bar{X},\Delta\bar{\tau}, \bar{E},\bar{Y})$ such that  $(\bar{X},\Delta\bar{\tau}, \bar{E})$ is the solution of the problem (\ref{modelthree}).
\end{theorem}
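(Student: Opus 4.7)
The plan is to deduce Theorem 3.1 by recognizing the sGS-ADMM iteration (3.12) as a two-block semi-proximal ADMM of the form (2.7) and then invoking Theorem 2.1 of Fazel, Pong, Sun \& Tseng. Lemma 3.1 already does the bulk of the work, collapsing the $\Delta\tau \to E \to \Delta\tau$ sweep into a single joint minimization over $(E,\Delta\tau)$ with an added semi-proximal term $\frac{\sigma}{2}\|\cdot\|_{\T}^{2}$, where $\T = \M\Q^{-1}\M^{*}$ is the positive semidefinite matrix displayed in (3.14).

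First, I would cast problem (3.3) in the two-block template (2.6) by grouping the variables appropriately. Set $y := X$ with $f(y) := \|X\|_{*}$, and set $z := (E,\Delta\tau)$ with $g(z) := \lambda\|E\|_{1}$, regarded as constant in $\Delta\tau$. Both $f$ and $g$ are proper closed convex, and the equality $D\circ\tau + J\Delta\tau = X + E$ is of the affine form $\A^{*}y + \B^{*}z = c$. Under this identification, the reformulated scheme (3.15) coincides exactly with the iteration (2.7) specialized to $\T_f = 0$ and $\T_g = \T$.

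Second, I would verify the hypotheses of Theorem 2.1. The proximal operator $\T$ is positive semidefinite because its only nontrivial diagonal block, $J(J^{*}J)^{-1}J^{*}$, is an orthogonal projector (well defined since $J$ has full column rank, an assumption already needed for Steps~4 and~6 of the algorithm). For Assumption 2.1, both $\dom f$ and $\dom g$ are the ambient Euclidean spaces, so their relative interiors coincide with the spaces themselves; the constraint qualification thus reduces to feasibility, which is immediate by choosing any $\bar{X}$ and $\Delta\bar{\tau}$ and setting $\bar{E} := D\circ\tau + J\Delta\bar{\tau} - \bar{X}$. Nonemptiness of the solution set of (3.3) follows from coercivity of $\|X\|_{*} + \lambda\|E\|_{1}$ on the affine feasible set together with standard lower semicontinuity arguments.

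With all hypotheses in place, Theorem 2.1 applied to (3.15) yields convergence of $\{(X^{k}, E^{k}, \Delta\tau^{k}; Y^{k})\}$ to a limit $(\bar{X}, \bar{E}, \Delta\bar{\tau}; \bar{Y})$ such that $(\bar{X}, \bar{E}, \Delta\bar{\tau})$ solves (3.3), for any $\xi \in (0,(1+\sqrt{5})/2)$. The only real obstacle is conceptual and is absorbed into Lemma 3.1: one must check that the extra preparatory $\Delta\tau^{k+1/2}$ step together with the final $\Delta\tau^{k+1}$ update is algebraically equivalent to a single joint minimization with the PSD proximal term $\T$; once this equivalence is in hand, the convergence analysis reduces entirely to citing \cite{SEMP13}.
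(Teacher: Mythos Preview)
Your proposal is correct and follows essentially the same approach as the paper: use Lemma~3.1 (the sGS decomposition) to rewrite the iteration as the two-block semi-proximal ADMM (3.15), then invoke Theorem~2.1 (i.e., \cite[Theorem~B.1]{SEMP13}). The paper in fact gives no separate proof of Theorem~3.1 beyond this reduction, so your additional verification of the hypotheses (positive semidefiniteness of $\T$, Assumption~2.1, and nonemptiness of the solution set) fills in details the paper leaves implicit.
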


\section{An sGS based generalized ADMM}\label{theory}
\setcounter{equation}{0}

This section is devoted to introducing a generalized variant of sGS-ADMM for solving probelm (\ref{modelthree}). Again, variable $X$
is viewed as one group and $(\Delta\tau, E)$ as another, and sGS technique with order $\Delta\tau\rightarrow E\rightarrow \Delta\tau$
is used in this group. For convenience, we denote $\Omega=(X,\Delta\tau,E,Y)$. The aforementioned sGS-ADMM will make a very small
modification, i.e., adding an extra relaxation step, which amounts to the algorithm below with an initial porint $\tilde{\Omega}^0=(\tilde{X}^0,\Delta\tilde{\tau}^0,\tilde{E}^0,\tilde{Y}^0)$.

\begin{framed}
\noindent {\bf Algorithm: (sGS-ADMM\_G)}
\vskip 1.0mm \hrule \vskip 1mm
\noindent
1. Initialization: Input deformed and corrupted image $D\in\mathbb{R}^{m\times n}$ and its Jabobian $J$ against deformation $\tau$. Choose constants $\lambda>0$, $\sigma>0$, and $\rho\in(0,2)$. Choose starting point $(\tilde X^0,\tilde E^0,\tilde Y^0)$\\
2. {\bf while.} ``not converge", {\bf do}\\[2mm]
3. \  $\Delta\tau^{k+1/2}=-(J^* J)^{-1}[J^*(D\circ\tau-\tilde X^{k}-\tilde E^k)+ J^*\tilde Y^k/\sigma]$; \\[2mm]
4. \  $E^{k + 1}= \S_{\lambda/\sigma}(D\circ\tau+J\Delta\tau^{k+1/2}-\tilde X^{k}+\tilde Y^k/\sigma)$;\\[2mm]
5. \  $\Delta\tau^{k+1}=-(J^* J)^{-1}[J^*(D\circ\tau-\tilde X^{k}-E^{k+1})+J^*\tilde Y^k/\sigma]$; \\[2mm]
6. \  $Y^{k+1}=Y^{k}+\sigma(D\circ\tau+J\Delta\tau^{k+1}-\tilde X^{k}-E^{k+1})$;\\[2mm]
7. \  $X^{k + 1} = \D_{1/\sigma}(D\circ\tau+J\Delta\tau^{k+1}-E^{k+1}+Y^{k+1}/\sigma)$;\\[2mm]
8. \  $\tilde \Omega^{k+1}=\tilde \Omega^{k}+\rho(\Omega^{k+1}-\tilde \Omega^{k});$\\[2mm]
9. {\bf end while.}\\
10. Output: Solution $(X,\Delta \tau, E)$ of problem (\ref{modelthree}).
\end{framed}

Analogously, based on the novel sGS decomposition theorem of Li, Sun \& Toh \cite{SGSTH}, we can reformulated sGS-ADMM\_G as the following framework: \begin{equation}\label{admmsgs-g2}
\left\{
\begin{array}{l}
  (E^{k+1},\Delta\tau^{k+1}) = \argmin_{E,\Delta\tau}\Big\{\L_{\sigma}(\tilde X^{k}, E, \Delta\tau; \tilde Y^{k})\\[2mm]
   \qquad\qquad\qquad+\frac{\sigma}{2}\Big\|
  \left(
   \begin{array}{c}
   E\\
  \Delta\tau\\
   \end{array}
   \right)-
   \left(
   \begin{array}{c}
   \tilde E^{k}\\
  \tilde \Delta\tau^{k}\\
   \end{array}
   \right)
   \Big\|^{2}_{\T}\Big\},\\  [2mm]
  Y^{k+1}=\tilde Y^{k}+\sigma(D\circ\tau+J\Delta\tau^{k+1}-\tilde X^{k}-E^{k+1}),\\[2mm]
  X^{k+1}=\argmin_{X} \L_{\sigma}(X, E^{k+1}, \Delta\tau^{k+1}; Y^{k+1}),  \\[2mm]
  \tilde \Omega^{k+1}=\tilde \Omega^{k}+\rho(\Omega^{k+1}-\tilde \Omega^{k}),
\end{array}
\right.
\end{equation}
where $\T$ is defined in (\ref{xiet}).Therefore, according to Theorem B.1 in \cite{SEMP13} and Theorem 5.1 in \cite{XIAOMPC}, the convergence result of Algorithm sGS-GADMM\_G can be listed.

\begin{theorem}\label{theo2}
(\cite[Theorem B.1]{SEMP13}, \cite[Theorem 5.1]{XIAOMPC}) Let the sequence $\{(X^k,\Delta\tau^k,E^k,Y^k)\}$ be generated by Algorithm sGS-ADMM\_G with $\rho\in(0,2)$, then it is automatically well-defined and converges to the accumulation point $(\bar{X},\Delta\bar{\tau}, \bar{E},\bar{Y})$ such that  $(\bar{X},\Delta\bar{\tau}, \bar{E})$ is the solution of the problem (\ref{modelthree}).
\end{theorem}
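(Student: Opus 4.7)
The plan is to mirror the strategy used to establish Theorem \ref{theo1}, reducing Algorithm sGS-ADMM\_G to the generalized two-block semi-proximal ADMM scheme \eqref{gadmm-gu} and then invoking Theorem \ref{theorem} directly. The key is that the sGS decomposition theorem \cite{SGSTH} is agnostic to whether the outer loop is the classical or the generalized ADMM: it only rewrites the inner sweep over $(E,\Delta\tau)$ as a single proximal minimization with a specific semi-proximal term $\mathcal{T}$.

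First, I would treat $X$ as one block (playing the role of $y$ in \eqref{gadmm-gu}) and $(E,\Delta\tau)$ as the second block (playing the role of $z$), with the linear constraint $D\circ\tau+J\Delta\tau-X-E=0$ cast in the form $\mathcal{A}^*X+\mathcal{B}^*(E,\Delta\tau)=c$, and the objective split as $f(X)=\|X\|_*$ and $g(E,\Delta\tau)=\lambda\|E\|_1$. Under this grouping the reformulated scheme \eqref{admmsgs-g2} is literally in the template \eqref{gadmm-gu} with $\mathcal{T}_f=0$ and $\mathcal{T}_g=\mathcal{T}$ given by \eqref{xiet}. The only thing needing justification is the equivalence between the combined $(E,\Delta\tau)$-step in \eqref{admmsgs-g2} and lines 3--5 of Algorithm sGS-ADMM\_G; this is exactly the content of Lemma \ref{propositon1}, whose derivation is algebraic and identical in the present context because the quadratic part of the augmented Lagrangian in $(E,\Delta\tau)$ is unchanged.

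Second, I would verify the hypotheses of Theorem \ref{theorem}. Positive semi-definiteness of $\mathcal{T}$ is immediate because $J(J^*J)^{-1}J^*$ is an orthogonal projector, using the standing assumption that $D\circ\tau^{(i)}$ does not lie in the range of $J$ (so $J^*J$ is invertible and $J$ has full column rank). Assumption \ref{a1} holds trivially: both $\|\cdot\|_*$ and $\|\cdot\|_1$ have full effective domain, so their relative interiors coincide with the whole ambient matrix spaces, and any feasible triple satisfies the requirement. Well-definedness of the iterates is automatic once each subproblem admits a unique closed-form solution, which is guaranteed by Propositions \ref{prop1}--\ref{prop2} for the $X$- and $E$-updates and by the invertibility of $J^*J$ for the $\Delta\tau$-updates.

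With these two ingredients in place, Theorem \ref{theorem} (as stated for the scheme \eqref{gadmm-gu} with $\rho\in(0,2)$) applies verbatim to yield convergence of $\{(X^k,\Delta\tau^k,E^k,Y^k)\}$ to a limit point $(\bar X,\Delta\bar\tau,\bar E,\bar Y)$ whose primal part solves \eqref{modelthree}. The main, and indeed only, obstacle is the bookkeeping in the first step: the order of the sweeps in \eqref{admmsgs-g2} puts the $(E,\Delta\tau)$ block \emph{before} the $X$ block, so one must match block labels carefully when aligning with the $z$-then-$y$ order in \eqref{gadmm-gu}. Once that correspondence is fixed, the proof reduces to a direct citation of Theorem \ref{theorem}.
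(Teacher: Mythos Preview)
Your proposal is correct and follows essentially the same route as the paper: reformulate sGS-ADMM\_G via the sGS decomposition (Lemma~\ref{propositon1}) into the two-block generalized semi-proximal ADMM scheme \eqref{admmsgs-g2}, then invoke Theorem~\ref{theorem} directly. One minor quibble: the invertibility of $J^*J$ is not a consequence of the assumption that $D\circ\tau^{(i)}$ lies outside the range of $J$; rather, it is a separate standing assumption implicit throughout the paper (otherwise the $\Delta\tau$-updates and the operator $\mathcal{T}$ in \eqref{xiet} are undefined).
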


\section{Numerical Experiments}\label{numersec}
In this section, we construct a series of numerical experiments by using deformations contained real images to evaluate the practical performance of
algorithms sGS-ADMM and sGS-ADMM\_G. In the mean time, we also test against the directly extended ADMM approach (named TILT) to further illustrate the efficiency and
robustness of sGS-ADMM and sGS-ADMM\_G. The Matlab package for the algorithm TILT is available at the link \url{http://perception.csl.illinois.edu/matrix-rank/tilt.html}. We mention that all these algorithms are tested by running {\sc Matlab} on a LENOVO with one Intel Core i5-5200U Processor (24 Cores, 2.2 to 2.19 GHz) and 8 GB  RAM.

In order to make both algorithms well-defined on the original model (\ref{modeltwo}), we quickly review some specific implementation details reported by Zhang {\itshape et al.} \cite{TILT}, which are also used in both algorithms. Firstly, before starting the iterative process, the intensity of the image is normalized as $D\circ \tau:=D\circ\tau/\|D\circ\tau\|_F$ because the low-rank texture is invariant
with respect to scaling in the pixel values. Secondly, a set of linear constraints is added to eliminate the scaling and translation ambiguities in the solution, e.g.,
for affine transformations,  a constraint $A_t\Delta\tau=0$ (liner operator $A_t$ is known) is added to ensure that the center of the initial rectangular region remain fixed before and after the transformation.
Finally, to increase the range of deformation, TILT employed a branch-and-bound scheme, e.g., for affine case,  the affine transformation can be parameterized as $A(\theta,t)$, then TILT computes the best rotation angle $\theta$ and used it as an initialization to search for the skew parameter $t$. It was shown that these reviewed implementations improved the range of convergence of TILT potentially. Since the main contribution of our paper lies in employing  the sGS-ADMM algorithm to solve the problem (\ref{modelthree}), hence, in the following each experiment, we only consider the single affine transformation and use the branch-and-bound scheme for convenience.

We perform two classes of numerical experiments. In the first class, we evaluate the practical performance of sGS-ADMM and sGS-ADMM\_G on some natural images belong to various categories, while in the second class, we test against algorithm TILT on some representative synthetic and realistic low-rank patterns to examine both algorithms'
performance. In the considered model, we choose parameter as $\lambda=1/\sqrt{m}$ and set the initial estimation as $\tau^{(0)}=0$. The iterative processes of each algorithm start at zero, i.e., $(E^0,\Delta\tau^0,Y^0,X^0)=(\tilde{E}^0,\Delta\tilde{\tau},\tilde{Y}^0,\tilde{X}^0)=0$. Moreover, we choose $\xi=1.618$ in sGS-ADMM and $\rho=1.8$ in sGS-ADMM\_G since both values are used frequently in algorithms' designing for solving various optimization problems.

Based on the optimality condition (\ref{kkt}),
we measure the accuracy of a computed candidate solution $(E,\Delta\tau,X;Y)$ for (\ref{modelthree}) and its dual (\ref{dualprob2}) via
$$
\eta=\max\{\eta_P,\eta_D,\eta_X,\eta_E \},
$$
where
\begin{equation}\label{stop3}
\begin{array}{l}
\eta_P=\max\Big\{\frac{\|D\circ\tau+J\Delta\tau-X-E\|_F}{\|D\circ\tau\|_F},A_t\Delta\tau\Big\},\\[2mm]
\eta_D=\|J^*Y\|,\\[2mm]
\eta_X=\frac{\|Y-\Pi_{\B_1}(Y+X)\|_F}{1+\|Y\|_F+\|X\|_F},\\[2mm]
\eta_E=\frac{\|Y-\Pi_{\B_2}(Y+E)\|_F}{1+\|Y\|_F+\|E\|_F},
\end{array}
\end{equation}
where $\Pi_{\B}(\cdot)$ is the metric projection onto $\B$ under the Frobenius norm. According to the adjustment strategy in \cite{LEE}, we initialize the penalty parameter as $\sigma=1/\|D\circ\tau^{(i)}\|_F$, and increase it frequently with $\sigma=1.25\sigma$ if $\eta_P/\eta_D\geq 5$ in each test, and decrease it with $\sigma=0.8\sigma$ if $\eta_P/\eta_D\leq 1/5$. All the algorithms are terminated if $\eta<10^{-3}$, or the maximum iteration number $1,000$ is achieved.
Besides, others the parameters' in TILT are set with default values for comparison in a fair way. While the final $\Delta\bar{\tau}$ is achieved based on the current $\tau^{(i)}$, we then set $\tau^{(i+1)}:=\tau^{(i)}+\Delta\bar{\tau}$ and solve problem (\ref{modelthree}) immediately once again. As in \cite{TILT}, the external loops proceed repeatedly when the absolute values between two successive rounds are small enough, i.e., $|f^{(i+1)}-f^{(i)}|\leq 10^{-4}$, where $f^{(i)}=\| X\|_{*}+\lambda\|E\|_{1}$  at the $i$-th external loop.

In the first test, we visually examine the practical performance of sGS-ADMM and sGS-ADMM\_G.  As Figure \ref{fig1} shows, the results report the original input images and the rectified textures returned by each algorithm. As can be observed from the last two columns that, both algorithms correctly recovered the local geometry for all the textual images from green windows located inside, which in turn demonstrates the practical efficiency of both algorithms.

\begin{figure}[htbp]
\centering
  \includegraphics[scale=0.19]{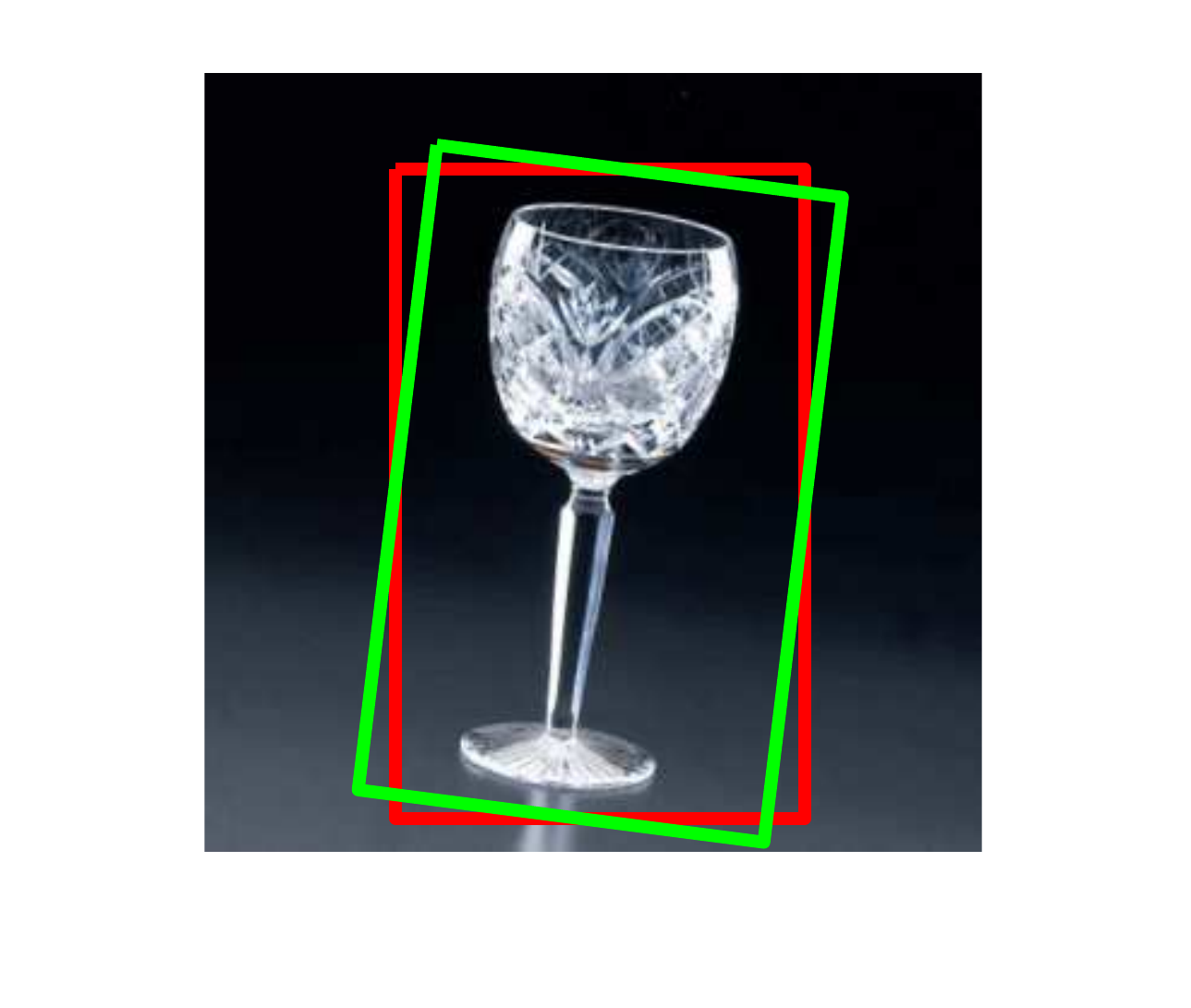}\hspace{-0.4cm}
  \includegraphics[scale=0.17]{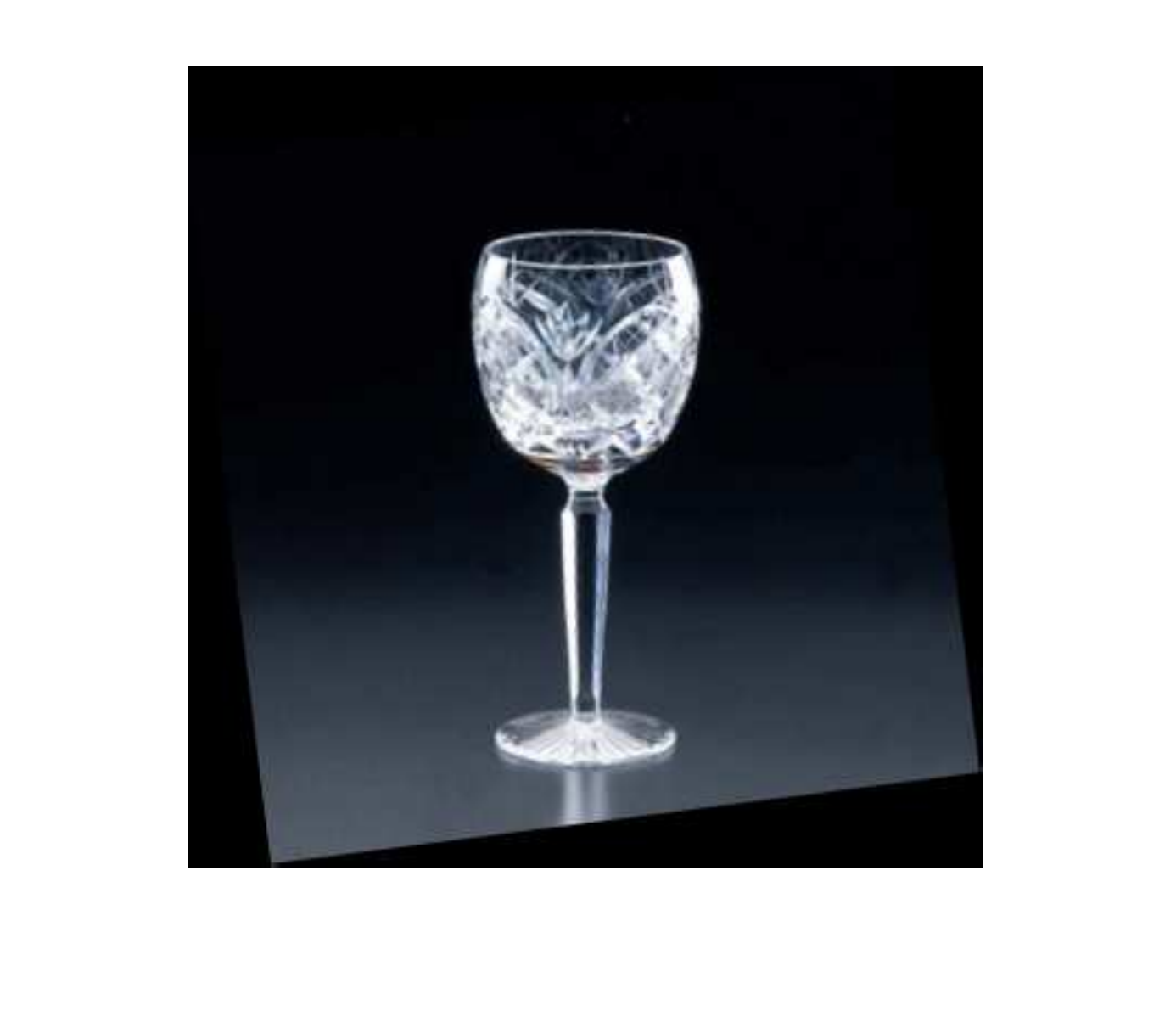}\hspace{-0.4cm}
  \includegraphics[scale=0.22]{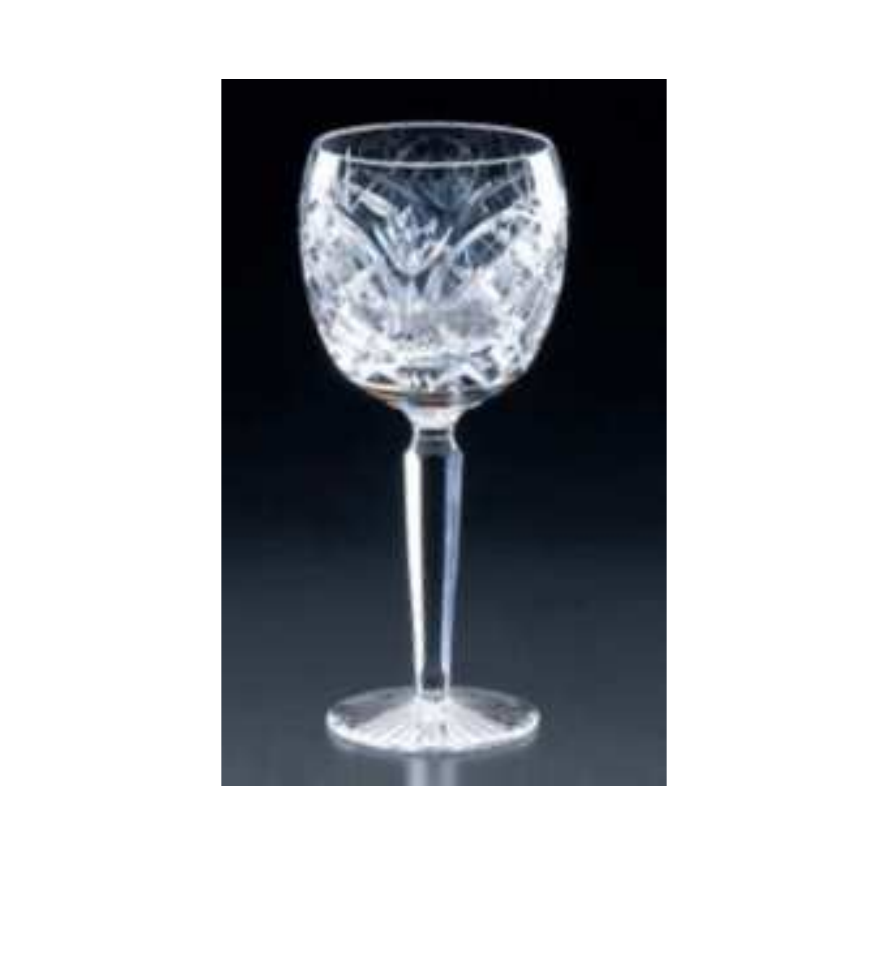}\hspace{-0.4cm}
  \includegraphics[scale=0.22]{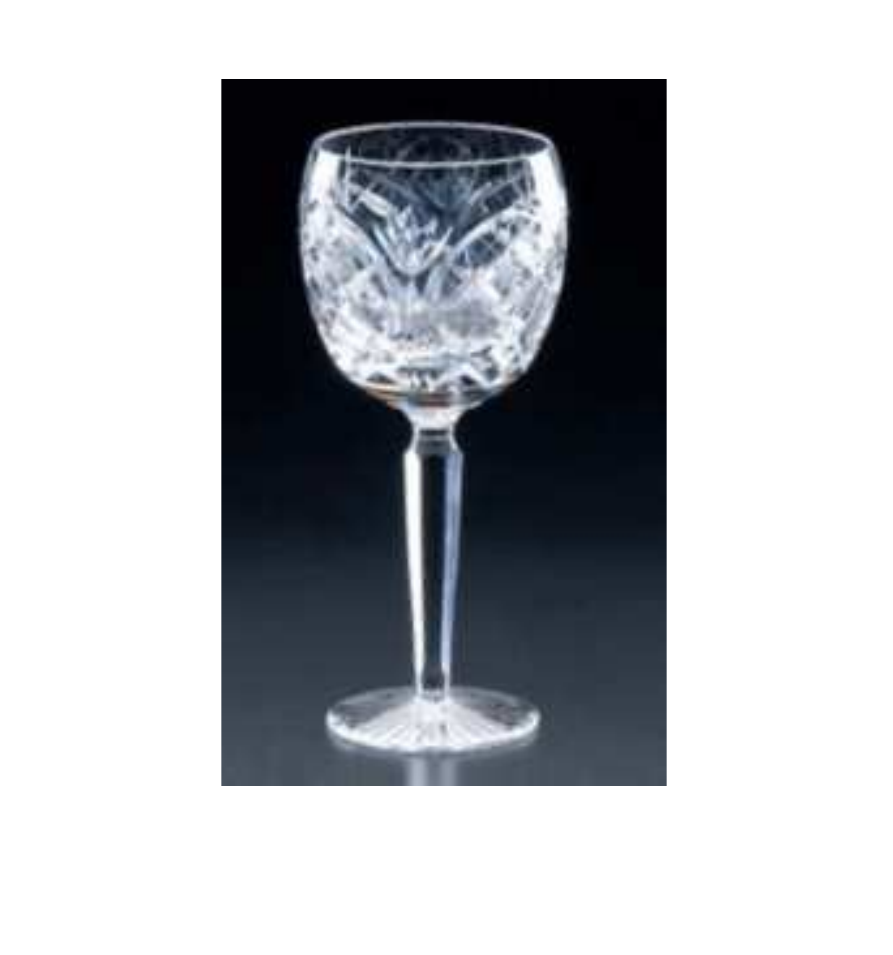}\\
  \includegraphics[scale=0.125]{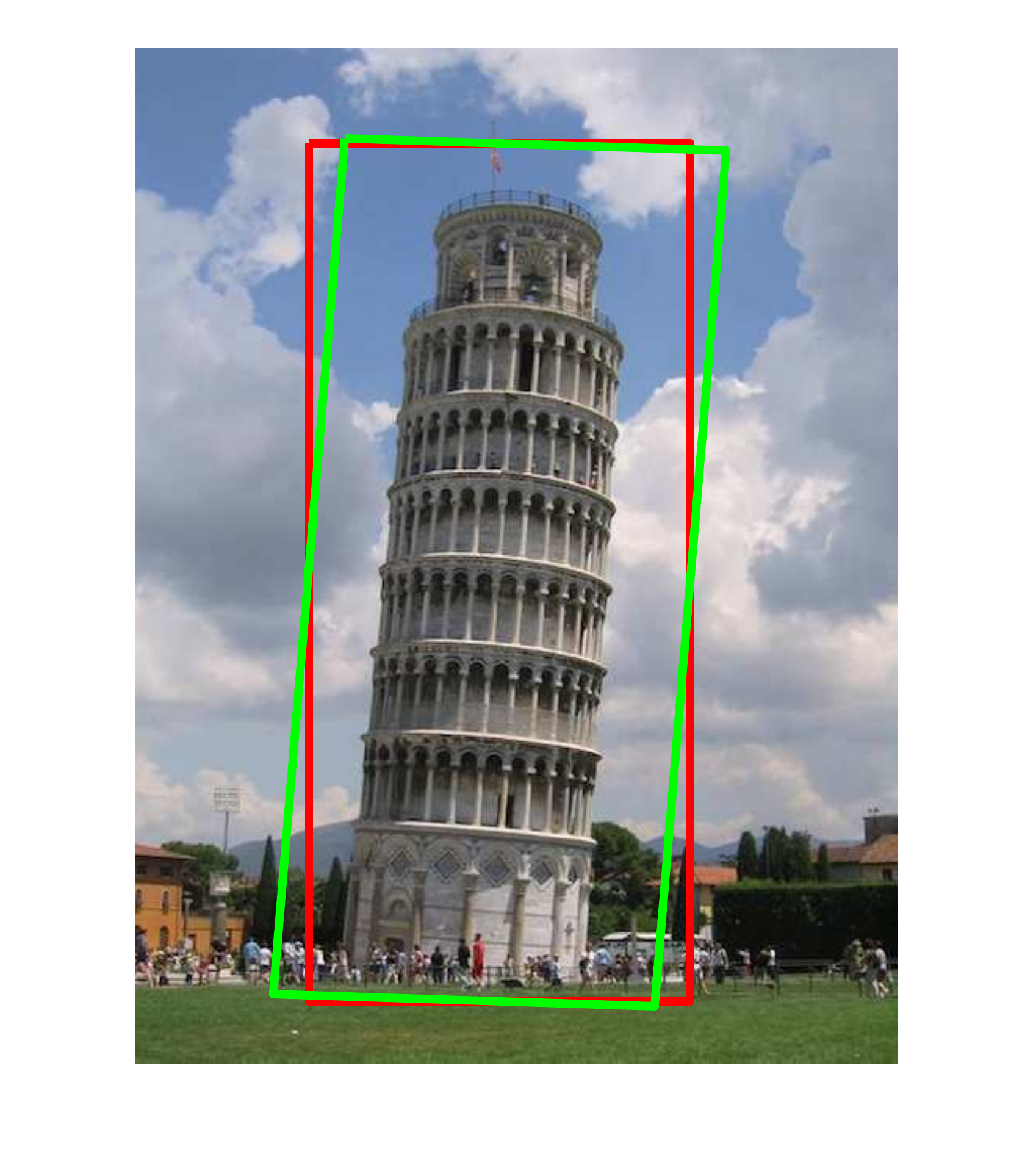}\hspace{-0.18cm}
  \includegraphics[scale=0.12] {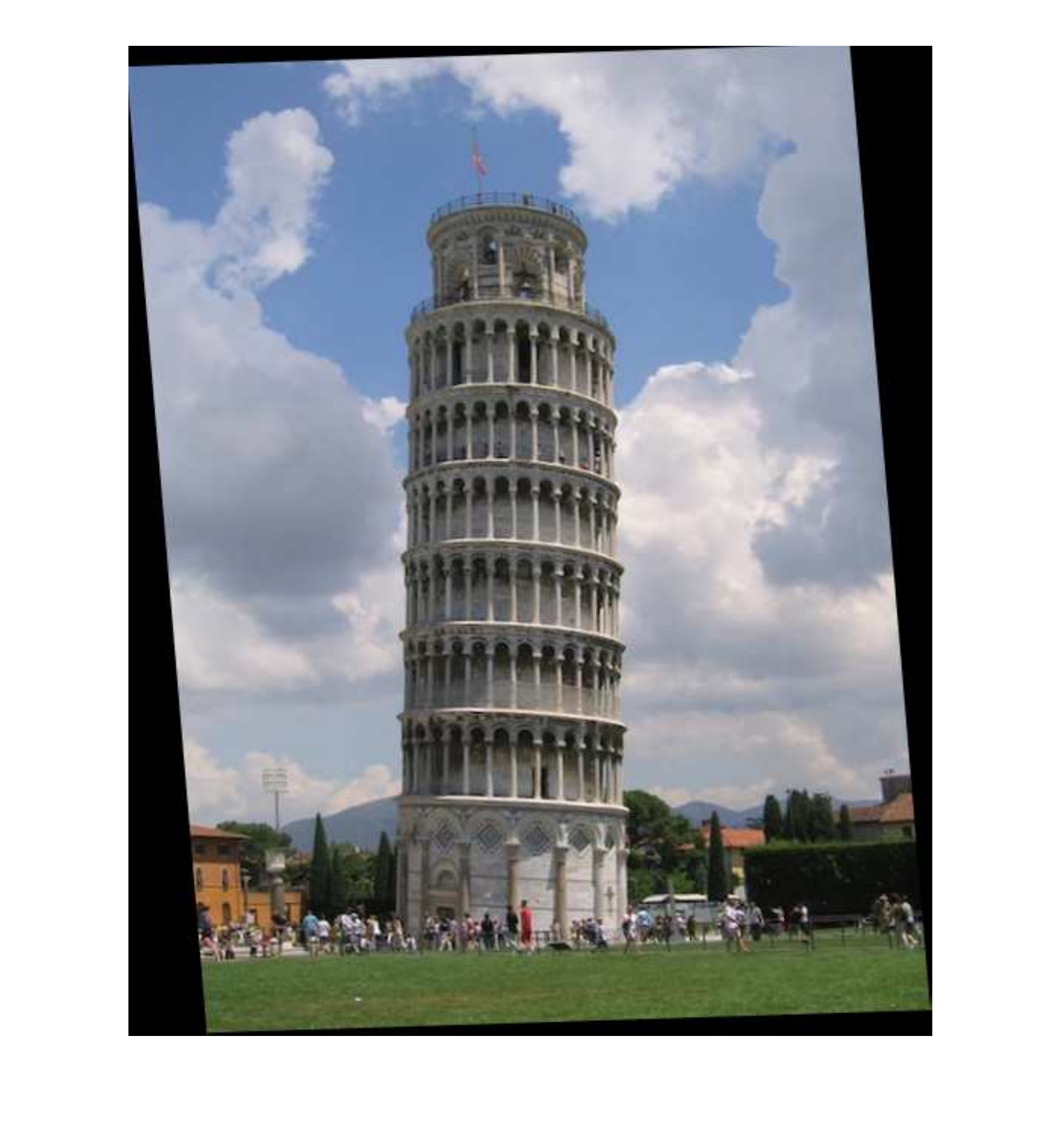}\hspace{-0.22cm}
  \includegraphics[scale=0.145]{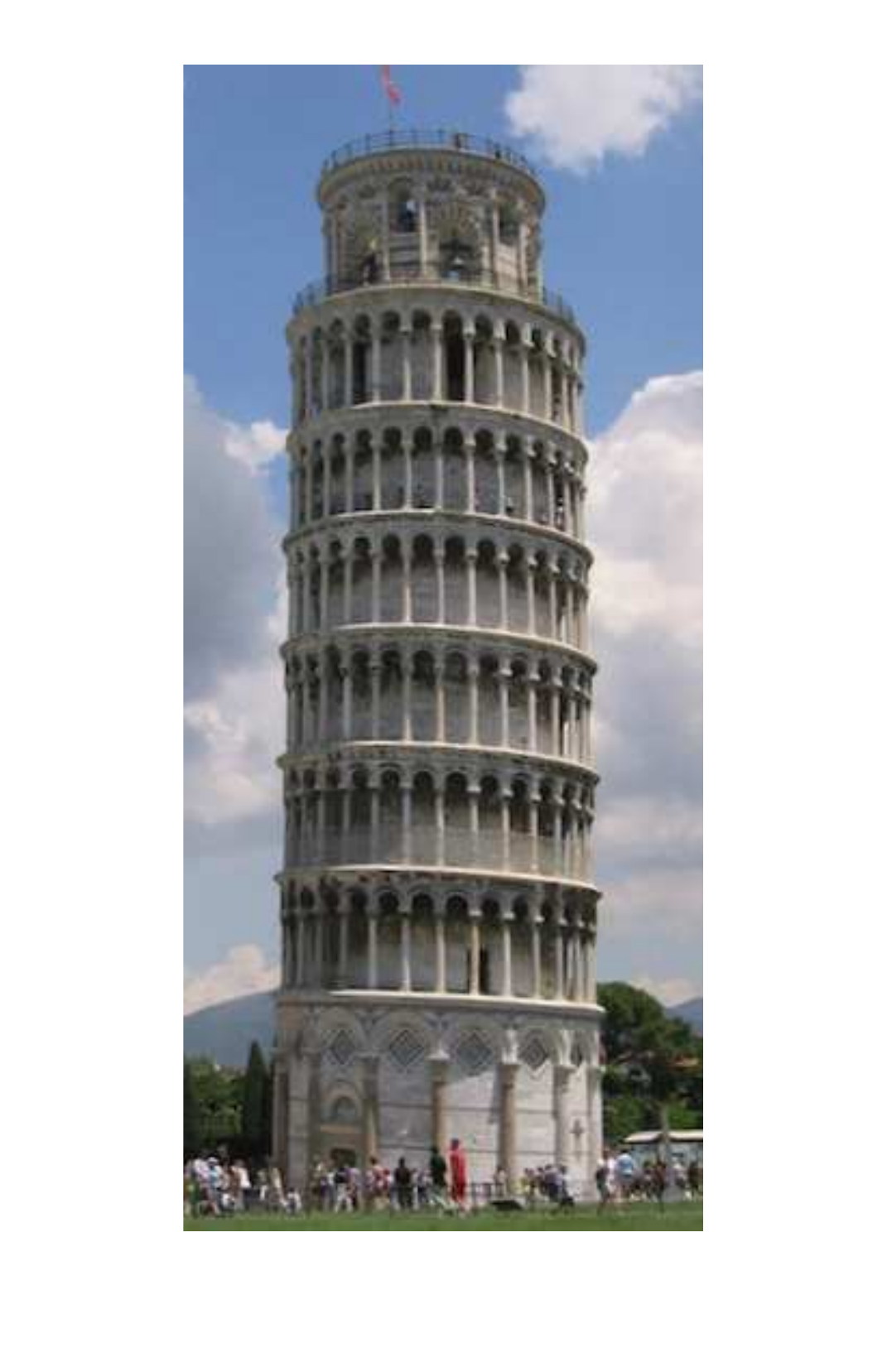}\hspace{-0.22cm}
  \includegraphics[scale=0.145]{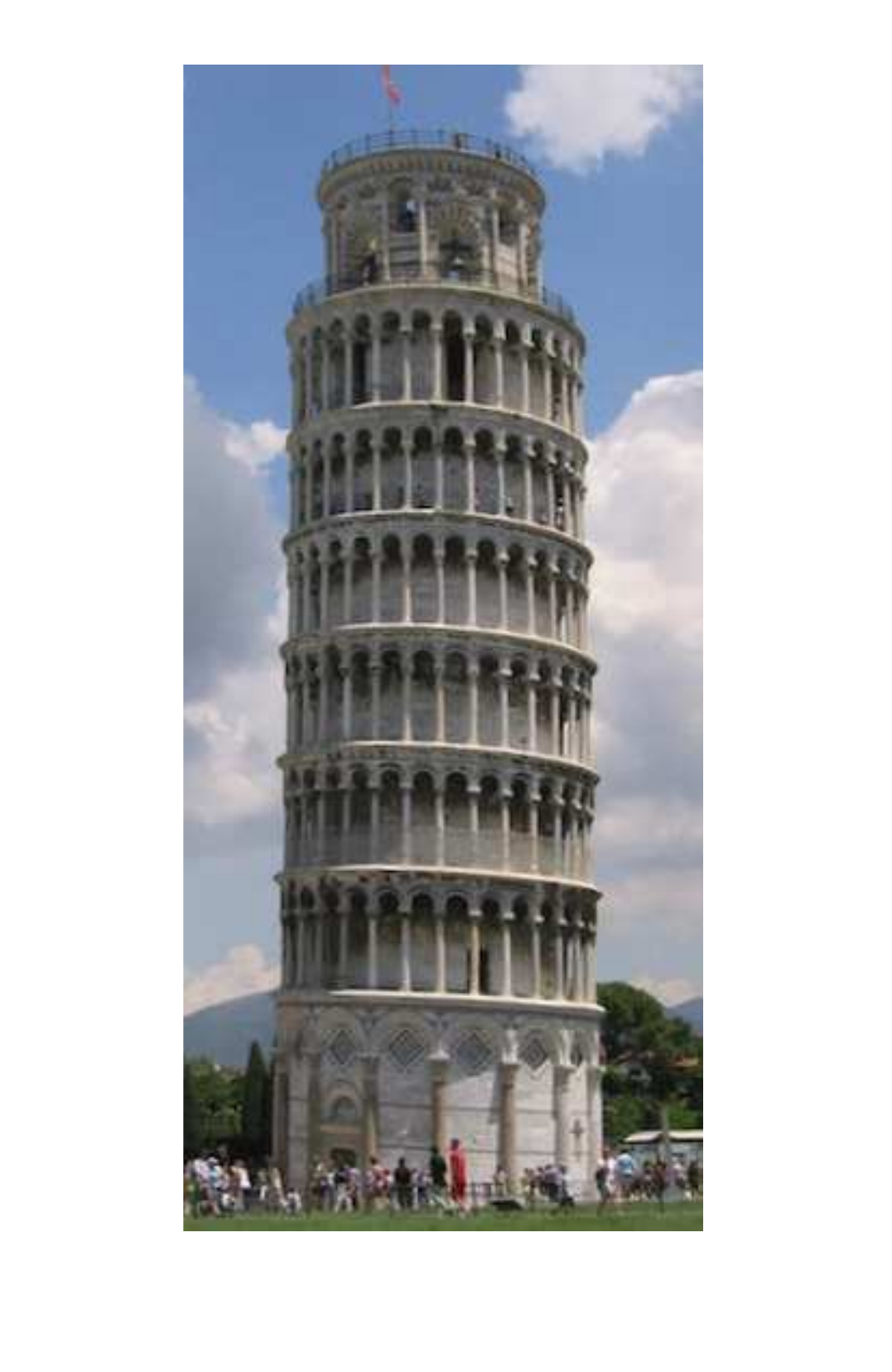}\\
  \includegraphics[scale=0.15]{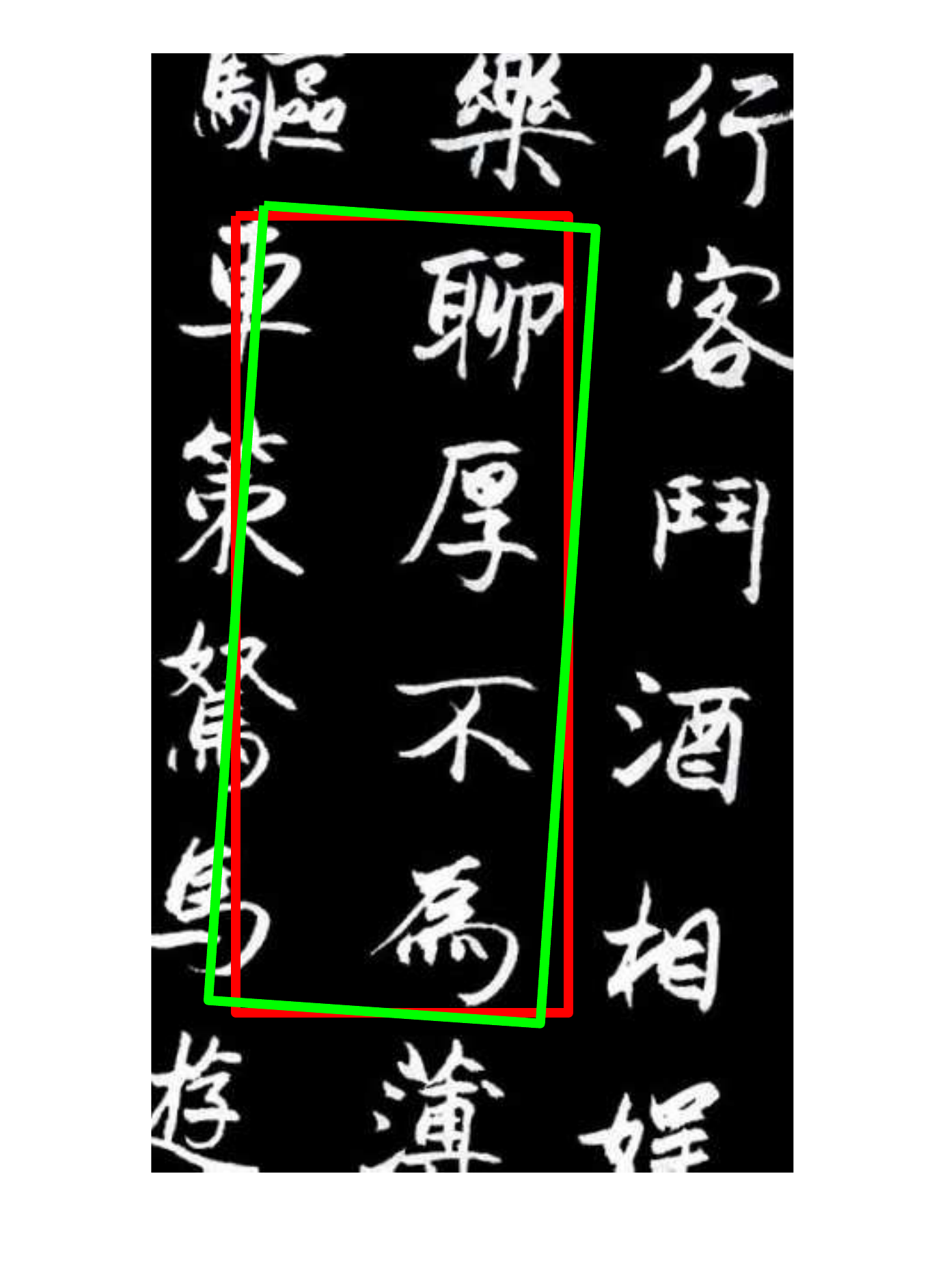}\hspace{-0.2cm}
  \includegraphics[scale=0.14]{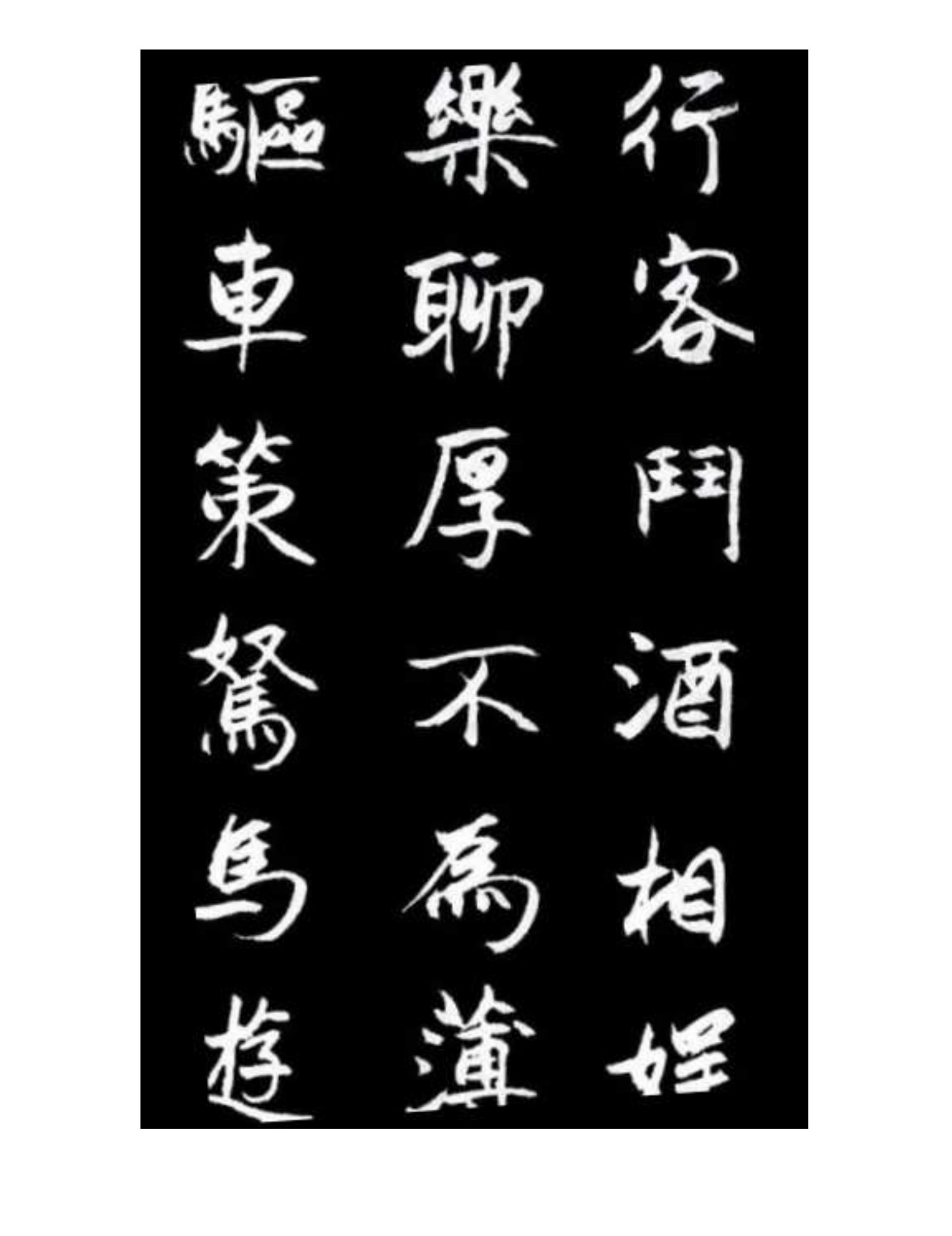}\hspace{-0.2cm}
  \includegraphics[scale=0.2] {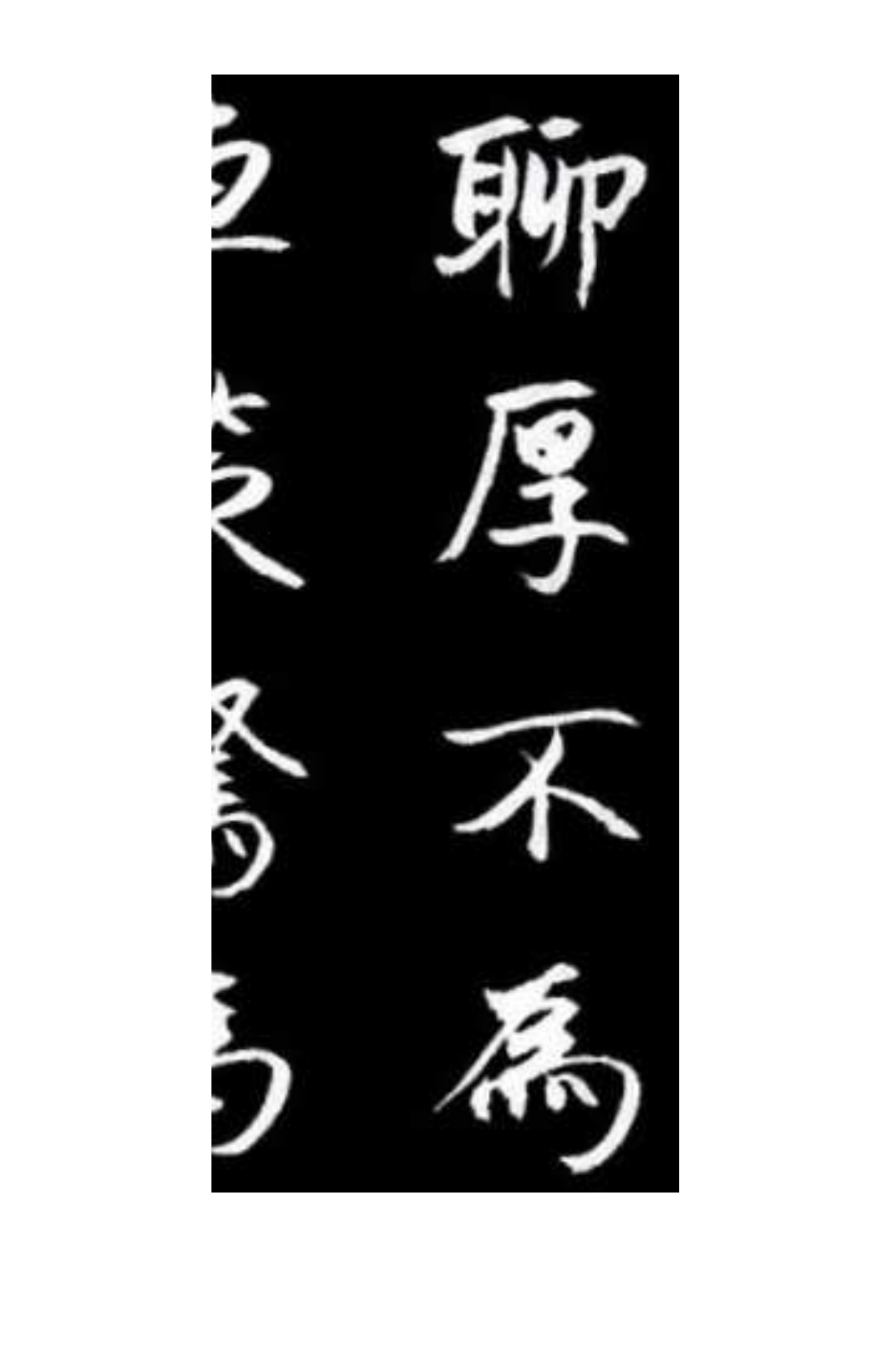}\hspace{-0.2cm}
  \includegraphics[scale=0.2] {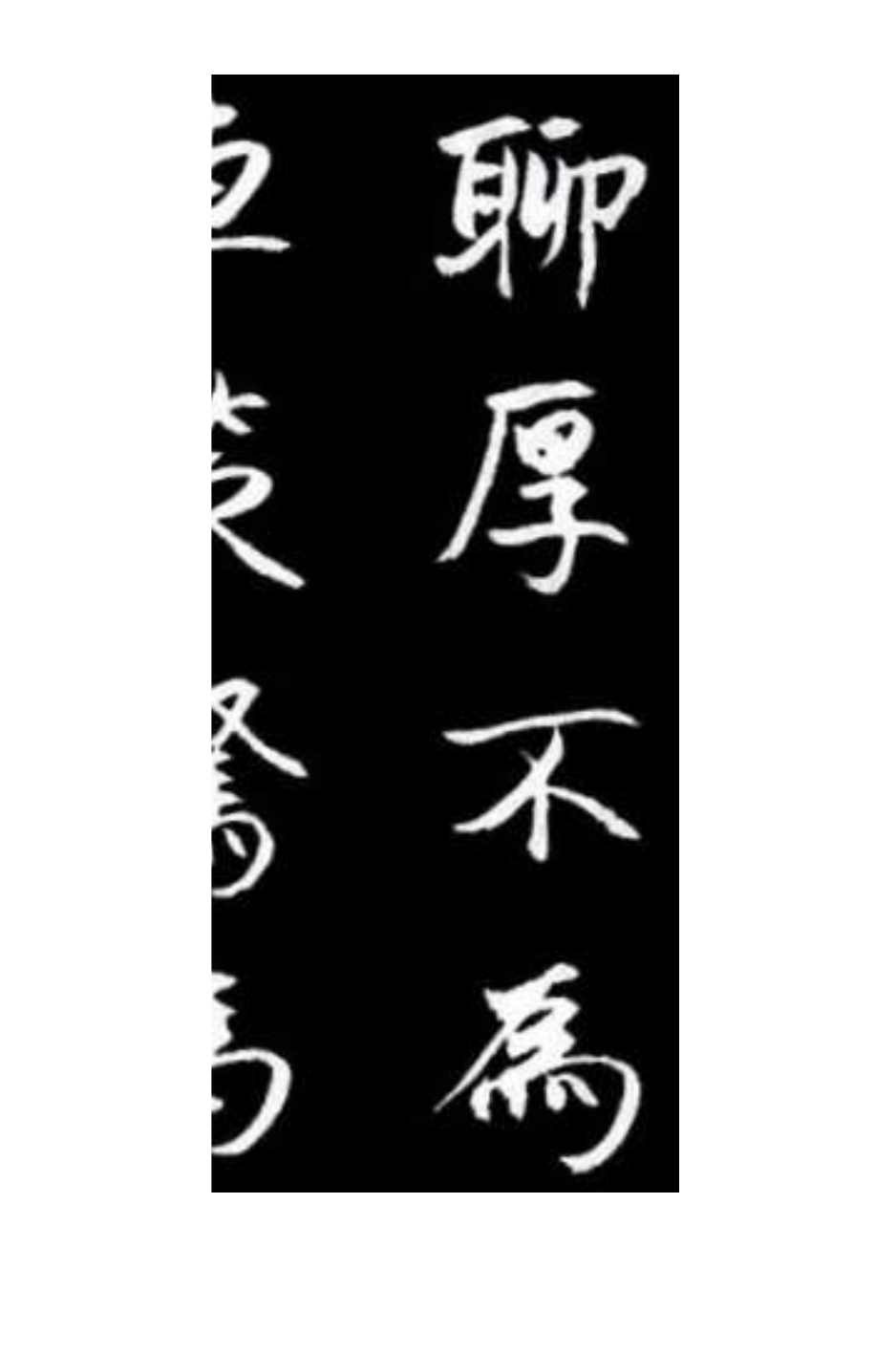}\\
  \includegraphics[scale=0.17]{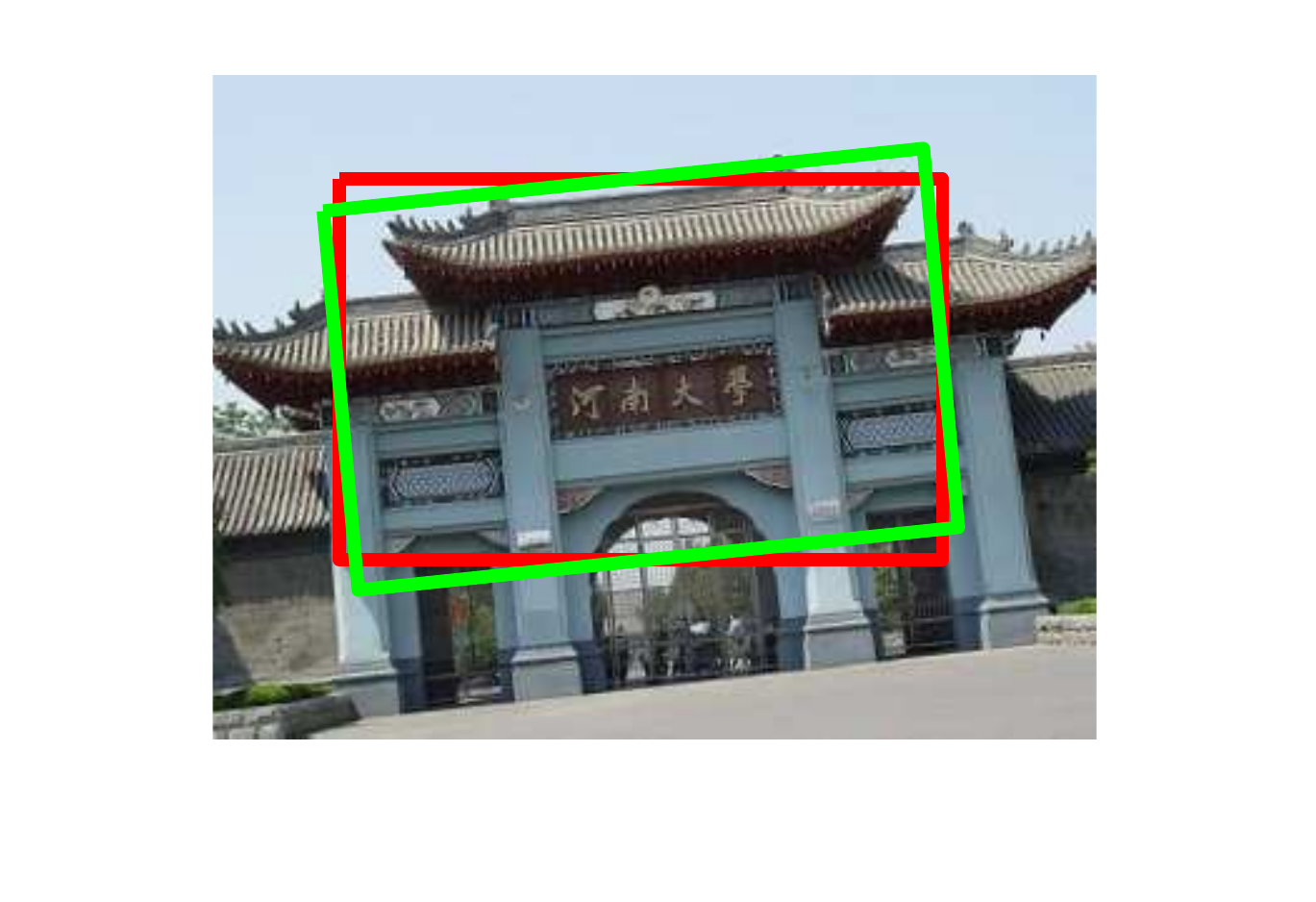}\hspace{-0.4cm}
  \includegraphics[scale=0.14]{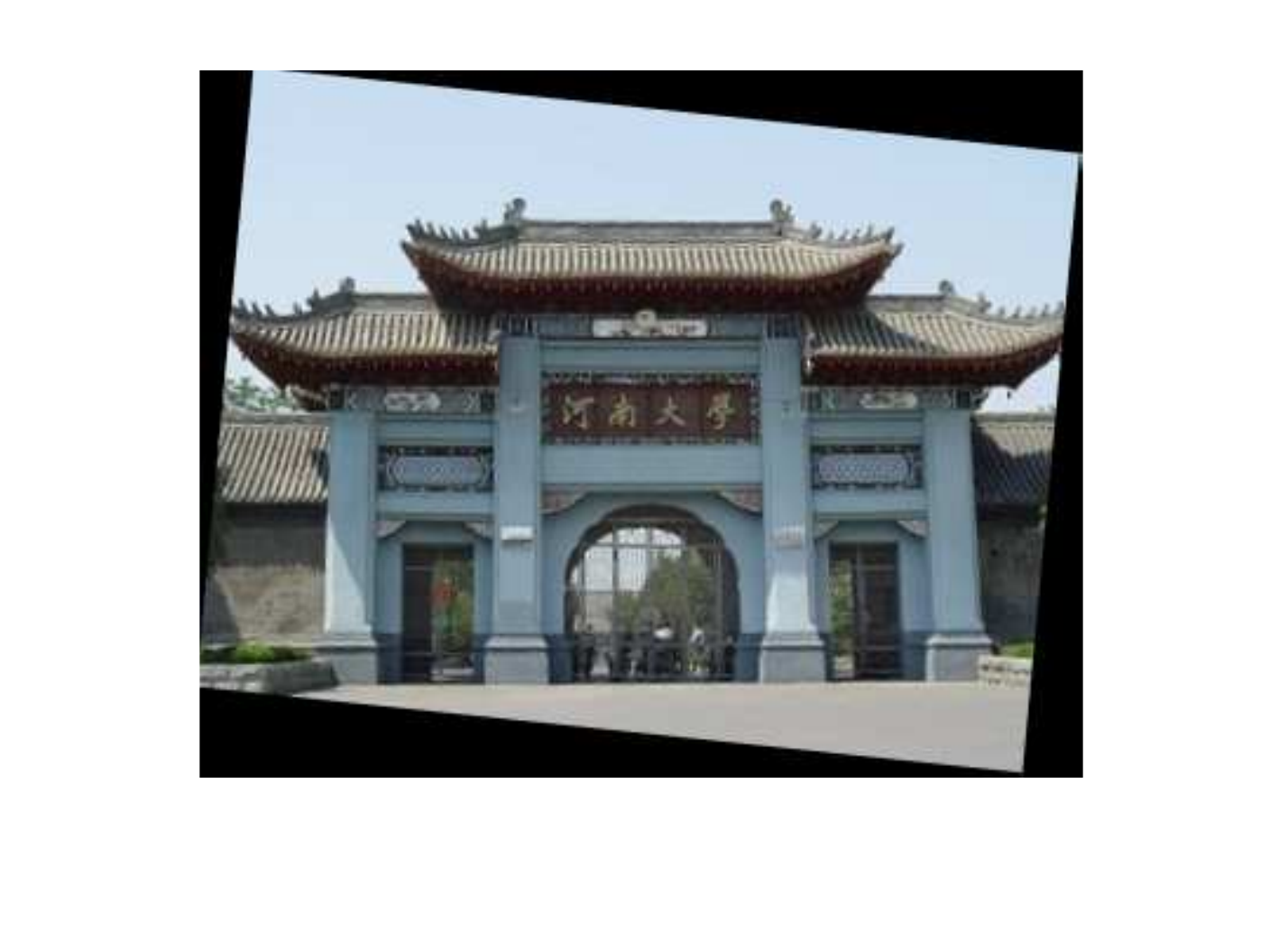}\hspace{-0.7cm}
  \includegraphics[scale=0.24]{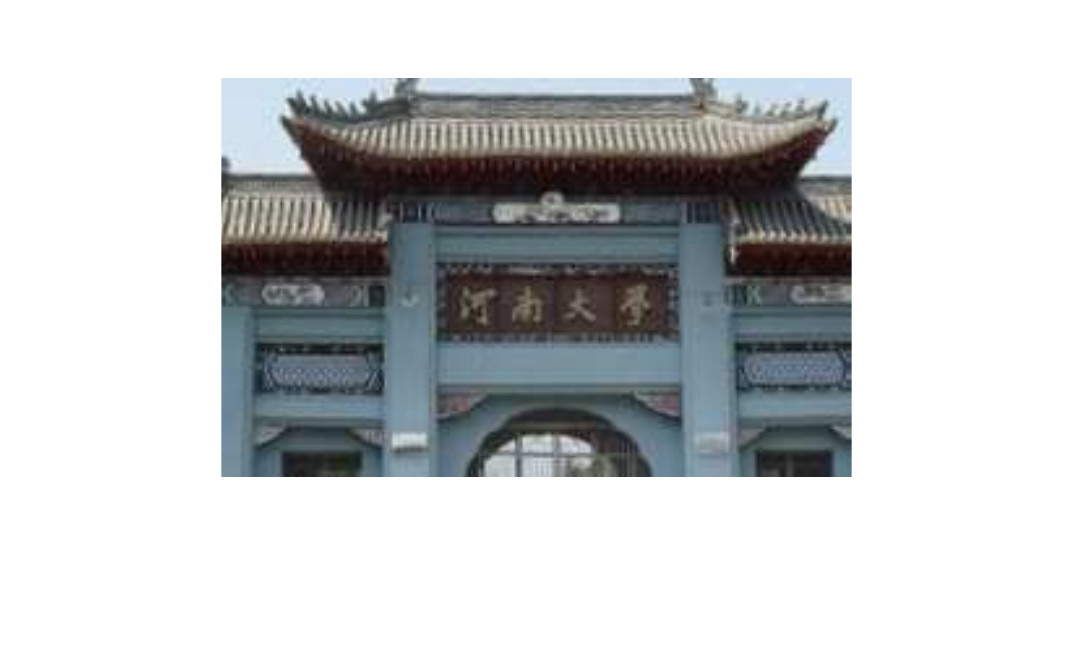}\hspace{-0.7cm}
  \includegraphics[scale=0.24]{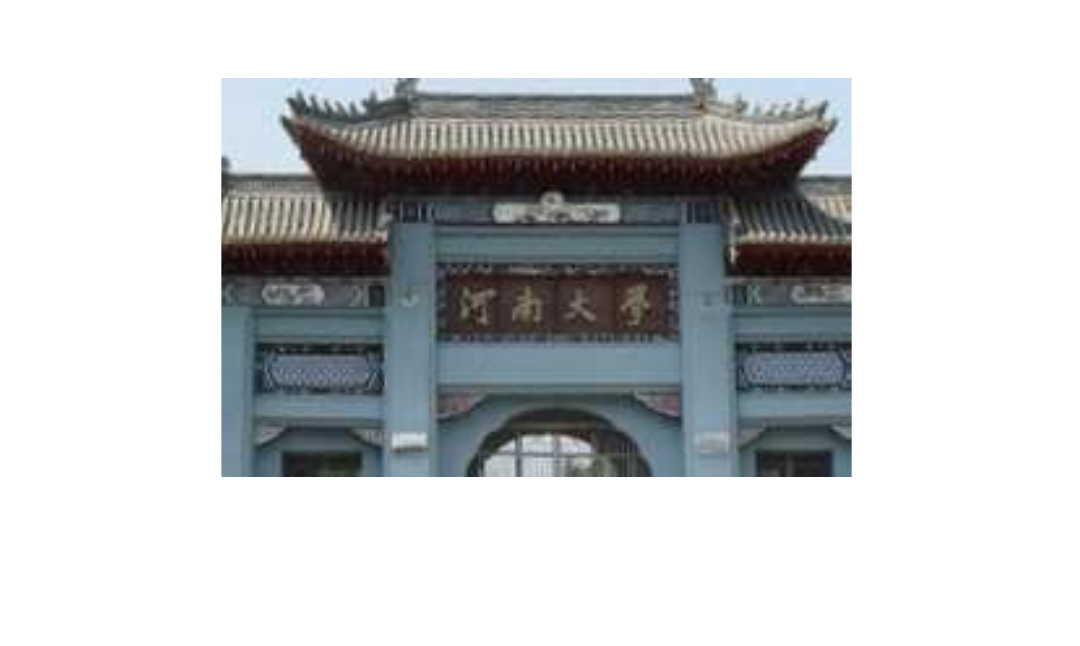}
\caption{{\scriptsize Low-rank textures rectified by algorithms sGS-ADMM and sGS-ADMM\_G. Left: the original images where red windows denote
the original input and green windows denote the deformed texture found by our methods. The remaining columns denote the rectified textures using the transforms found by
sGS-ADMM (middle right) and sGS-ADMM\_G (right), respectively.}}
\label{fig1}
\end{figure}

In the second test, we numerically evaluate the computational improvement of both algorithms based on some representative synthetic and natural low-rank patterns shown in the first row of Figure \ref{fig2}. For these images to be tested, we introduce a small deformation ( say rotation by $10^o$) to each texture, as shown in the third and forth rows of Figure \ref{fig2}. Then we examine whether these algorithms can converge to the correct solution under some random corruptions. For each external loop (Outer), we compare these methods with respect to the number of internal iterations (Iter), the computing time (Time),  the final Rank of the solution $X$ (Rank), the final $\|E\|_1$, and the final KKT residual (Tol). Detailed comparison results are listed in Table \ref{tab1}

\begin{figure}[htbp]
\vspace{-.0cm}\centering
  \includegraphics[scale=0.19] {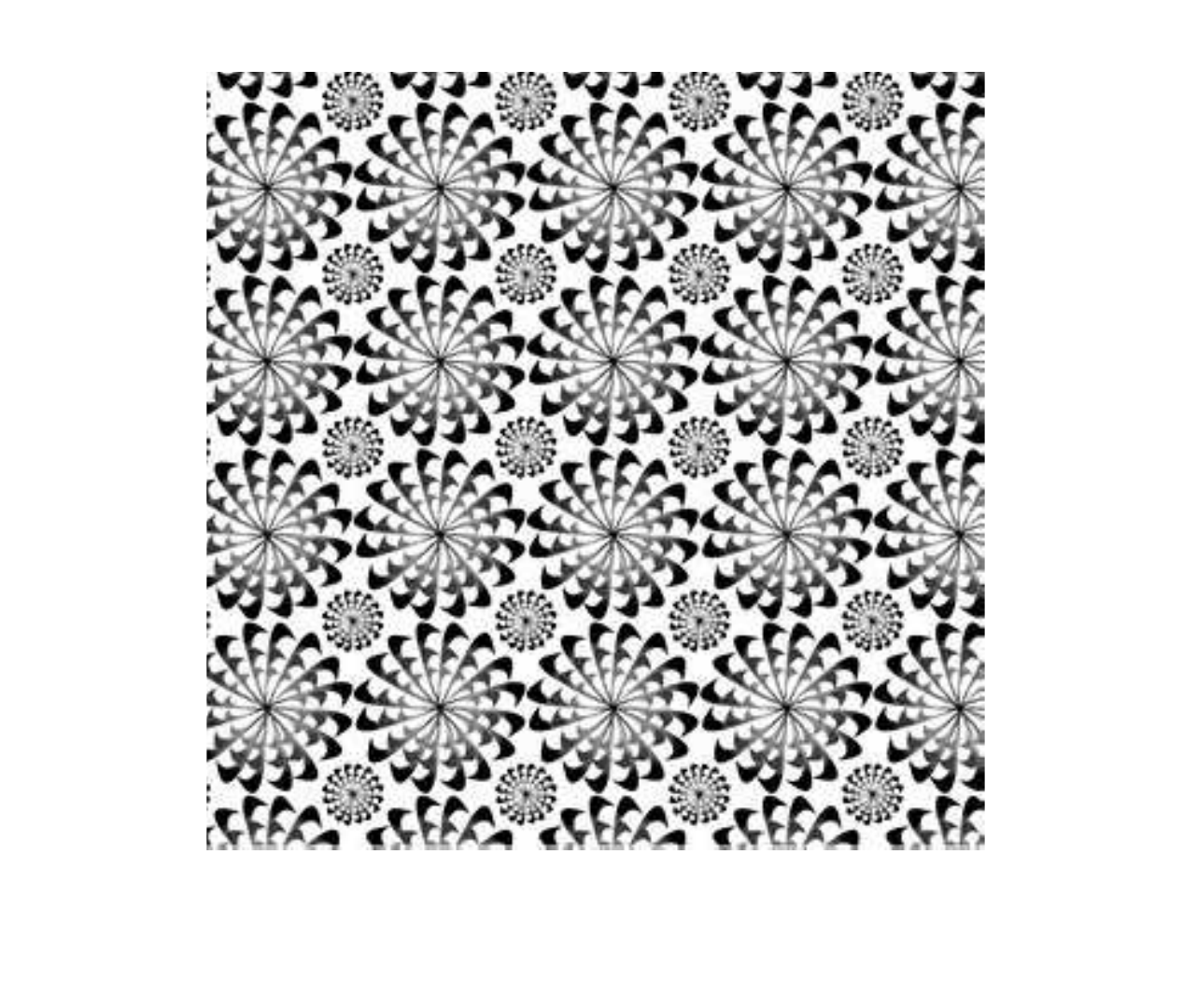}\hspace{-0.4cm}
  \includegraphics[scale=0.12] {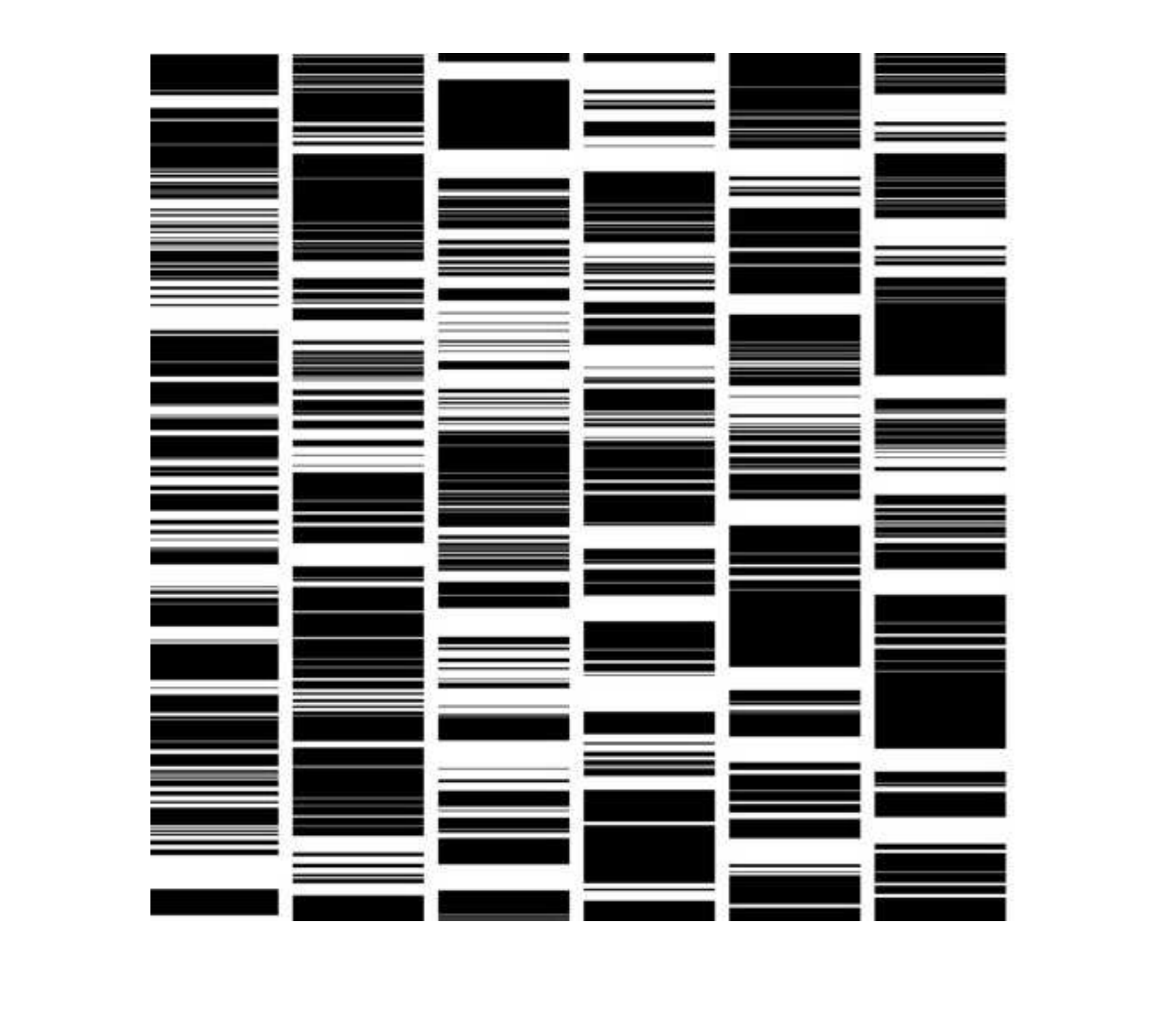}\hspace{-0.4cm}
  \includegraphics[scale=0.14] {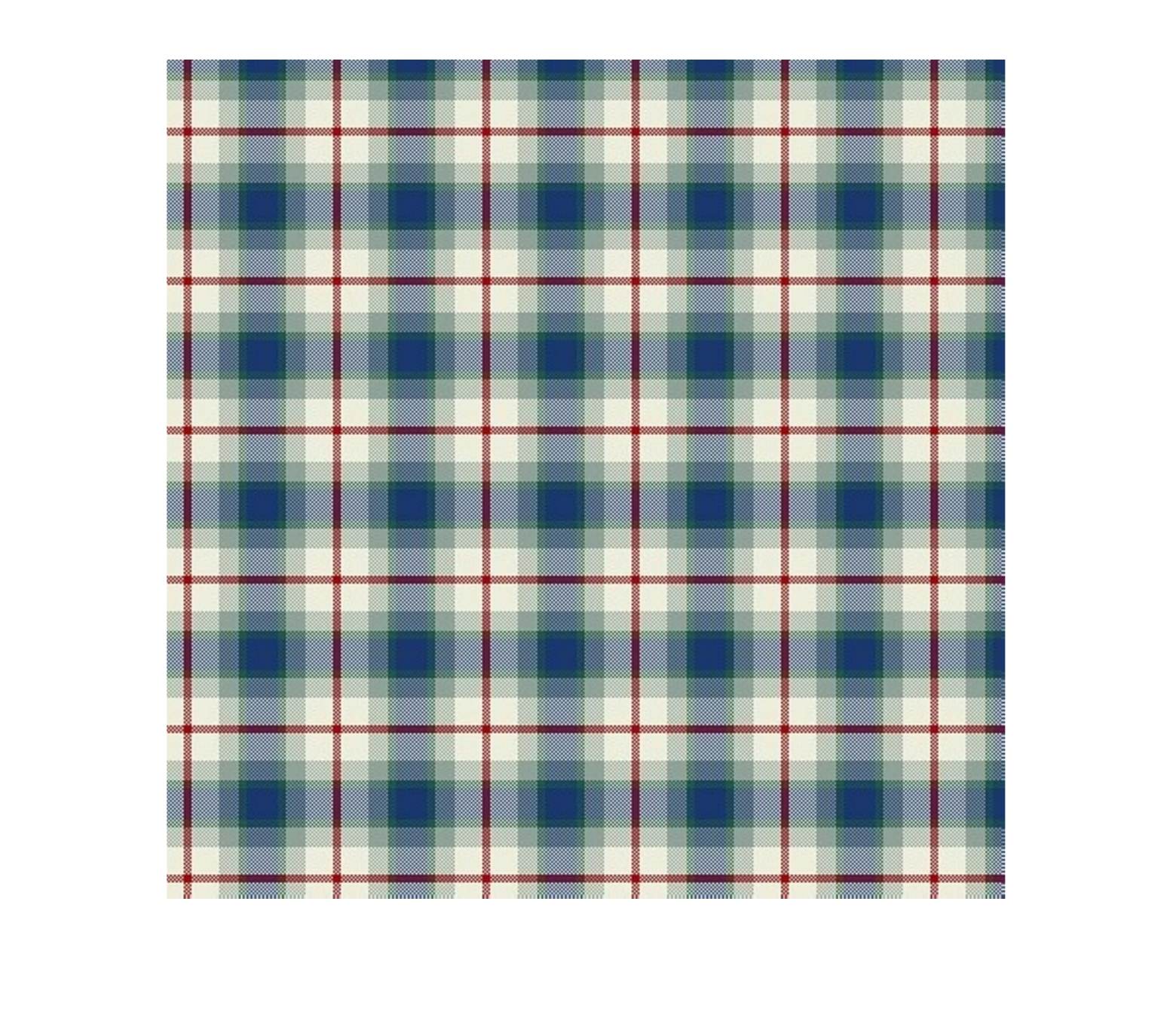}\hspace{-0.4cm}
  \includegraphics[scale=0.145]{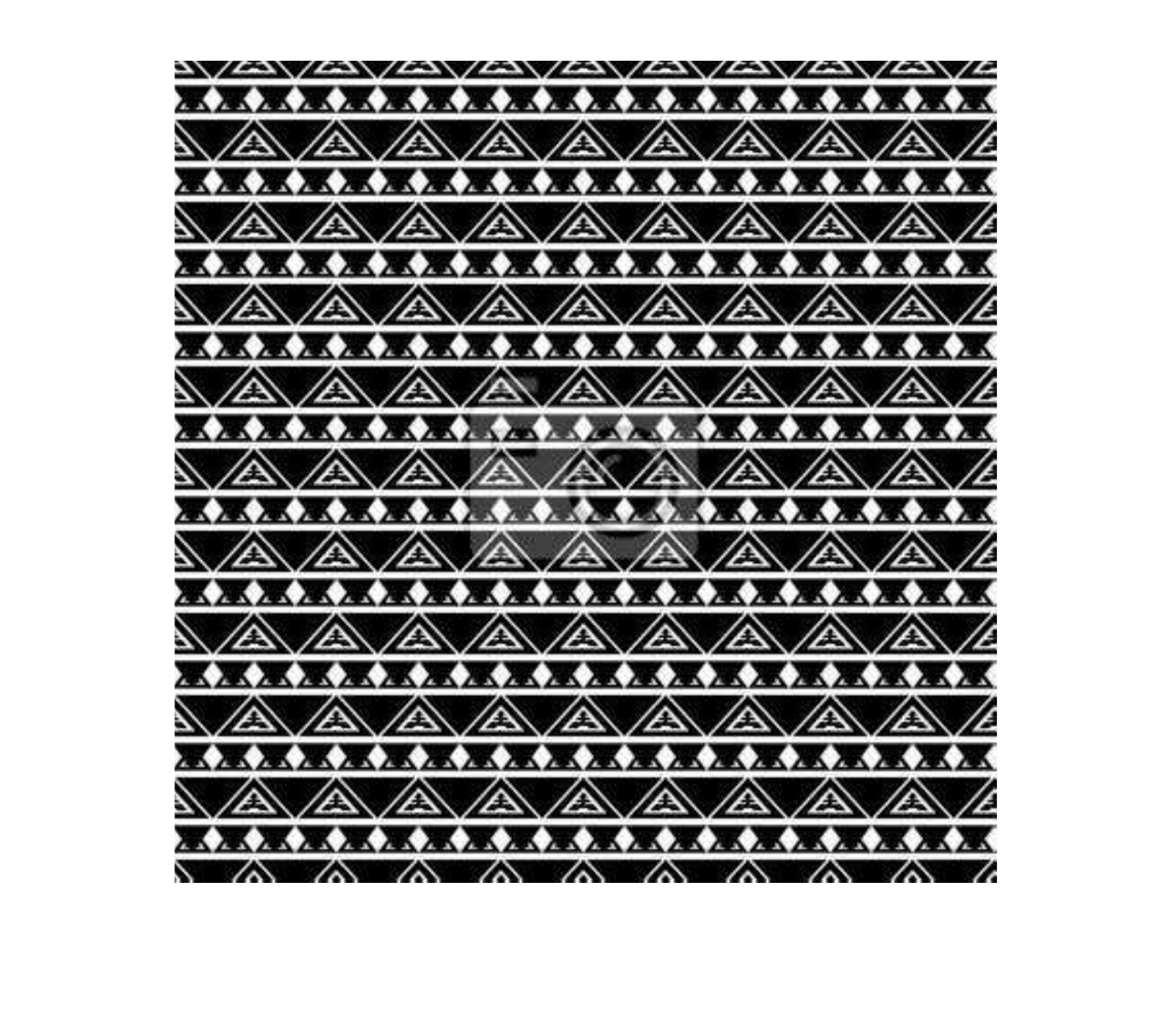}\\
  \vspace{-0.2cm}
  \includegraphics[scale=0.145]{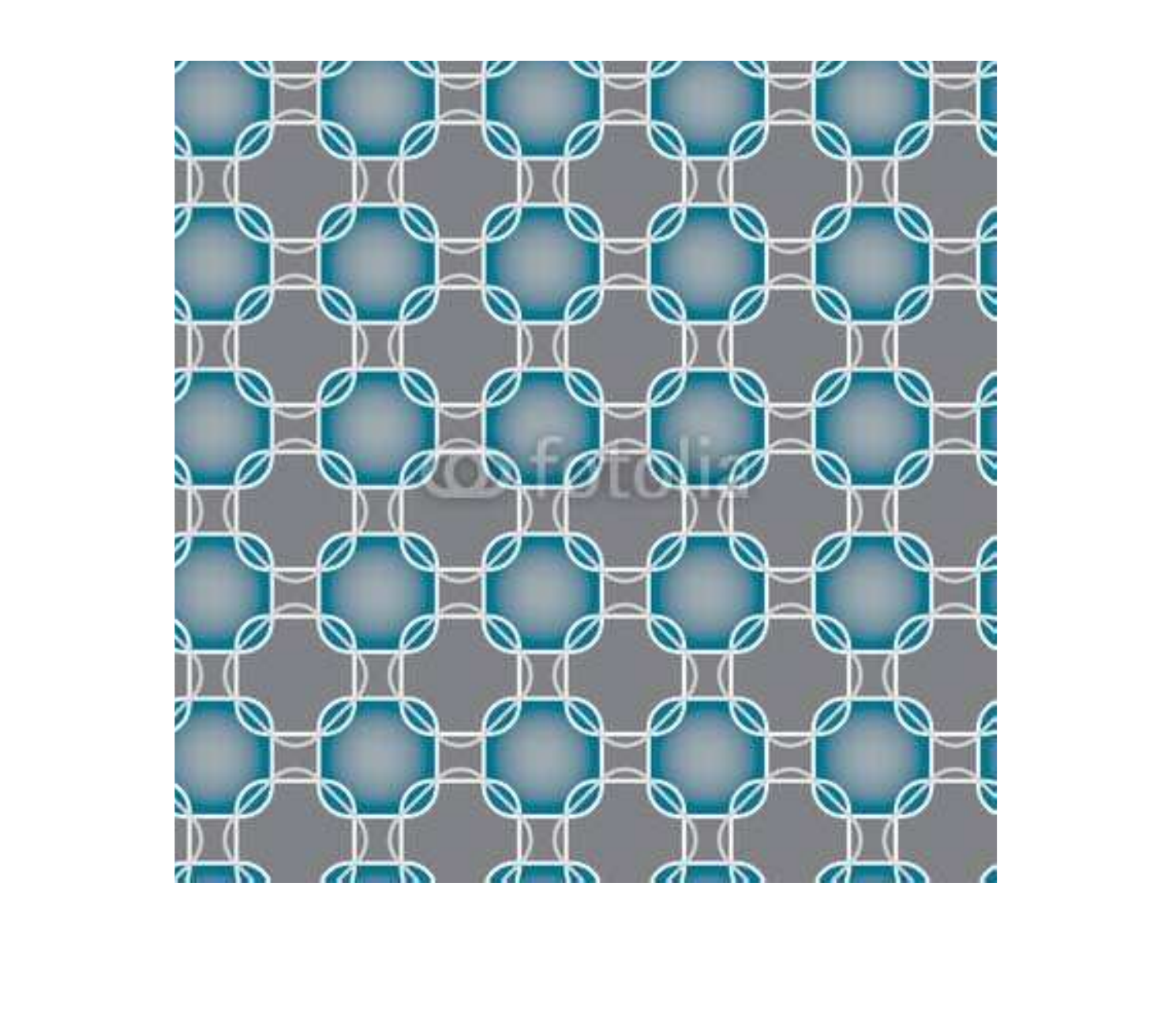}\hspace{-0.4cm}
  \includegraphics[scale=0.14] {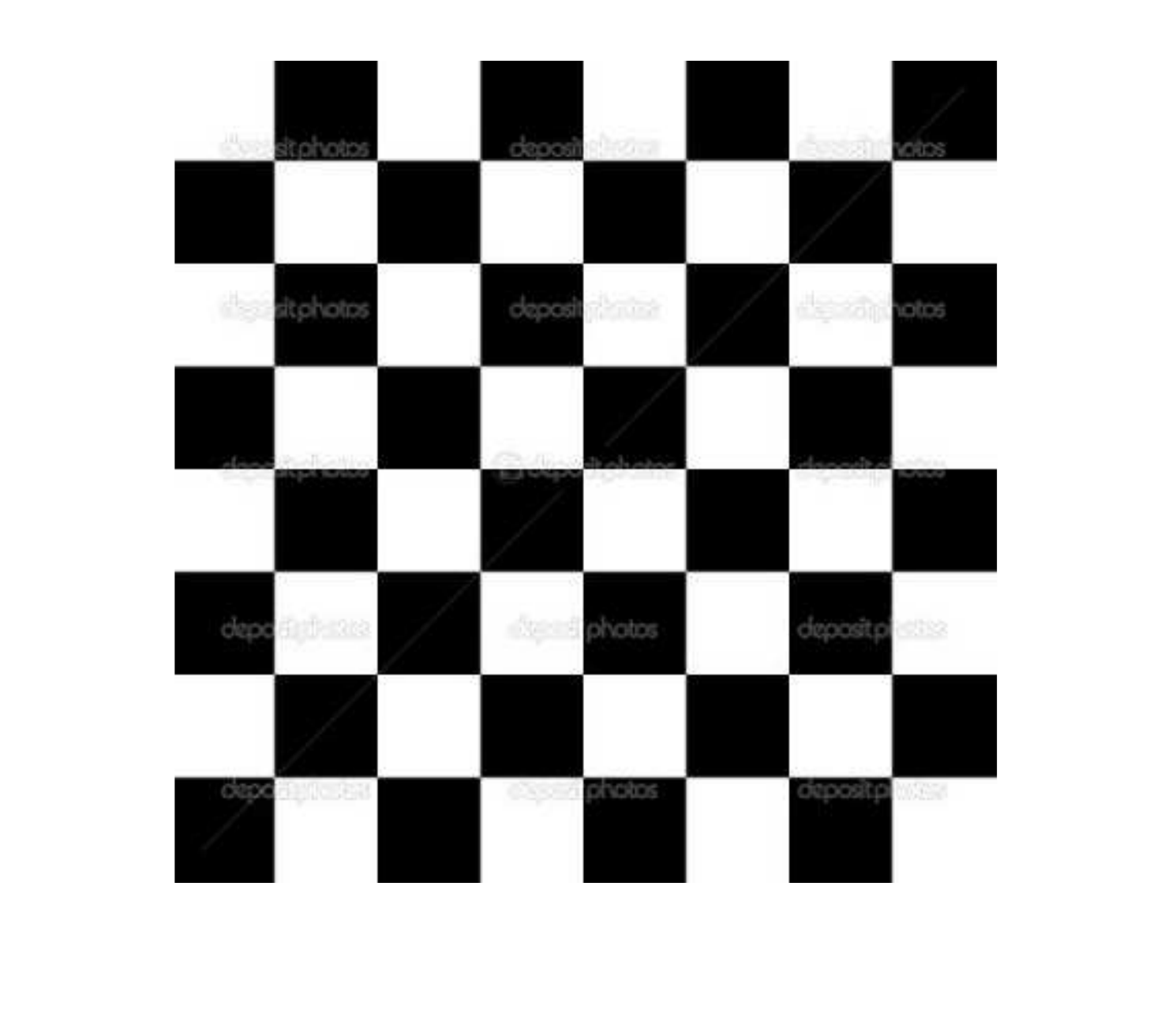}\hspace{-0.4cm}
  \includegraphics[scale=0.185]{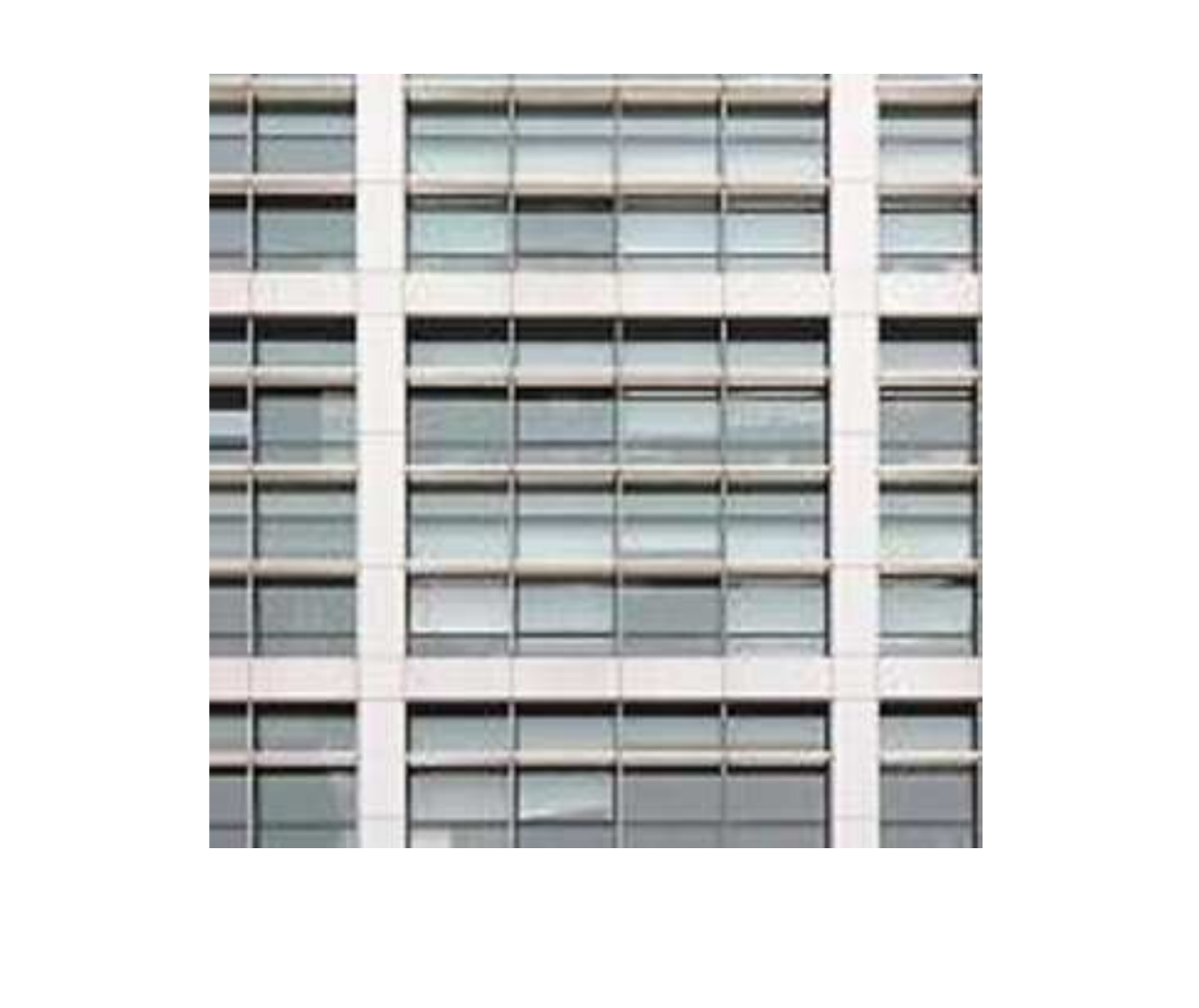}\hspace{-0.4cm}
  \includegraphics[scale=0.14] {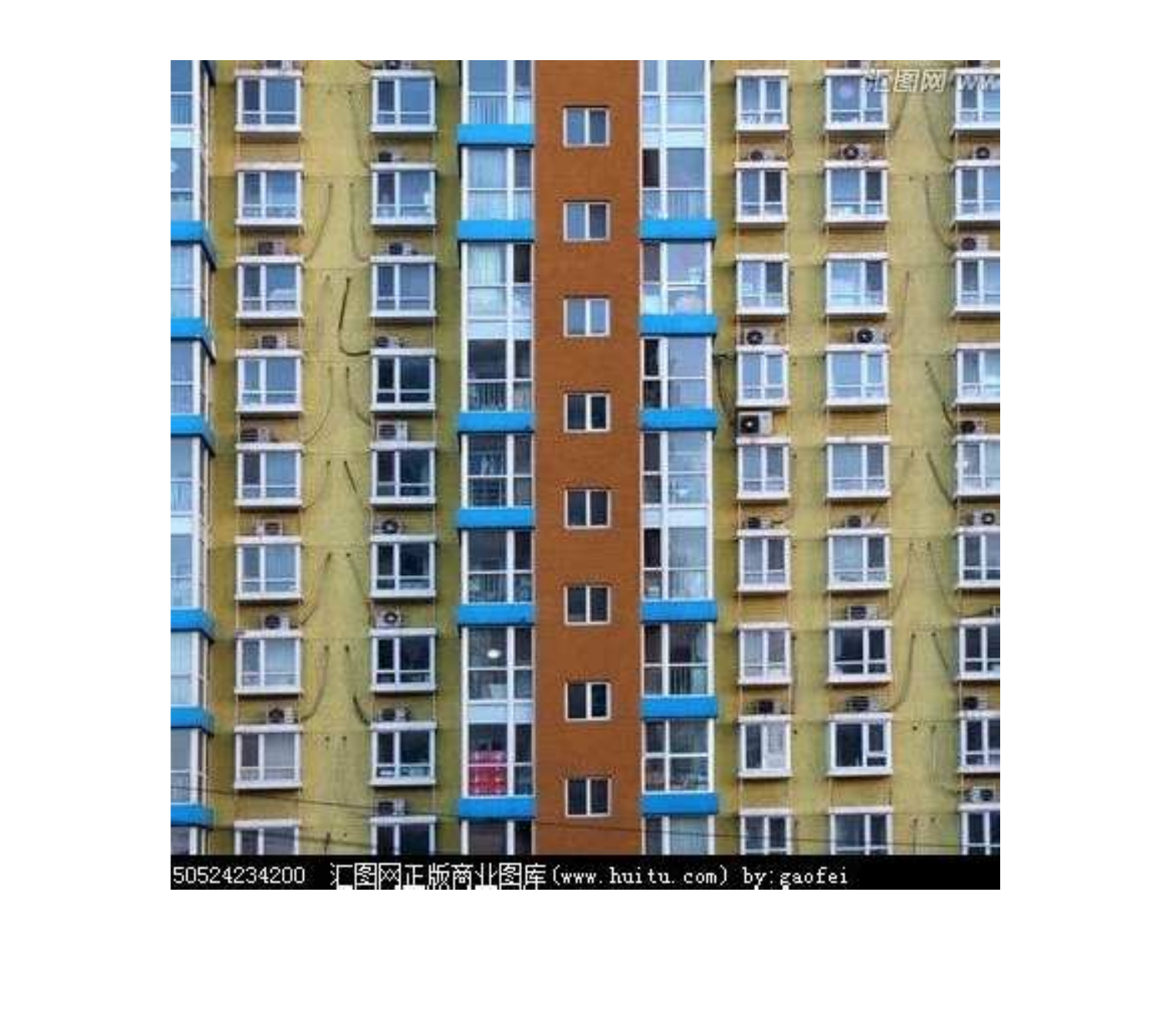}\\\hrule
  \vspace{0.2cm}
  \includegraphics[scale=0.21]{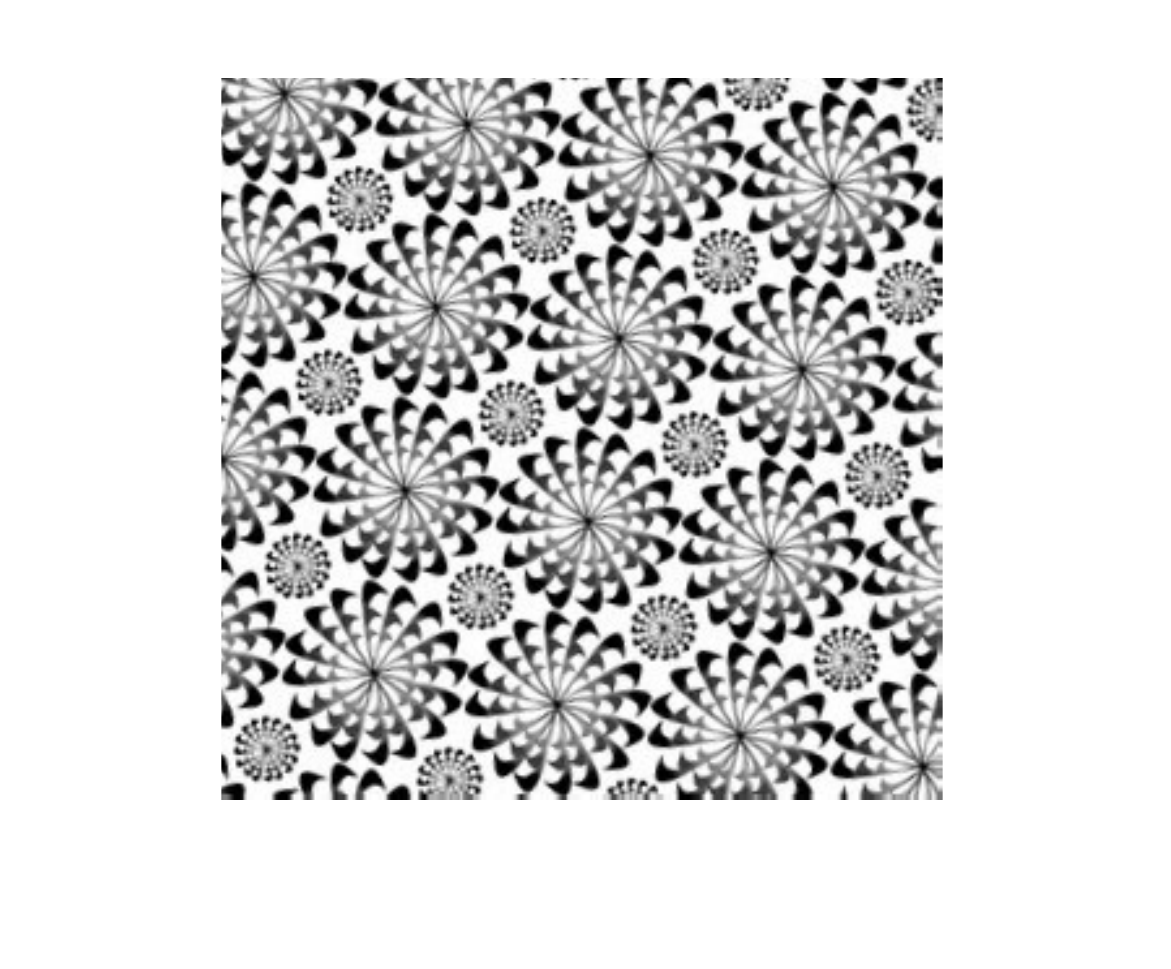}\hspace{-0.4cm}
  \includegraphics[scale=0.14]{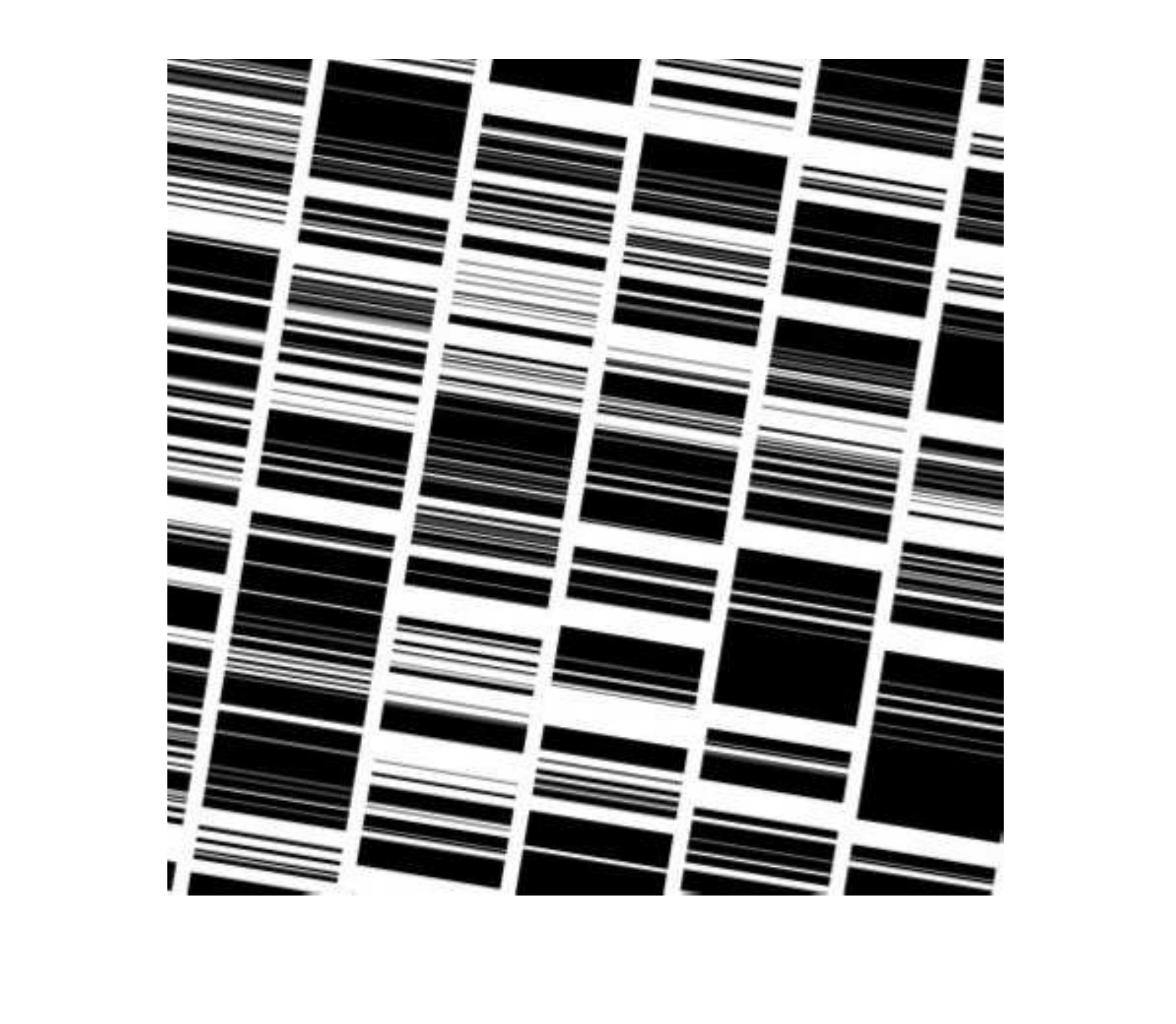}\hspace{-0.4cm}
  \includegraphics[scale=0.15]{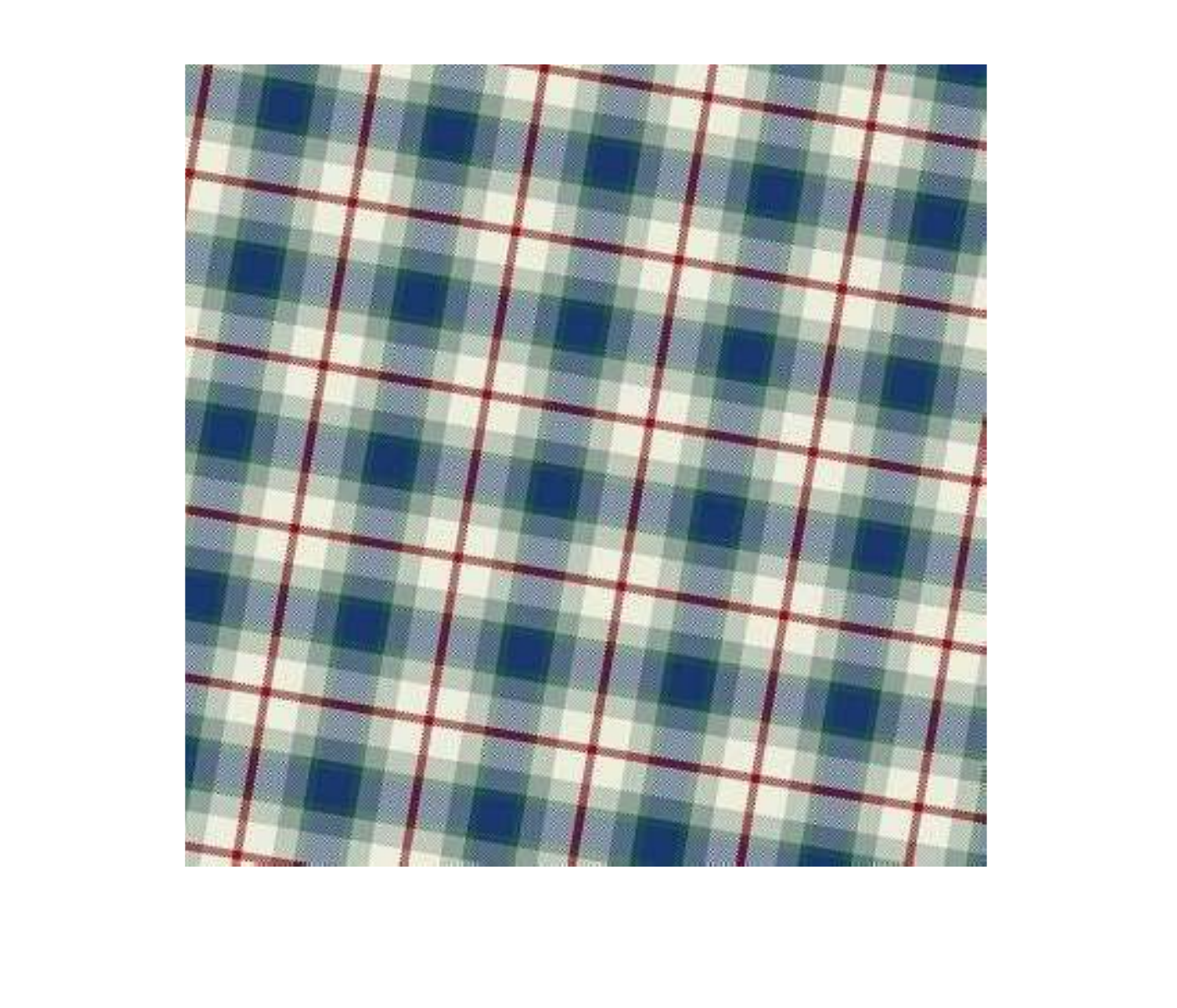}\hspace{-0.4cm}
  \includegraphics[scale=0.16]{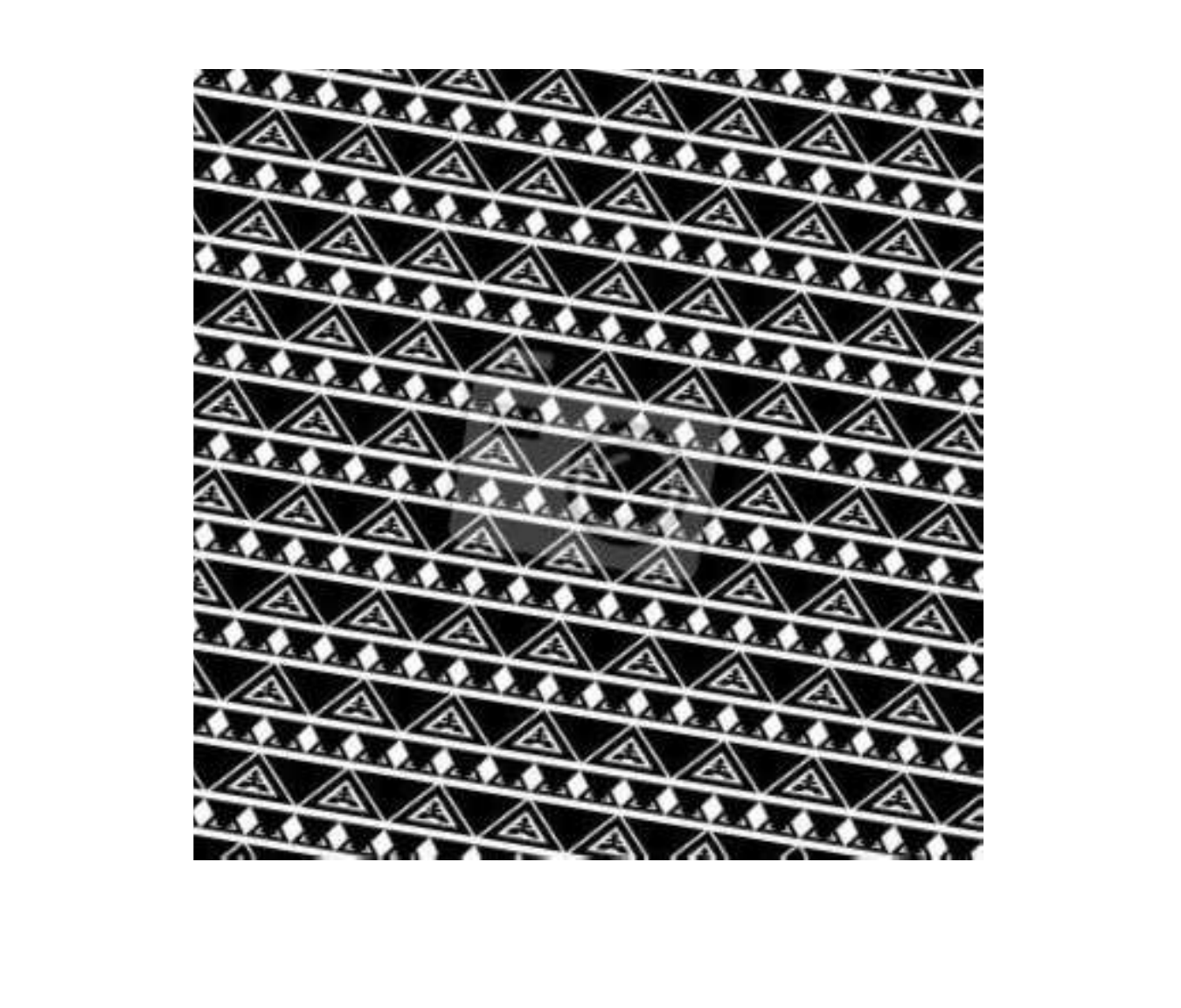}\\
  \vspace{-0.2cm}
  \includegraphics[scale=0.16] {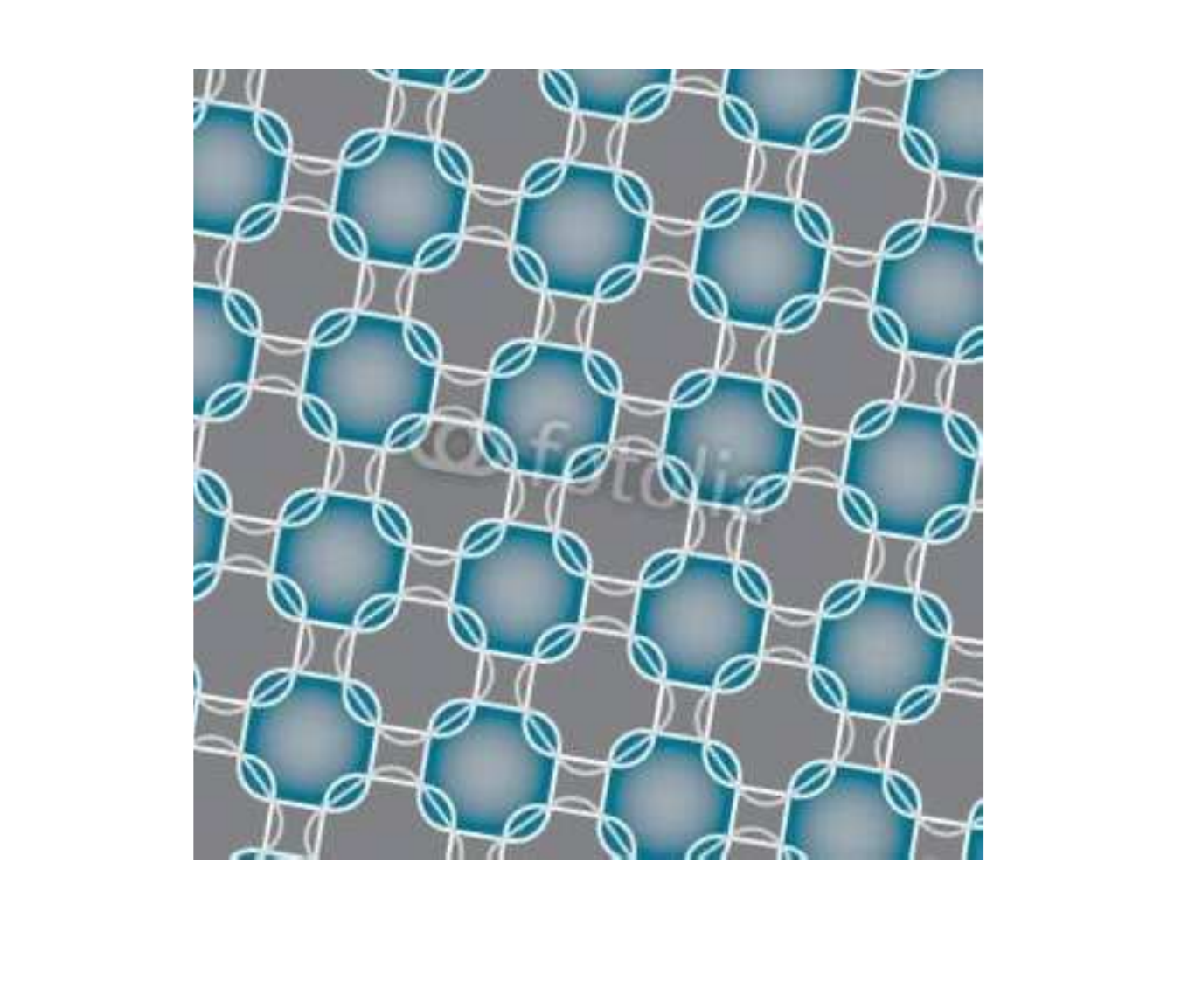}\hspace{-0.4cm}
  \includegraphics[scale=0.16] {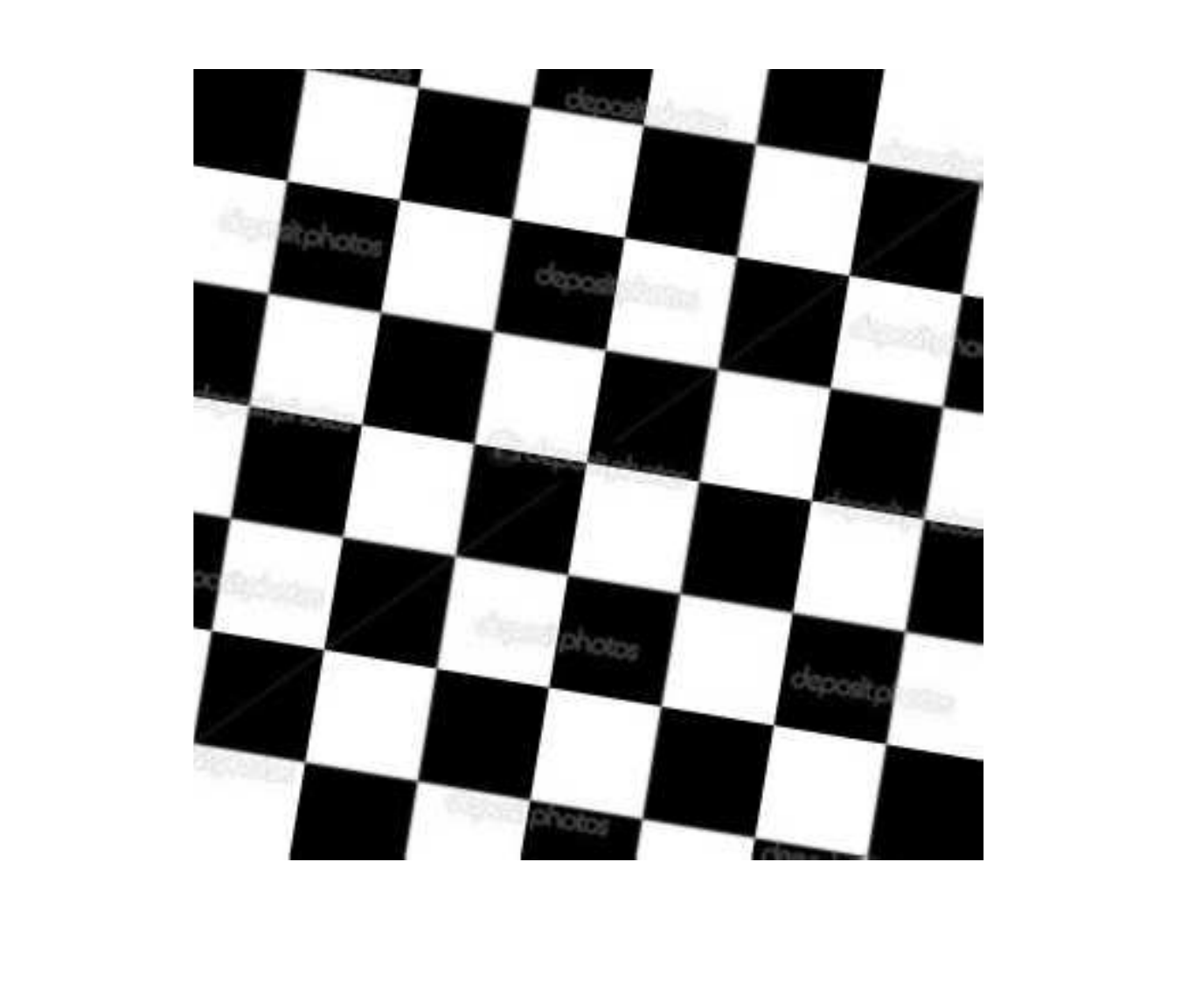}\hspace{-0.4cm}
  \includegraphics[scale=0.195]{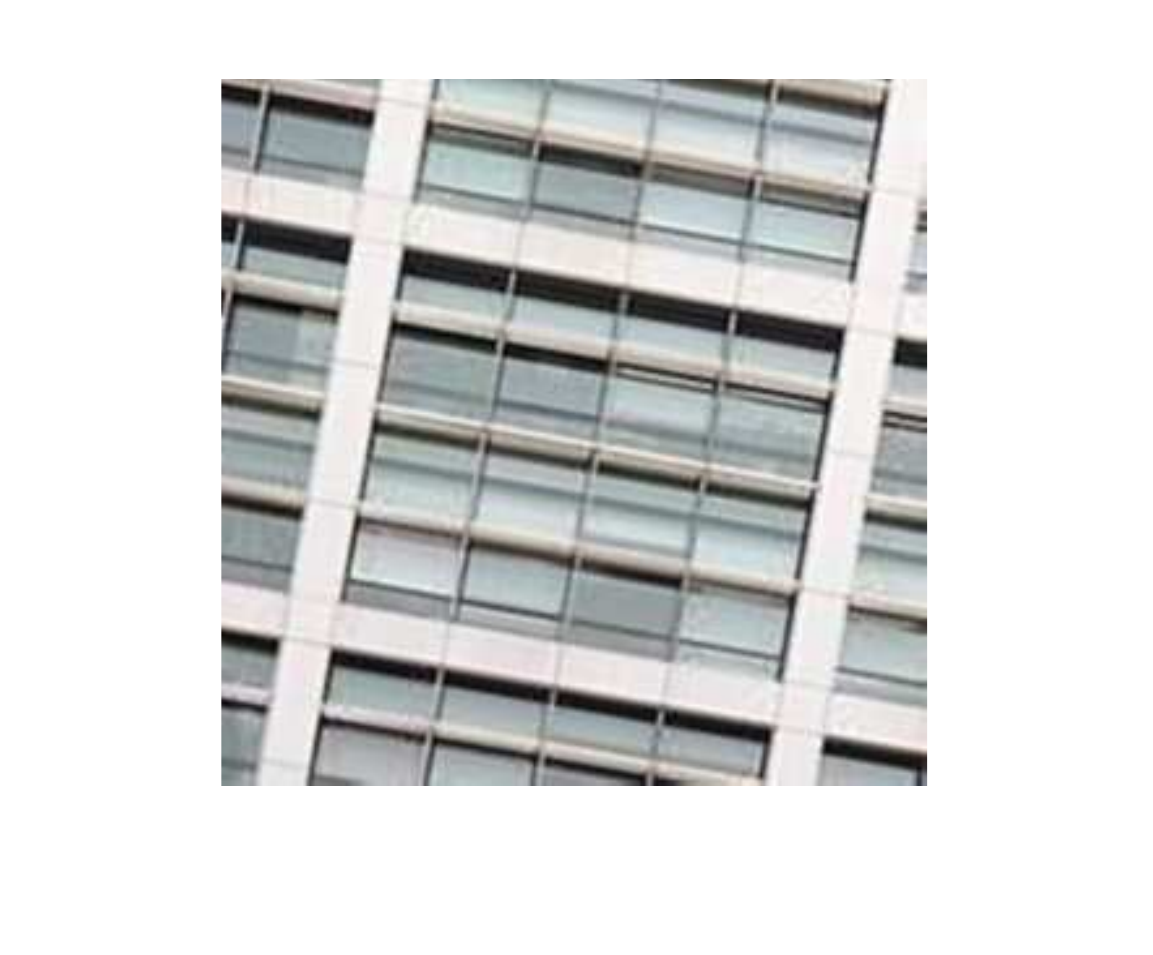}\hspace{-0.4cm}
  \includegraphics[scale=0.16] {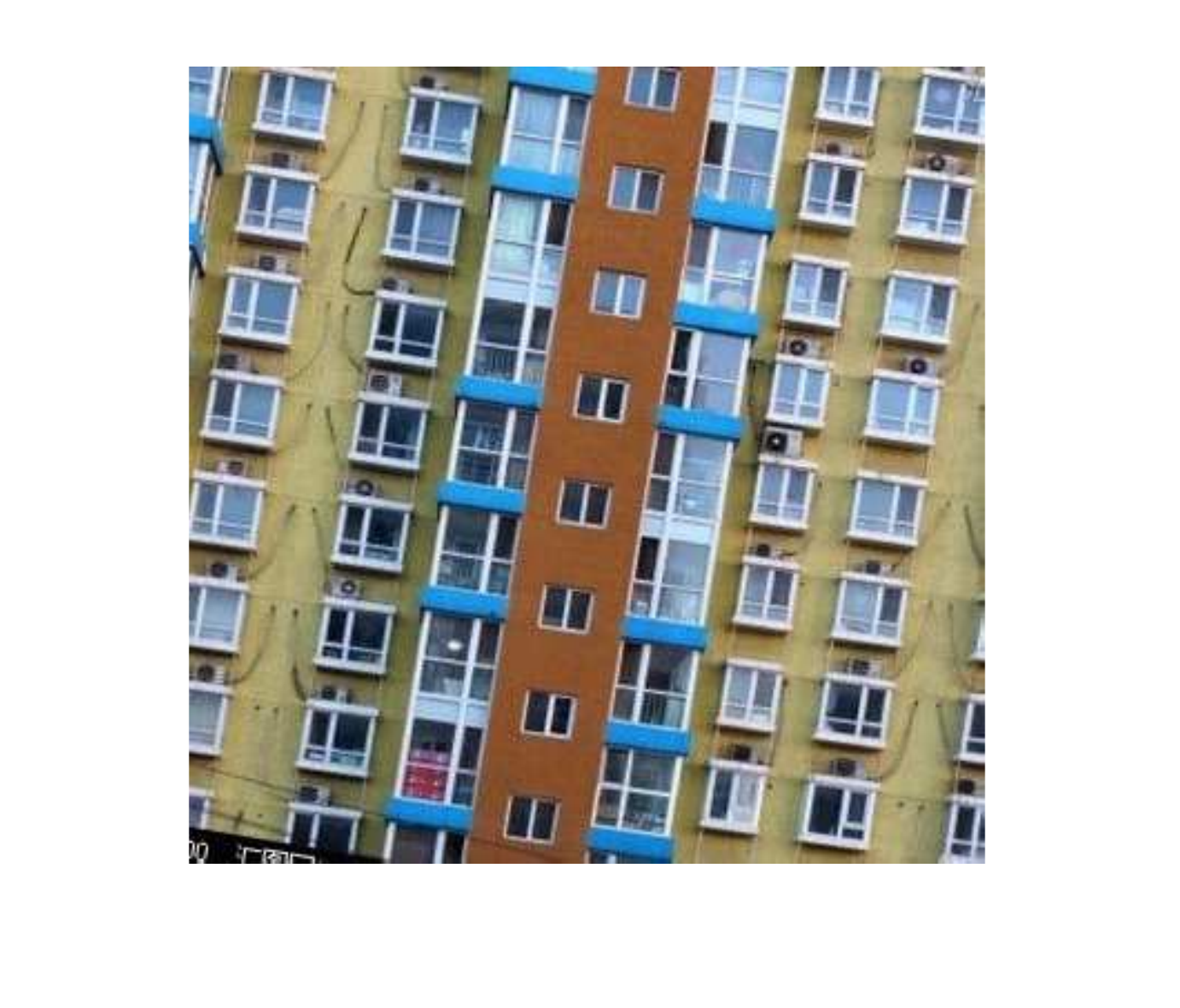}\\ \hrule
  \vspace{0.2cm}
  \includegraphics[scale=0.19]{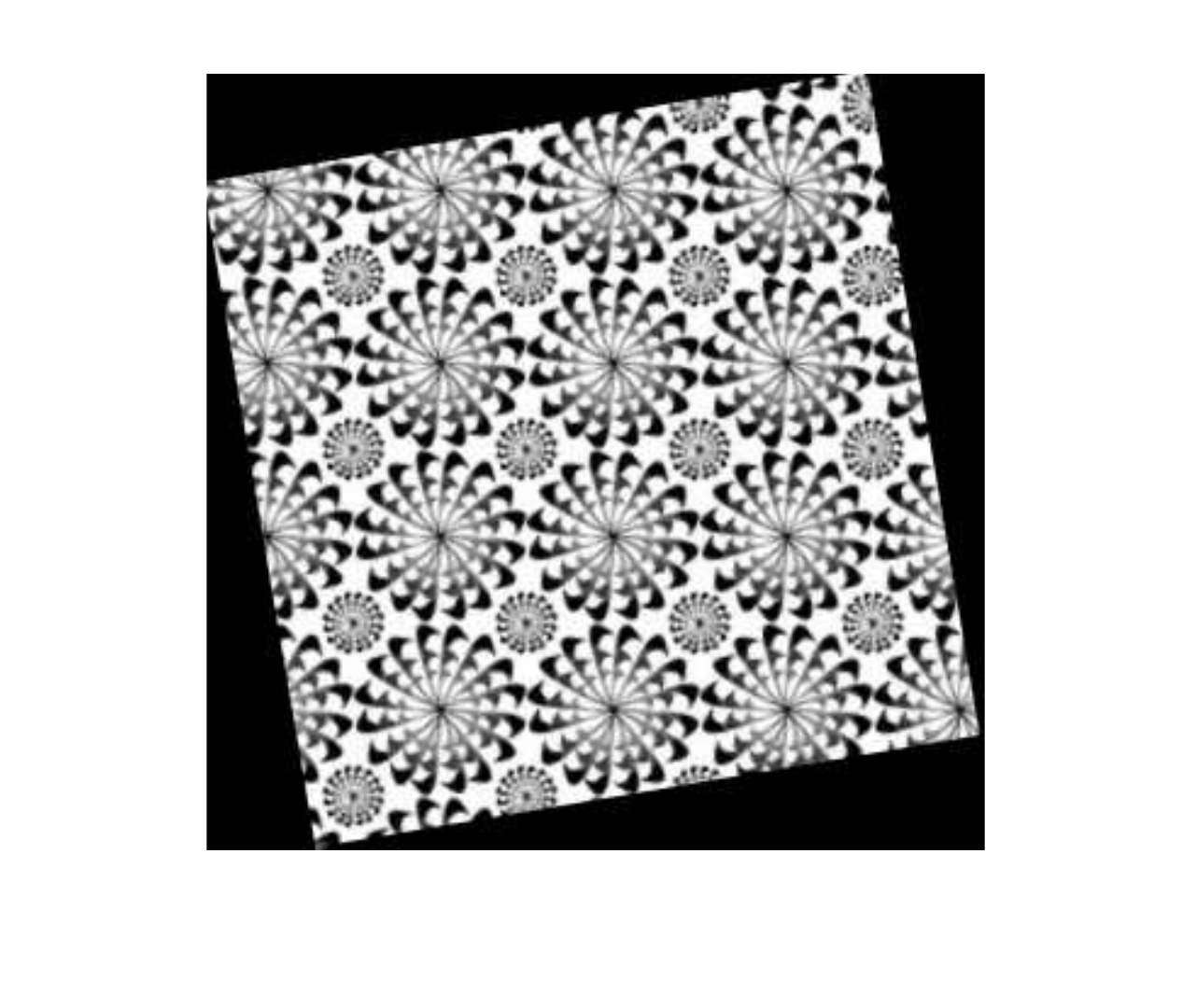}\hspace{-0.4cm}
  \includegraphics[scale=0.12]{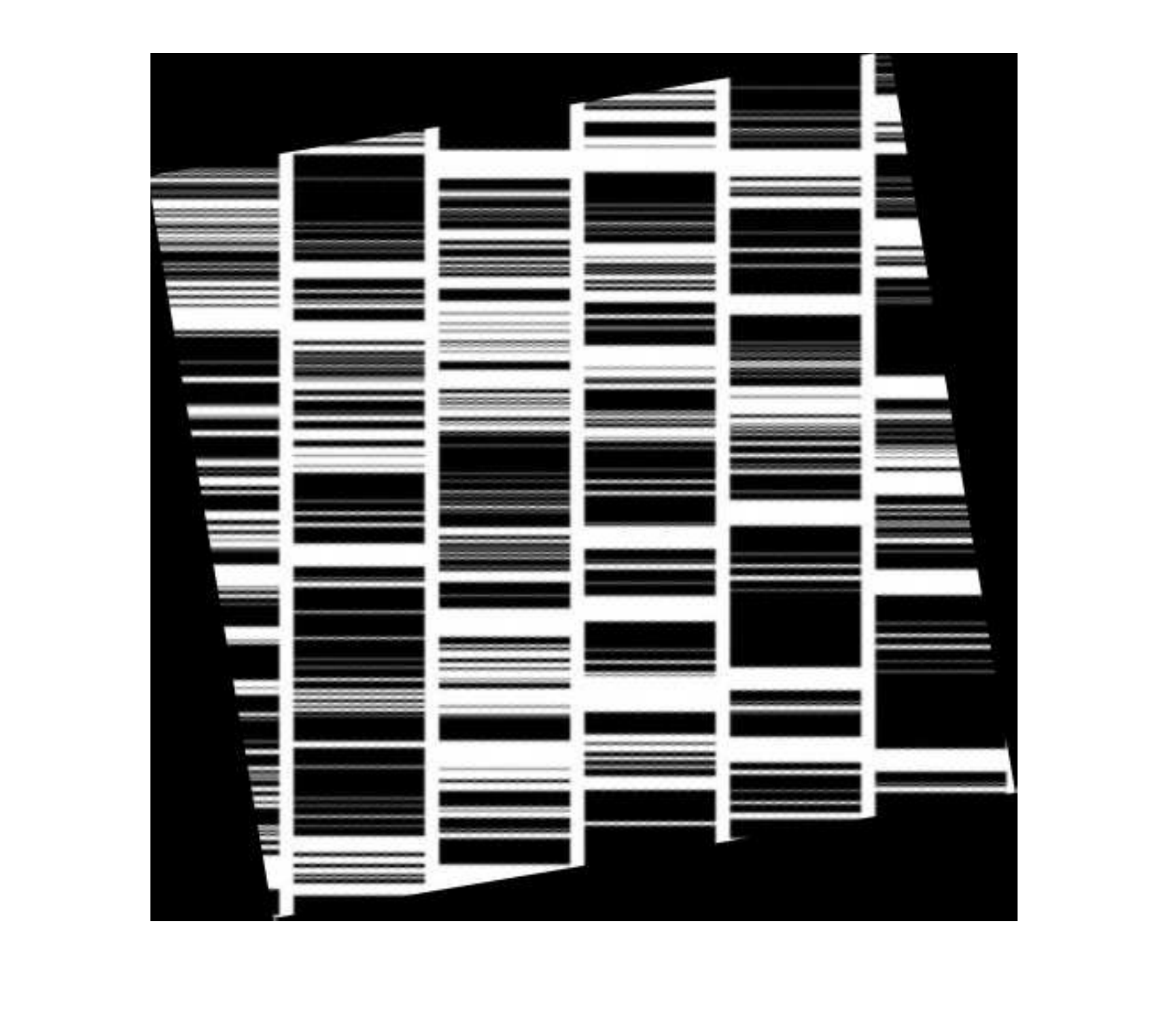}\hspace{-0.4cm}
  \includegraphics[scale=0.13]{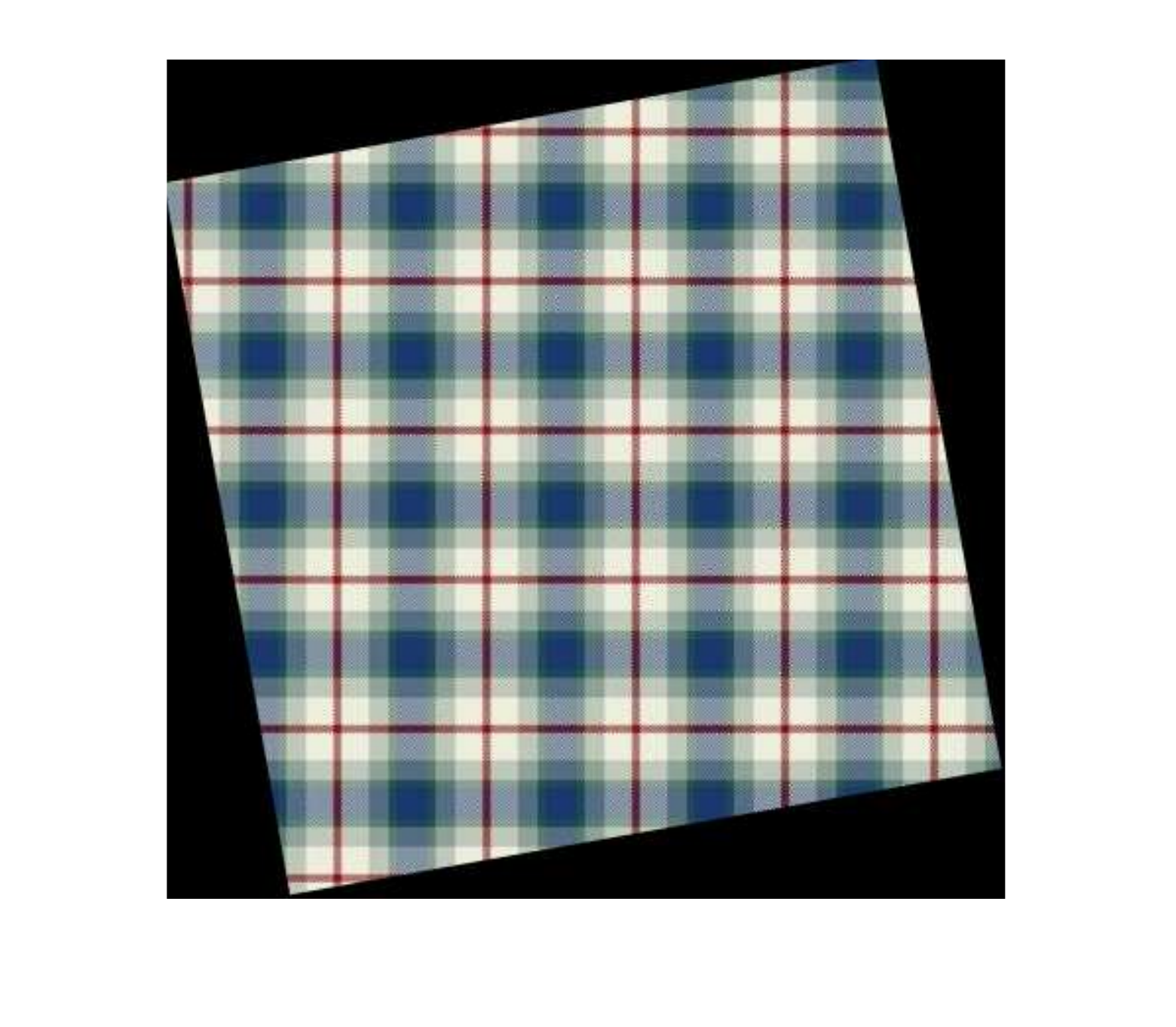}\hspace{-0.4cm}
  \includegraphics[scale=0.14]{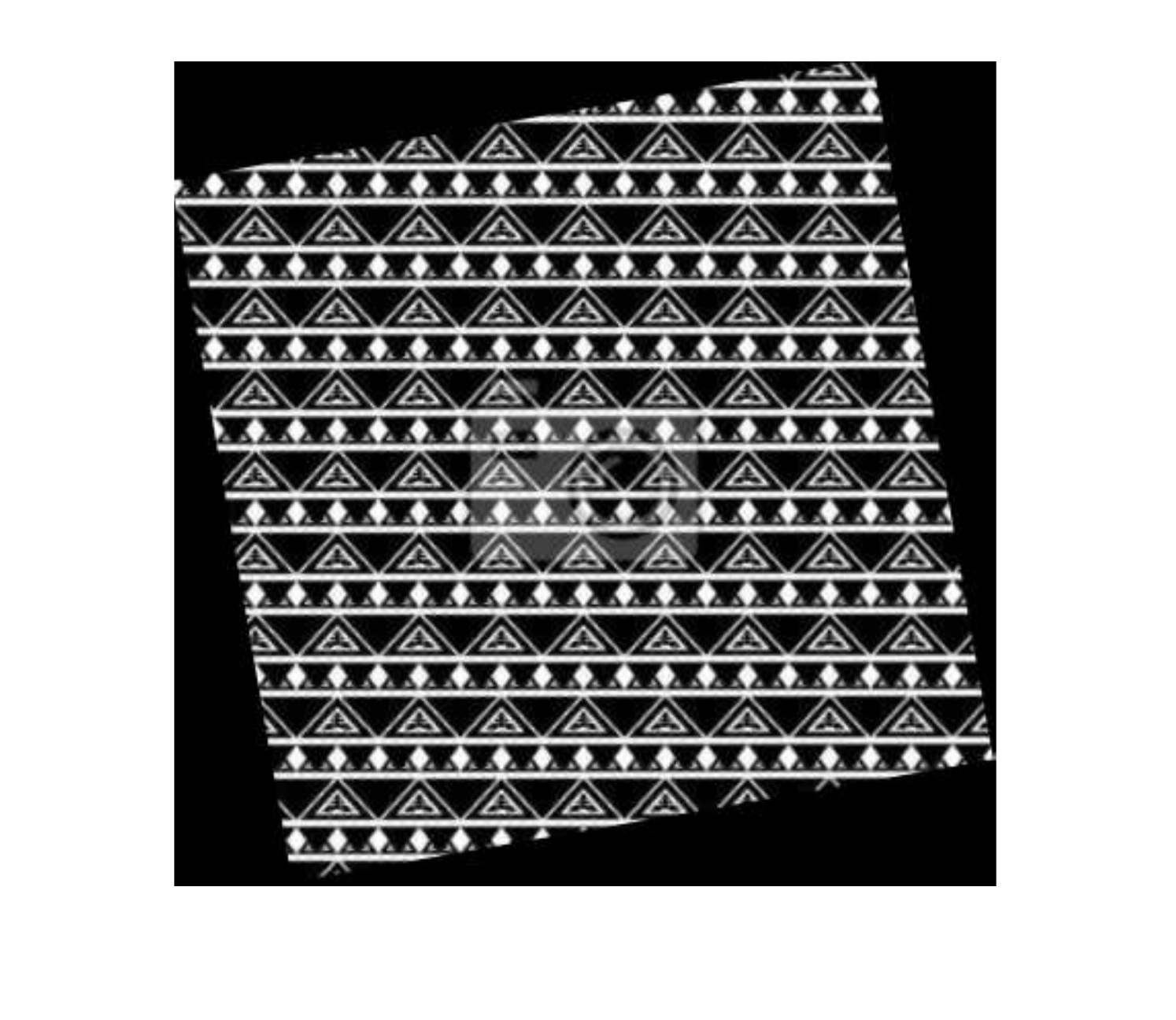}
  \vspace{-0.4cm}
  \includegraphics[scale=0.14]{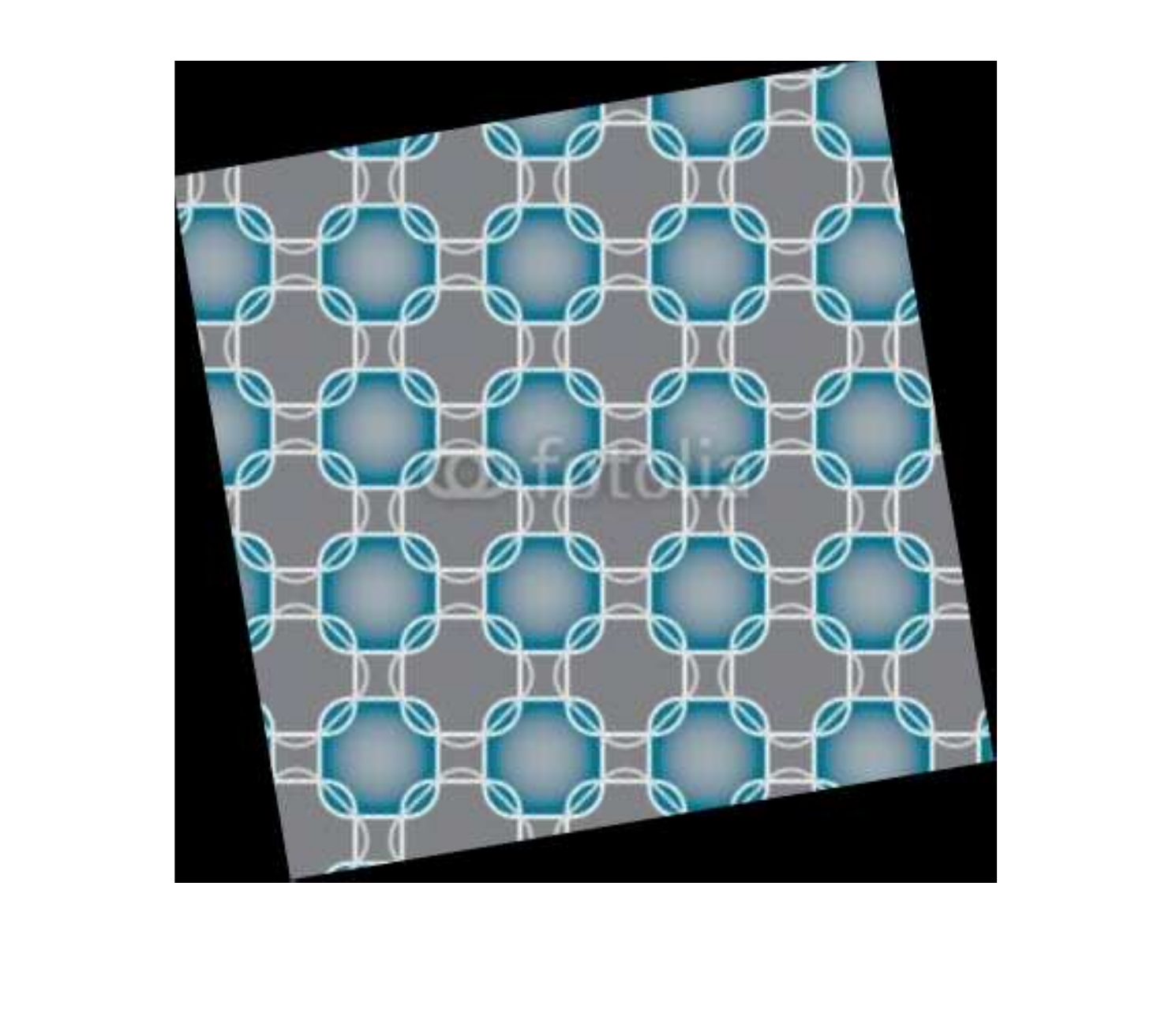}\hspace{-0.4cm}
  \includegraphics[scale=0.14]{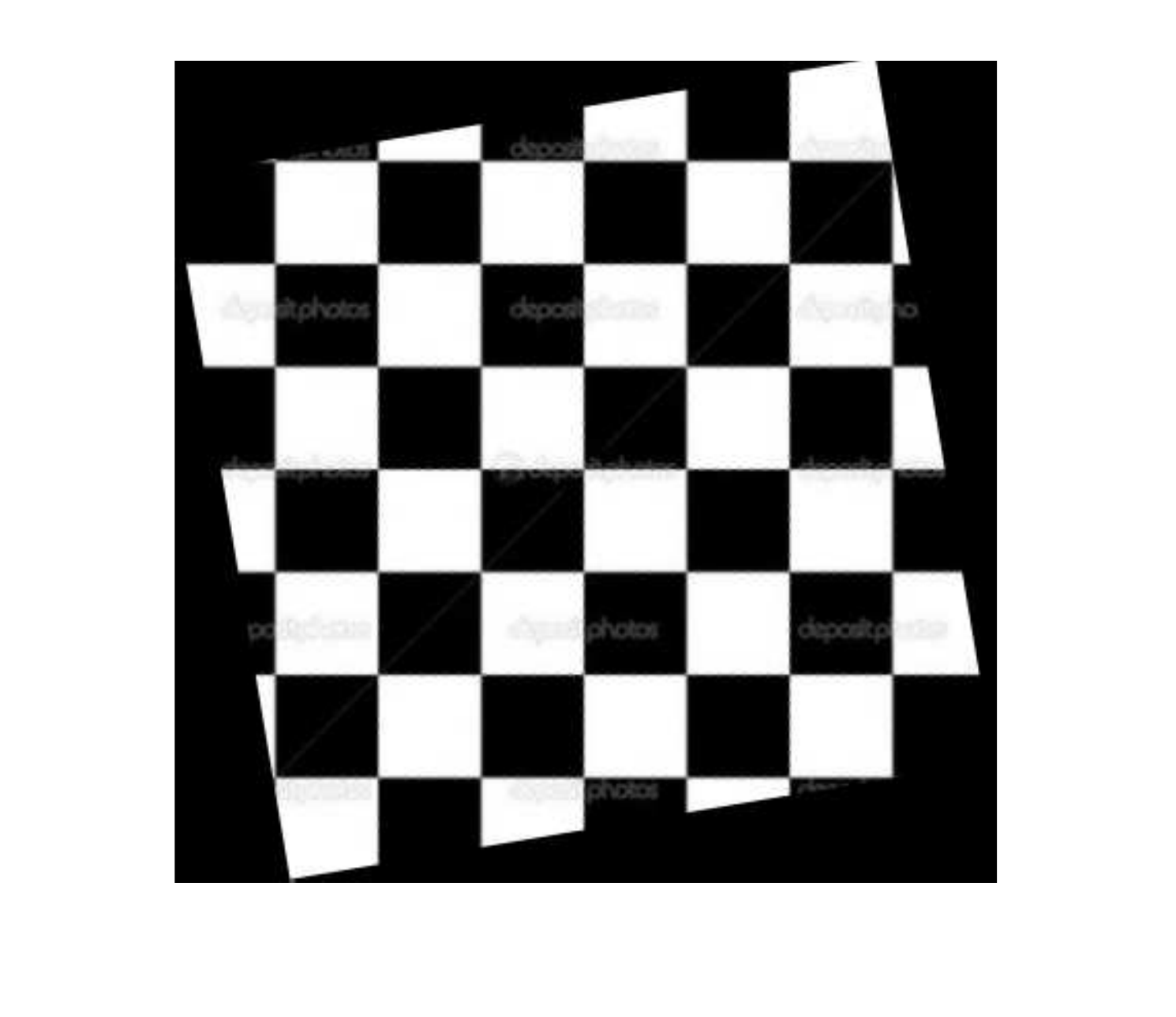}\hspace{-0.4cm}
  \includegraphics[scale=0.18]{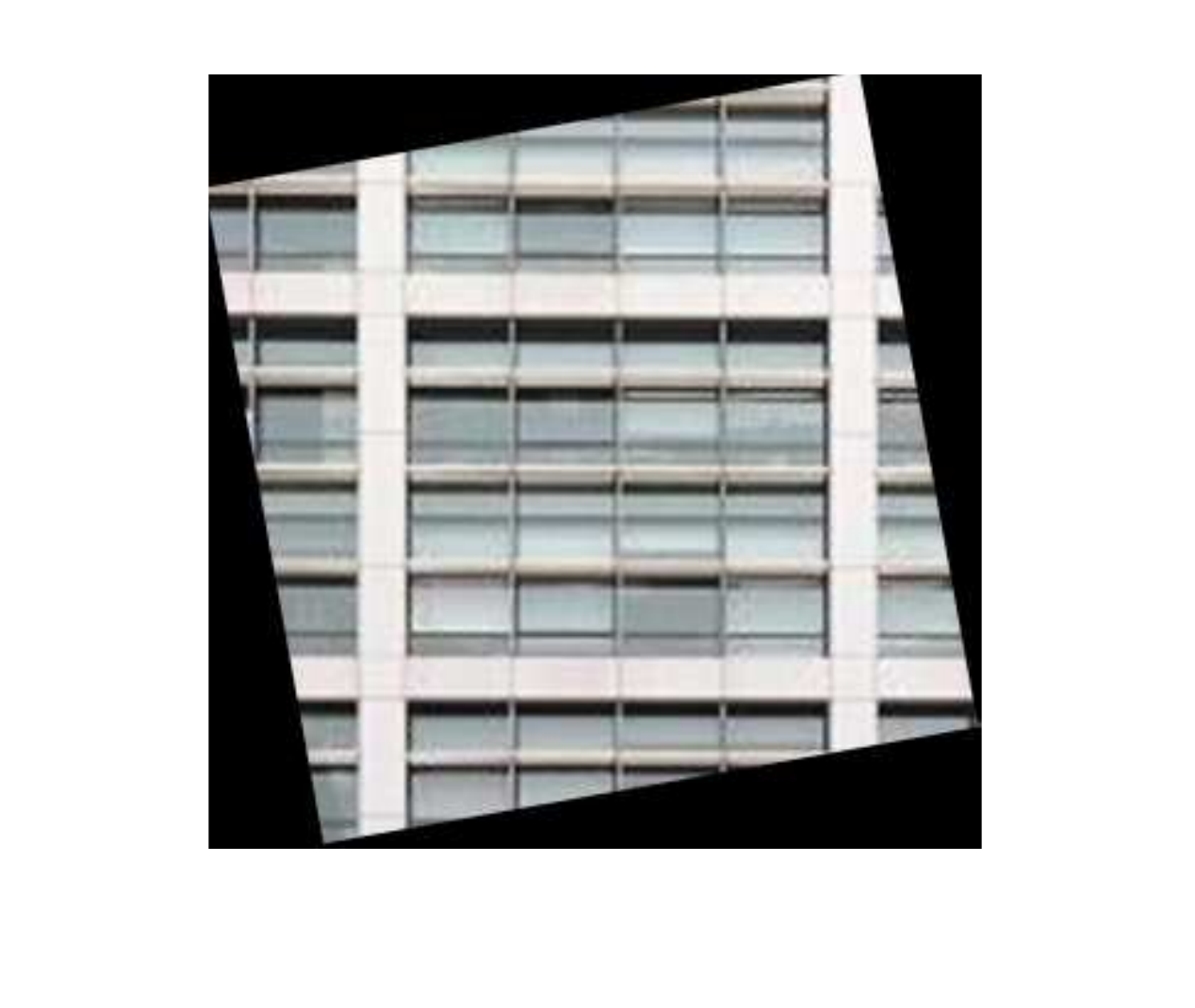}\hspace{-0.4cm}
  \includegraphics[scale=0.14]{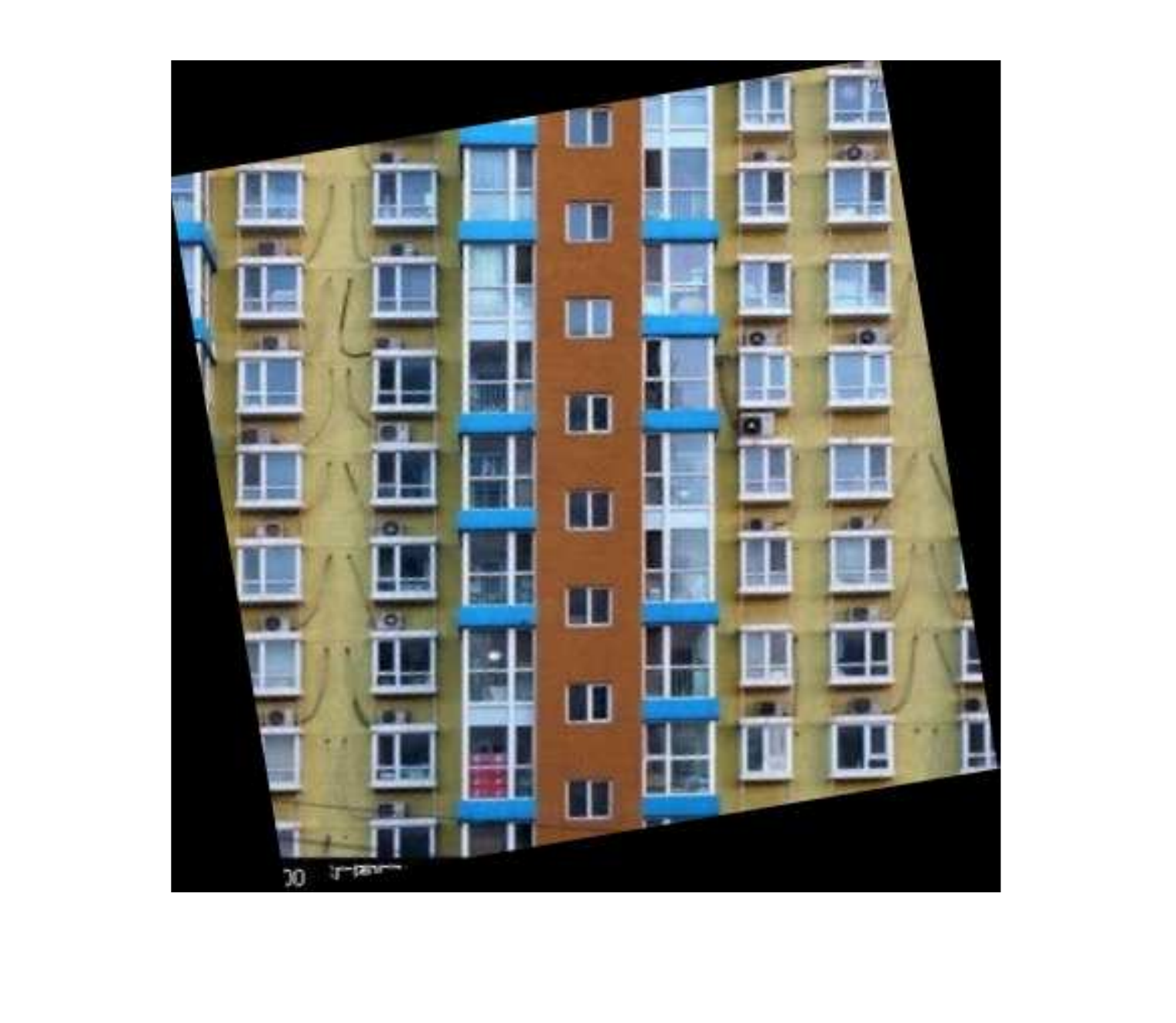}
\caption{{\scriptsize Representative Results of sGS-ADMM. First two rows: original regular low-rank patterns and textures; Middle two rows: rotated images; Bottom two rows: rectified image by sGS-ADMM.}}
\label{fig2}
\end{figure}

\begin{table*}[ht]
{\scriptsize \centering \caption{Comparison results of TILT with sGS-ADMM and sGS-ADMM\_G.}
\begin{tabular}{|c|c||c|c|c|c|c|c|c|c|c|c|c|c|c|c|c|}
\hline \multicolumn{2}{|c||}{}  & \multicolumn{5}{c|}{TILT} &\multicolumn{5}{c|}{sGS-ADMM} & \multicolumn{5}{c|}{sGS-ADMM\_G}  \\
\hline
No.& Outer & Iter  & Time & Rank & $\|E\|_1$  & Tol & Iter  & Time & Rank & $\|E\|_1$  & Tol  & Iter  & Time & Rank& $\|E\|_1$  & Tol  \\
\hline
1    &1  &  628&  1.84&    14& 1.06e+00& 9.99e-04&   389&  1.03&    14& 1.06e+00& 9.96e-04&  350&  0.95&    14& 1.06e+00& 9.96e-04\\
     &2  &  773&  2.34&    13& 5.56e-01& 9.99e-04&   479&  1.20&    13& 5.56e-01& 9.97e-04&  430&  1.08&    13& 5.56e-01& 9.99e-04\\
     &3  &  799&  2.06&    13& 5.38e-01& 1.00e-03&   495&  1.30&    13& 5.39e-01& 9.98e-04&  445&  1.16&    13& 5.38e-01& 9.99e-04\\
     &4  &  813&  2.22&    13& 5.16e-01& 9.99e-04&   503&  1.34&    13& 5.16e-01& 9.99e-04&  452&  1.14&    13& 5.16e-01& 1.00e-03\\
     &5  &  801&  2.16&    13& 5.20e-01& 9.96e-04&   496&  1.33&    13& 5.20e-01& 9.93e-04&  446&  1.16&    13& 5.20e-01& 9.96e-04\\
\hline
2    &1  &  800&  2.05&    13& 7.42e-01& 1.00e-03&   495&  1.30&    13& 7.42e-01& 9.99e-04& 445&  1.17&    13& 7.42e-01& 9.99e-04\\
     &2  &  930&  2.45&    12& 4.16e-01& 1.00e-03&   576&  1.45&    12& 4.16e-01& 9.99e-04& 518&  1.27&    12& 4.15e-01& 9.99e-04\\
     &3  &  912&  2.45&    11& 4.04e-01& 9.98e-04&   564&  1.39&    11& 4.04e-01& 9.97e-04& 506&  1.25&    11& 4.04e-01& 9.99e-04\\
     &4  &  931&  2.53&    11& 4.04e-01& 1.00e-03&   576&  1.48&    11& 4.04e-01& 1.00e-03& 519&  1.31&    11& 4.04e-01& 9.99e-04\\
     &5  &  923&  2.31&    11& 4.04e-01& 9.99e-04&   571&  1.45&    11& 4.04e-01& 9.99e-04& 514&  1.30&    11& 4.03e-01& 9.99e-04\\
\hline
3    &1  &   987&  2.45&    13& 1.87e-01& 9.99e-04&   610&  1.58&    13& 1.87e-01& 1.00e-03& 549&  1.45&    13& 1.87e-01& 9.98e-04\\
     &2  &  1365&  3.25&     8& 7.39e-02& 9.99e-04&   844&  2.16&     8& 7.39e-02& 9.99e-04& 760&  1.97&     8& 7.38e-02& 9.99e-04\\
     &3  &  1393&  3.42&     8& 6.09e-02& 9.98e-04&   861&  2.08&     8& 6.08e-02& 1.00e-03& 774&  1.97&     8& 6.09e-02& 9.96e-04\\
     &4  &  1381&  3.41&     8& 6.11e-02& 9.97e-04&   853&  2.09&     8& 6.11e-02& 9.99e-04& 764&  1.94&     8& 6.12e-02& 9.99e-04\\
\hline
4    &1  &  573&  1.50&    15& 9.09e-01& 9.94e-04&    355&  0.92&    15& 9.09e-01& 9.93e-04&  319&  0.88&    15& 9.09e-01& 9.95e-04\\
     &2  &  712&  1.75&    13& 6.20e-01& 9.99e-04&    440&  1.14&    13& 6.20e-01& 1.00e-03&  396&  1.00&    13& 6.20e-01& 9.96e-04\\
     &3  &  694&  1.75&    13& 5.89e-01& 9.99e-04&    429&  1.08&    13& 5.89e-01& 9.99e-04&  386&  1.09&    13& 5.89e-01& 9.98e-04\\
     &4  &  732&  1.81&    13& 5.99e-01& 9.99e-04&    453&  1.22&    13& 5.99e-01& 9.98e-04&  407&  1.09&    13& 5.99e-01& 1.00e-03\\
     &5  &  707&  1.72&    13& 5.96e-01& 9.99e-04&    437&  1.13&    13& 5.96e-01& 1.00e-03&  394&  1.13&    13& 5.96e-01& 9.90e-04\\
     &6  &  708&  1.78&    13& 5.97e-01& 1.00e-03&    438&  1.14&    13& 5.97e-01& 9.99e-04&  394&  1.03&    13& 5.97e-01& 9.99e-04\\
\hline
5    &1  &  857&  2.17&    13& 4.35e-01& 9.99e-04&   530&  1.36&    13& 4.35e-01& 9.99e-04&   477&  1.19&    13& 4.35e-01& 9.99e-04 \\
     &2  &  958&  2.39&    12& 3.75e-01& 9.91e-04&   592&  1.47&    12& 3.75e-01& 9.94e-04&   533&  1.28&    12& 3.75e-01& 9.91e-04 \\
     &3  &  984&  2.41&    11& 3.67e-01& 9.99e-04&   609&  1.50&    11& 3.67e-01& 9.99e-04&   548&  1.53&    11& 3.67e-01& 9.99e-04 \\
     &4  &  972&  2.38&    12& 3.65e-01& 9.99e-04&   602&  1.50&    12& 3.64e-01& 9.99e-04&   542&  1.50&    12& 3.62e-01& 1.00e-03 \\
     &5  &     &      &      &         &         &   603&  1.50&    12& 3.67e-01& 9.91e-04&   542&  1.47&    12& 3.66e-01& 9.92e-04 \\
\hline
6    &1  &  714&  2.30&    12& 3.98e-01& 9.97e-04 &   442&  1.14&    12& 3.99e-01& 9.94e-04& 397&  1.16&    12& 3.99e-01& 9.98e-04\\
     &2  &  816&  2.14&    10& 1.91e-01& 9.98e-04 &   505&  1.22&    10& 1.91e-01& 9.99e-04& 455&  1.28&    10& 1.91e-01& 9.96e-04\\
     &3  &  847&  2.11&     9& 1.91e-01& 1.00e-03 &   524&  1.25&     9& 1.91e-01& 1.00e-03& 472&  1.27&     9& 1.91e-01& 1.00e-03\\
     &4  &  841&  2.05&     9& 1.91e-01& 9.99e-04 &   521&  1.28&     9& 1.91e-01& 9.99e-04& 468&  1.25&     9& 1.91e-01& 1.00e-03\\
\hline
7    &1  &  918&  2.31&    12& 4.33e-01& 9.99e-04&   568&  1.53&    12& 4.33e-01& 9.99e-04 & 511&  1.31&    12& 4.33e-01& 1.00e-03 \\
     &2  &  954&  2.34&    13& 3.31e-01& 9.96e-04&   590&  1.50&    13& 3.31e-01& 1.00e-03 & 530&  1.34&    13& 3.31e-01& 9.99e-04 \\
     &3  &  990&  2.45&    12& 3.13e-01& 9.99e-04&   612&  1.53&    12& 3.13e-01& 9.99e-04 & 551&  1.41&    12& 3.13e-01& 9.98e-04 \\
     &4  &  992&  2.45&    12& 3.15e-01& 9.99e-04&   614&  1.56&    12& 3.15e-01& 9.99e-04 & 553&  1.38&    12& 3.14e-01& 9.99e-04 \\
\hline
8    &1  &  630&  1.80&    15& 7.03e-01& 9.97e-04&   390&  1.05&    15& 7.03e-01& 9.98e-04  &  351&  0.94&    15& 7.03e-01& 9.98e-04 \\
     &2  &  602&  1.66&    16& 6.34e-01& 9.97e-04&   373&  0.97&    16& 6.34e-01& 9.96e-04  &  336&  0.84&    16& 6.34e-01& 9.99e-04 \\
     &3  &  614&  1.59&    16& 6.27e-01& 9.97e-04&   380&  1.00&    16& 6.27e-01& 9.97e-04  &  342&  0.88&    16& 6.28e-01& 9.96e-04 \\
     &4  &  621&  1.56&    16& 6.25e-01& 9.96e-04&   384&  1.00&    16& 6.25e-01& 9.97e-04  &  346&  0.88&    16& 6.25e-01& 9.96e-04 \\ \hline
\end{tabular}\label{tab1}
}
\end{table*}

AS can be seen from Table \ref{tab1} that, all the algorithms obtained the same final rank and the comparable KKT residuals for all test cases. From these results, we also see that sGS-ADMM and sGS-ADMM\_G are the most competitive while TILT is the slowest one. We also observe that the total number of internal iterations of both sGS-ADMM and sGS-ADMM\_G is much smaller than TILT. This is because sGS-ADMM and sGS-ADMM\_G sweep the variable $\Delta\tau$ twice during each iteration which in turn ensures the convergence of the iterative process.  We also test a series of other images contained with different decompositions and we observed the consistent results. These results and observations clearly demonstrated the efficiency and stability of sGS-ADMM and sGS-ADMM\_G.

\section{Conclusions}\label{last}
Transform Invariant Low-Rank Textures targets to extract both textural and geometric information defining regions of low-rank planar patterns from 2D scene. This task can
be characterized as a sequence of matrix nuclear-norm and $\ell_1$-norm involvednon-smooth convex minimization problems. The extended directly ADMM implemented by Zhang {\itshape et al.} \cite{TILT} often performs numerically well, but its theoretical convergence is not guaranteed. In this paper, we employed the sGS-ADMM developed by Li, Sun \& Toh \cite{XDLIMP} for solving the convex non-smooth model (\ref{modelthree}) and used the sGS decomposition technique in \cite{SGSTH} to extend the generalized ADMM to solve (\ref{modelthree}).  The distinct feature of the sGS-ADMM is that it needs to  update the variable $\Delta\tau$ again, but it greatly reduces the total number of iterations and computing time. The reason for the improved performance is the using the novel sGS decomposition theorem in \cite{SGSTH} to establish the equivalence between the internal iterative scheme and the classical ADMM with an addition of a particular proximal term. With the encouraging numerical performance of sGS-ADMM, it is worthwhile to investigate other techniques, such as the rank-correction technique \cite{MPS}, to further improve the solution accuracy. This is an interesting topic of future research.

\section*{Acknowledgements}

We would like to thank the anonymous referees and the associate editor for their useful comments and suggestions
which improved this paper greatly. We are very grateful to Dr. Liang Chen at Hunan University for helpful discussions on the optimality conditions and stopping criterion
used in the algorithms' implementations.

\end{document}